\numberwithin{equation}{section}
\subjclass[2010]{46E35, 35J92, 35P30}
\keywords{Sobolev embeddings, $p-$Laplacian, Lane-Emden equation, inradius, distance function, capacity.}
\date{\today}
\newtheorem{thm}{Theorem}[section]
\newtheorem{cor}[thm]{Corollary}
\newtheorem{prop}[thm]{Proposition}
\newtheorem{lemma}[thm]{Lemma}
\theoremstyle{definition}
\newtheorem{defn}[thm]{Definition}
\newtheorem{ex}[thm]{Example}
\newtheorem{rem}[thm]{Remark}
\newtheorem*{ack}{Acknowledgments}
\title{Sobolev embeddings and distance functions}
\author[Brasco]{Lorenzo Brasco}
\address[L.\ Brasco]{Dipartimento di Matematica e Informatica
	\newline\indent
	Universit\`a degli Studi di Ferrara
	\newline\indent
	Via Machiavelli 35, 44121 Ferrara, Italy}
\email{lorenzo.brasco@unife.it}
\author[Prinari]{Francesca Prinari}
\address[F. Prinari]{Dipartimento di Scienze Agrarie, Alimentari e Agro-ambientali
\newline\indent 
Universit\`a di Pisa
\newline\indent
Via del Borghetto 80, 56124 Pisa, Italy}
\email{francesca.prinari@unipi.it}
\author[Zagati]{Anna Chiara Zagati}
\address[A.\,C.\ Zagati]{Dipartimento di Scienze Matematiche, Fisiche e Informatiche
	\newline\indent
	Universit\`a di Parma
	\newline\indent
	Parco Area delle Scienze 53/a, Campus, 43124 Parma, Italy}
	\email{annachiara.zagati@unipr.it}
\begin{document}

\begin{abstract}
On a general open set of the euclidean space, we study the relation between the embedding of the homogeneous Sobolev space $\mathcal{D}^{1,p}_0$ into $L^q$ and the summability properties of the distance function. We prove that in the superconformal case (i.e. when $p$ is larger than the dimension) these two facts are equivalent, while in the subconformal and conformal cases (i.e. when $p$ is less than or equal to the dimension) we construct counterexamples to this equivalence.
In turn, our analysis permits to study the asymptotic behaviour of the positive solution of the Lane-Emden equation for the $p-$Laplacian with sub-homogeneous right-hand side, as the exponent $p$ diverges to $\infty$. The case of first eigenfunctions of the $p-$Laplacian is included, as well. As particular cases of our analysis, we retrieve some well-known convergence results, under optimal assumptions on the open sets. We also give some new geometric estimates for generalized principal frequencies.
\end{abstract}

	\maketitle
	
	\begin{center}
		\begin{minipage}{11cm}
			\small
			\tableofcontents
		\end{minipage}
	\end{center}
	
	\section{Introduction}
	
	\subsection{Homogeneous Sobolev spaces}
	Let $1<p<\infty$ and let $\Omega\subseteq\mathbb{R}^N$ be an open set. We indicate by $\mathcal{D}^{1,p}_0(\Omega)$ the {\it homogeneous Sobolev space} defined by the completion of $C^\infty_0(\Omega)$, with respect to the norm
	\[
	\psi\mapsto \|\nabla \psi\|_{L^p(\Omega)},\qquad \mbox{ for every } \psi\in C^\infty_0(\Omega).
	\]
Our primary goal is to deepen the study of conditions on $\Omega$ assuring the validity of the continuous embedding
\begin{equation}
\label{embedding}
\mathcal{D}^{1,p}_0(\Omega)\hookrightarrow L^q(\Omega),
\end{equation}
in the range $1\le q\le p$. Equivalently, if we introduce the {\it generalized principal frequencies}
	\begin{equation}\label{lapq}
		\lambda_{p,q}(\Omega) := \inf_{\psi \in C^{\infty}_0(\Omega)} \left\{\int_{\Omega} |\nabla \psi|^p \, dx\, :\,\|\psi\|_{L^q(\Omega)}=1\right\},
	\end{equation}
	we seek for necessary and sufficient conditions on $\Omega$ assuring that $\lambda_{p,q}(\Omega)>0$. Indeed, $\lambda_{p,q}(\Omega)$ is nothing but the sharp constant in the following Poincar\'e-type inequality
\[
c\,\left(\int_\Omega |\psi|^q\,dx\right)^\frac{p}{q}\le \int_\Omega |\nabla \psi|^p\,dx,\qquad \mbox{ for every } \psi\in C^\infty_0(\Omega).
\]
This is a classical subject, we refer to \cite[Chapter 15]{Maz} for a thorough treatment of the problem. In particular, before proceeding further, it is important to recall some important facts. 
\par
The first one is the following remarkable equivalence
	\[
	\mathcal{D}^{1,p}_0(\Omega)\hookrightarrow L^q(\Omega) \mbox{ is continuous }\qquad \iff \qquad \mathcal{D}^{1,p}_0(\Omega)\hookrightarrow L^q(\Omega) \mbox{ is compact},
	\]
	which holds in the {\it sub-homogeneous case} $1\le q<p<\infty$. We refer to  \cite[Theorem 15.6.2]{Maz} for this result (see also \cite[Theorem 1.2]{BR}, for a different proof). Such an equivalence ceases to be true at the threshold $q=p$, as shown by the simple example of
	\[
	\Omega=\mathbb{R}^{N-1}\times(-1,1).
	\]
A second important fact is that, for the {\it super-homogeneous case} $p<q$, we have
\[
\mathcal{D}^{1,p}_0(\Omega)\hookrightarrow L^p(\Omega) \mbox{ is continuous }\qquad \iff \qquad \mathcal{D}^{1,p}_0(\Omega)\hookrightarrow L^q(\Omega) \mbox{ is continuous},
\]
and
\[
\mathcal{D}^{1,p}_0(\Omega)\hookrightarrow L^p(\Omega) \mbox{ is compact }\qquad \iff \qquad \mathcal{D}^{1,p}_0(\Omega)\hookrightarrow L^q(\Omega) \mbox{ is compact}.
\]
More precisely, here $q$ is such that
\[
p<q\left\{\begin{array}{ll}
<\dfrac{N\,p}{N-p},& \mbox{ if } 1<p<N,\\
<\infty, & \mbox{ if } p=N,\\
\le \infty,& \mbox{ if } p\ge N.
\end{array}
\right.
\]
We refer to \cite[Theorem 15.4.1]{Maz} and \cite[Theorem 15.6.1]{Maz} for these equivalences. This second fact explains why we will essentially limit ourselves to treat the case $q\le p$ and only briefly discuss the case $q>p$.
\vskip.2cm\noindent
In particular, in this paper we want to discuss the link between the continuous (and compact) embedding \eqref{embedding} and the summability of the distance function
	\[ 
	d_{\Omega}(x):=\min_{y \in \partial\Omega} |x-y|, \qquad \mbox{ for every } x \in \Omega.
	\]
At this aim, it is useful to recall that in \cite{BR} (see also \cite{BB}), a similar study has been done, with the so-called {\it $p-$torsion function} of $\Omega$ in place of $d_\Omega$. The former, denoted by $w^{\Omega}_{p,1}$, is formally defined as the positive solution of 
	\[ 
	-\Delta_p u = 1, \ \mbox{ in } \Omega, \qquad u=0,\ \mbox{ on }\partial\Omega,
	\]
	where $\Delta_p u=\mathrm{div}(|\nabla u|^{p-2} \nabla u)$ is the $p-$Laplace operator, see \cite[Definitions 2.1 and 2.2]{BR} for the precise definition.
Then, for the case $q<p$,	 in \cite[Theorem 1.2]{BR} the first author and Ruffini proved that 
\[
\mathcal{D}^{1,p}_0(\Omega)\hookrightarrow L^q(\Omega) \mbox{ is continuous }\qquad \iff \qquad w^\Omega_{p,1} \in L^{\frac{p-1}{p-q}\,q}(\Omega).
\]
As for the limit case $q=p$, by \cite[Theorem 1.3]{BR} we have
\[
\mathcal{D}^{1,p}_0(\Omega)\hookrightarrow L^p(\Omega) \mbox{ is continuous }\qquad \iff \qquad w^\Omega_{p,1} \in L^\infty(\Omega),
\]
and
\[
\mathcal{D}^{1,p}_0(\Omega)\hookrightarrow L^p(\Omega) \mbox{ is compact }\qquad \iff \qquad \lim_{R\to+\infty}\|w^\Omega_{p,1}\|_{L^\infty(\mathbb{R}^N\setminus B_R)}=0,
\]
where $B_R$ is the $N-$dimensional ball of center $0$ and radius $R>0$.
\par
These characterizations are quite useful: we give an example in Appendix \ref{sec:A} of an open planar set with infinite volume, for which one can get the compactness of the embedding $\mathcal{D}^{1,2}_0(\Omega)\hookrightarrow L^2(\Omega)$ by appealing to the torsion function. Nevertheless, dealing with the $p-$torsion function is not always practical, thus the previous characterizations are a bit implicit. It would be desirable to have some characterizations which are more intrinsically geometric. 
\par
Roughly speaking, in this paper we will study to which extent we can replace $w^\Omega_{p,1}$ with $d_\Omega$ in the aforementioned results. 
For capacitary reasons, we will see that this is possible only in the {\it superconformal case} $p>N$. In the case $p\le N$, we will find that some summability conditions on $d_\Omega$ are {\it necessary} for the embeddings to hold, but not sufficient.

	\subsection{Main results}
	
In this paper, we will show at first that for $1\le q<p$
\[
\mathcal{D}^{1,p}_0(\Omega)\hookrightarrow L^q(\Omega) \mbox{ is continuous } \qquad \Longrightarrow\qquad d_{\Omega} \in L^{\frac{p\,q}{p-q}}(\Omega).
\]
In the case $q=p$, we will prove that
\[
\mathcal{D}^{1,p}_0(\Omega)\hookrightarrow L^p(\Omega) \mbox{ is continuous } \qquad \Longrightarrow\qquad d_{\Omega} \in L^\infty(\Omega),
\]
and
\[
\mathcal{D}^{1,p}_0(\Omega)\hookrightarrow L^p(\Omega) \mbox{ is compact } \qquad \Longrightarrow\qquad \lim_{R\to +\infty} \|d_\Omega\|_{L^\infty(\mathbb{R}^N\setminus B_R)}=0.
\]
Moreover, when $p>N$ we will show that {\it all these implications actually become equivalences}. We will also construct suitable counterexamples to show that for $1<p\le N$, on the contrary, none of the converse implications hold true.
\par
We recall that the condition
\[
\lim_{R\to +\infty} \|d_\Omega\|_{L^\infty(\mathbb{R}^N\setminus B_R)}=0,
\]
is somehow classical in the theory of Sobolev spaces, when it holds $\Omega$ is said to be {\it quasibounded} (see for example \cite{AF, Cl}).
\begin{rem}[Comparison with previous results, $q<p$]
The fact that the continuity of the embedding implies the stated summability of $d_\Omega$ can be found in \cite[Theorem 3]{vBe}, at least for the case $p=2$. However, apart for the generalization to $p\not=2$, our proof is different and exploits a comparison principle for the sub-homogeneous Lane-Emden equation, that we recently proved in \cite{BPZ}. This in turn permits to obtain a geometric estimate on $\lambda_{p,q}$ of the type 
\[
\lambda_{p,q}(\Omega) \,\left(\displaystyle\int_{\Omega} d_{\Omega}^{\frac{p\,q}{p-q}} \, dx \right)^{\frac{p-q}{q}}\le C,
\]
with a constant $C$ which does not blow-up as $q\nearrow p$, differently from \cite{vBe} (see Remark \ref{rem:vanbasten} below).
\par
The converse implication exploits the same idea of \cite{vBe}, i.e. it is based on Hardy's inequality
\[
C_\Omega\,\int_\Omega \frac{|u|^p}{d_\Omega^p}\,dx\le \int_\Omega |\nabla u|^p\,dx,\qquad \mbox{ for every } u\in C^\infty_0(\Omega).
\]
There is however a difference here: while the result in \cite{vBe} is proved conditionally on the validity of such an inequality, we use here the important fact that such an inequality holds {\it for every} open set $\Omega\subsetneq \mathbb{R}^N$, provided $p>N$. This is a major result proved independently by Lewis \cite{Lewis} and Wannebo \cite{Wa}. 
\end{rem} 
\begin{rem}[Comparison with previous results, $q=p$]
Here our embedding results were already known and due to Adams. However, to prove that on a quasibounded open set the embedding $\mathcal{D}^{1,p}_0(\Omega)\hookrightarrow L^p(\Omega)$ is compact for $p>N$, we use a different argument. As before, the crucial ingredient is the Hardy inequality recalled above, which permits to give a simpler proof. We believe this fact to be of independent interest.
\end{rem}
With these embedding results at hand, we then proceed to study the asymptotic behaviour of both $\lambda_{p,q}(\Omega)$ and their relevant extremals (provided they exist), in the limit as $p$ goes to $\infty$. Here we are going to unify and extend previous results, scattered in the literature. For example, for the generalized principal frequencies, we will prove that 
\[
\lim_{p\to\infty}\Big( \lambda_{p,q}(\Omega) \Big)^\frac{1}{p}=\frac{1}{\|d_{\Omega}\|_{L^q(\Omega)}}\qquad \mbox{ and }\qquad \lim_{p\to\infty}\Big( \lambda_p(\Omega) \Big)^\frac{1}{p} =\frac{1}{r_{\Omega}},
\]	
for {\it every} open set $\Omega\subsetneq\mathbb{R}^N$. Here we used the simplified notation $\lambda_p(\Omega)=\lambda_{p,p}(\Omega)$. The quantity $r_\Omega$ is the {\it inradius} of $\Omega$, which coincides with the supremum of the distance function.
\par
As for the relevant extremals, we will prove at first that under the assumption $d_\Omega\in L^q(\Omega)$ for some $1\le q<\infty$ and $\Omega$ connected, we have 
\[
\lim_{p\to\infty} \|w^\Omega_{p,q}-d_\Omega\|_{L^q(\Omega)}=0,
\]
where for $q<p$ the function $w^\Omega_{p,q}$ is the unique positive solution of the {\it Lane-Emden equation}
\[
-\Delta_p u=u^{q-1},\qquad \mbox{ in }\Omega,
\]
with Dirichlet homogeneous boundary conditions.
By observing that (see equation \eqref{pqnorm} below)
\[
\|w^\Omega_{p,q}\|_{L^q(\Omega)}=\Big(\lambda_{p,q}(\Omega)\Big)^\frac{1}{q-p},
\]
we get that 
\[
\mathtt{w}_{p,q}^\Omega:=\Big(\lambda_{p,q}(\Omega)\Big)^\frac{1}{p-q}\,w^\Omega_{p,q},
\]
is the unique positive extremal in $\mathcal{D}^{1,p}_0(\Omega)$ for $\lambda_{p,q}(\Omega)$ and we have 
\[
\lim_{p\to\infty} \left\|\mathtt{w}^\Omega_{p,q}-\frac{d_\Omega}{\|d_\Omega\|_{L^q(\Omega)}}\right\|_{L^q(\Omega)}=0.
\]
We remark that this convergence is obtained under the optimal assumption $d_\Omega\in L^q(\Omega)$, thus $\Omega$ is not supposed either bounded or with finite volume. 
Moreover, by an interpolation argument, we can upgrade these convergences and infer convergence in $L^r(\Omega)$ and $C^{0,\beta}(\overline\Omega)$, for every $q<r\le \infty$ and  $0<\beta<1$.
\par
In the case of $\lambda_p(\Omega)$, extremals in $\mathcal{D}^{1,p}_0(\Omega)$ are usually called {\it first eigenfunctions of the $p-$Laplacian with Dirichlet conditions}. In this case, we will prove that if $\Omega$ is connected and quasibounded, then the family $\{u_p\}$ is precompact in $L^{\infty}(\Omega)$ and  every accumulation point $u_{\infty}$ is a solution of 
\[
\min_{u \in W^{1,\infty}(\Omega)} \Big\{\|\nabla u \|_{L^{\infty}(\Omega)} :\, \| u \|_{L^{\infty}(\Omega)}=1, \, u \equiv 0 \mbox{ on } \partial \Omega \Big\}=\frac{1} {r_{\Omega}}.
\]
Here each $u_p$ is the first positive eigenfunction on $\Omega$, normalized by the requirement to have unit $L^p(\Omega)$ norm.
\par
Finally, we will also consider  the variational problem associated to the endpoint Poincar\'e-Sobolev constant $\lambda_{p,\infty}(\Omega)$ for $p>N$,  recently studied in \cite{EP} and \cite{HL} when $\Omega$ is a bounded open set.  More precisely, we will  generalize   \cite[Theorems 2.1 and 2.3]{EP},  by showing  that for {\sl every} open set $\Omega\subsetneq\mathbb{R}^N$, it holds 
\[ 
\lim_{q \to \infty}  \lambda_{p,q}(\Omega) = \lambda_{p,\infty}(\Omega),
\] 
and  by proving that the variational problem defining $ \lambda_{p,\infty}(\Omega)$ has a minimizer under the sole assumption of quasiboundedness on $\Omega$.
Moreover, inspired by \cite[Theorem 3.3]{EP}, we will also study the asymptotic behaviour of both $\lambda_{p,\infty}$ and its extremals, as $p$ goes to $\infty$.
\par
At the level of generalized principal frequencies, we can summarize the previous convergence results with the following diagram, 
which holds for every $N < p < \infty$, every $1\le q\le p$ and every open set $\Omega\subsetneq \mathbb{R}^N$ (see  Corollary \ref{teo:limite}, Corollary \ref{teo:limiteq} and Corollary \ref{teo:asymp-p}).
\begin{figure}[h]
\centering
\begin{tikzcd}[row sep=huge, column sep=large]
\Big(\lambda_{p,q}(\Omega)\Big)^\frac{1}{p} \arrow[r, "q\to\infty"]  \arrow[d, "p\to\infty" ] \arrow[dr, "p=q\to\infty"]
& \Big(\lambda_{p,\infty}(\Omega)\Big)^\frac{1}{p} \arrow[d, "p\to\infty" ] \\
 \dfrac{1}{\|d_{\Omega}\|_{L^q(\Omega)}} 
&  \dfrac{1}{r_{\Omega}}
\end{tikzcd}
\end{figure}
\par
 The elements in the bottom line have to be considered as $0$, whenever $d_\Omega\not\in L^q(\Omega)$ or $r_\Omega=+\infty$. Moreover, when $\Omega$ is such that $d_\Omega\in L^{q_0}(\Omega)$ for some $q_0<\infty$, then we can close the diagram by 
\[
 \dfrac{1}{\|d_{\Omega}\|_{L^q(\Omega)}} \stackrel{q\to\infty}{\longrightarrow} \frac{1}{r_\Omega},
\]
thus making it commutative.
\begin{rem}[Comparison with previous results]
These convergence results are not a complete novelty, of course. However, our treatment extends, improves and generalizes some known results previously obtained by various authors in some particular cases (i.e. $q=1$ or $q=p$) and under more restrictive assumptions on $\Omega$. 
 The  asymptotics for $ \lambda_{p}(\Omega)$  generalize   \cite[Lemma 1.5]{JLM} and \cite[Theorem 3.1]{FIN}, shown  under the restrictive assumption  that $\Omega$ is  a bounded open set (see also \cite[Theorem 5.1]{CDPJ}). 
 \par
 The behaviour of  $\lambda_{p,1}(\Omega)$  is due to \cite[Proposition 2.1]{BdBM} and \cite[Theorem 1]{Ka}, in the case of an open bounded set, after getting  the uniform convergence of the relevant extremals $w_{p,1}^\Omega$ to $d_{\Omega}$. This result  is extended to sets  with finite volume  in \cite[Corollary A.4]{BBP}. As for the constant $\lambda_{p,\infty}$, the case of open bounded sets is considered in \cite{EP}.
 \end{rem} 
We point out that the extension of these convergence results to open sets which are unbounded or with infinite volume is not trivial: we crucially rely on the comparison principle for the Lane-Emden equation recalled above, as well as on the asymptotic behaviour as $p$ goes to $\infty$ of the following Morrey-type constant
\[ 
\mu_p(B_1):=\inf_{u\in W^{1,p}(B_1)}\left\{\int_{B_1} |\nabla u|^p\,dx\, :\, u(0)=1 \mbox{ and } u(z)=0 \right\}, \quad \mbox{ with } z \in \partial B_1. 
\]
The study of this constant is contained in Section \ref{sez4} below. The study of the asymptotics for $\mu_p(B_1)$ permits in turn to obtain a couple of collateral results, which are interesting in themselves: the behaviour for large $p$ of the two sharp Hardy constants
	\[
	\mathfrak{h}_{p}(\Omega)=\inf_{u\in C^\infty_0(\Omega)}\left\{\int_\Omega |\nabla u|^p\,dx\, :\, \left\|\frac{u}{d_\Omega}\right\|_{L^p(\Omega)}=1\right\},
	\]
and
	\[
	\mathfrak{h}_{p,\infty}(\Omega)=\inf_{u\in C^\infty_0(\Omega)}\left\{\int_\Omega |\nabla u|^p\,dx\, :\, \left\|\frac{u}{d_\Omega^{1-\frac{N}{p}}}\right\|_{L^\infty(\Omega)}=1\right\},
	\]
as well as the asymptotic behaviour of the sharp Morrey constant
\[
	\mathfrak{m}_{p}(\Omega):=\inf_{u\in C_0^{\infty}(\Omega)} \left\{\int_\Omega |\nabla u|^p\,dx\, :\, [u]_{C^{0,\alpha_p}(\overline\Omega)}=1 \right\},\qquad \mbox{ where }\alpha_p:=1-\frac{N}{p}.
\]
We observe that the latter is actually independent of the open set $\Omega$, i.e. it coincides with that of the whole space (see Corollary \ref{cor:morrey} below, for example). Some studies on such a constant and its extremals have been done recently by Hynd and Seuffert in a series of papers (see \cite{HS1, HS2} and \cite{HS3}), but the exact value of the sharp constant is still unknown. We show in Corollary \ref{cor:morrey} that 
\[
\lim_{p\to\infty} \Big(\mathfrak{m}_{p}(\Omega)\Big)^\frac{1}{p}=\lim_{p\to\infty} \Big(\mathfrak{m}_{p}(\mathbb{R}^N)\Big)^\frac{1}{p}=1.
\]	
	\subsection{Plan of the paper}
	In Section \ref{sez2} we present the functional analytic setting, give some preliminary embedding results and recall some important properties of the Lane-Emden equation. 
	\par
	Then in Section \ref{sez3} we discuss the role of summability assumptions on the distance function $d_{\Omega}$. This 
section also contains a generalized version of the classical Ascoli-Arzel\`a Theorem, for continuous functions on quasibounded open sets.	
\par
	In the subsequent Section \ref{sez4} we present one of the key ingredients of this work, i.e. an Hardy's inequality which holds for {\it every} open set $\Omega \subsetneq \mathbb{R}^N$ and $p>N$ (see Theorem \ref{teo:hardyineq}). In particular, we focus on the asymptotics of the relevant sharp constant, when $p$ goes to $\infty$. For this reason, we first need to study the asymptotics of a particular sharp Morrey-type constant (see Lemma \ref{lm:morreyA}).
	\par
Section \ref{sez5} contains the main embedding theorems for $\mathcal{D}^{1,p}_0(\Omega)$ and their relations with the summability of the distance function.
Finally, in Section \ref{Sec:asymp} we investigate the asymptotic behaviour for the generalized principal frequencies and the positive solutions of Lane-Emden equation when $p$ goes to $\infty$.

	\begin{ack}
	We thank Ryan Hynd and Erik Lindgren for some discussions on Hardy's inequality and the constant $\lambda_{p,\infty}$. This paper has been finalized during the meeting ``{\it PDEs in Cogne: a friendly meeting in the snow\,}'', held in Cogne in January 2023. 
We wish to thank the organizers 
for the kind invitation and the nice working atmosphere provided during the staying. 
\par
	 F.\,P. and A.\,C.\,Z. are members of the {\it Gruppo Nazionale per l'Analisi Matematica, la Probabilit\`a
e le loro Applicazioni} (GNAMPA) of the Istituto Nazionale di Alta Matematica (INdAM).
	\end{ack}

	\section{Preliminaries}\label{sez2}
	
	\subsection{Notation}
	For $x_0\in\mathbb{R}^N$ and $R>0$, we will denote by $B_R(x_0)$ the $N-$dimensional open ball with radius $R$, centered at $x_0$. In particular, when the center coincides with the origin, we will simply write $B_R$. For an open set $\Omega\subsetneq\mathbb{R}^N$,  
	we denote by $d_{\Omega}$ the distance function from the boundary $\partial\Omega$, defined by
	\[ 
	d_{\Omega}(x):=\min_{y \in \partial\Omega} |x-y|, \qquad \mbox{ for every } x \in \Omega.
	\]
	We extend $d_{\Omega}$ by $0$ in $\mathbb{R}^N\setminus \Omega.$ 
		We define the {\it inradius} $r_\Omega$ of $\Omega$ as the radius of a largest ball contained in $\Omega$. More precisely, this quantity is given by
	\begin{equation}
		\label{inradius}
		r_\Omega=\sup \Big\{r>0\, :\, \mbox{ there exists }x_0\in \Omega \mbox{ such that } B_r(x_0)\subset\Omega\Big\}.
	\end{equation}
	It is well-known that this coincides with the supremum over $\Omega$ of $d_\Omega$.
	\vskip.2cm \noindent 
	 The following simple result will be quite useful.
	\begin{lemma}
	\label{lm:utile?}
	Let $\Omega\subsetneq\mathbb{R}^N$ be an open set, 
	we denote by $C_0(\Omega)$ the completion of $C^\infty_0(\Omega)$ with respect to the sup norm. Then we get
	\[
	C_0(\Omega)\subset \Big\{u\in C_{\rm bound}(\overline\Omega)\, :\, u=0 \mbox{ on }\partial\Omega\Big\}.
	\]	
	Here $C_{\rm bound}(\overline\Omega)$ is the set of continuous and bounded functions on $\overline\Omega$.
	\end{lemma}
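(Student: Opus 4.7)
The plan is to exhibit, for each $u\in C_0(\Omega)$, a bounded continuous extension to $\overline\Omega$ that vanishes on $\partial\Omega$, by passing from the approximating sequence in $C^\infty_0(\Omega)$ to its zero extensions on $\mathbb{R}^N$ and showing that these converge uniformly on $\mathbb{R}^N$.

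More precisely, I would start from the definition of $C_0(\Omega)$: pick $u\in C_0(\Omega)$ and a sequence $\varphi_n\in C^\infty_0(\Omega)$ with $\|\varphi_n-u\|_{L^\infty(\Omega)}\to 0$. The key observation is that since each $\varphi_n$ has compact support contained in the open set $\Omega$, its zero extension $\widetilde\varphi_n$ outside $\Omega$ is not only well defined on $\mathbb{R}^N$ but is actually smooth on $\mathbb{R}^N$ (and in particular continuous and bounded on $\overline\Omega$). Because the sup norm is preserved by the zero extension, the sequence $\{\widetilde\varphi_n\}$ is uniformly Cauchy on $\mathbb{R}^N$, and hence converges uniformly to a function $U\in C_{\rm bound}(\mathbb{R}^N)$.

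Then I would verify the two required properties. First, $U|_\Omega=u$, because $\widetilde\varphi_n|_\Omega=\varphi_n\to u$ uniformly on $\Omega$; so $U$ restricted to $\overline\Omega$ is the sought continuous bounded extension of $u$. Second, for every $x\in\partial\Omega$ we have $\widetilde\varphi_n(x)=0$ for all $n$ (since $\mathrm{supp}(\varphi_n)\Subset\Omega$), and uniform convergence gives $U(x)=0$. This shows $u$ belongs to the right-hand side of the displayed inclusion.

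There is really no deep obstacle here: the only point that needs care is the justification that zero extension of a $C^\infty_0(\Omega)$ function is continuous on $\mathbb{R}^N$, which is immediate from compactness of the support inside the open set $\Omega$, and the preservation of the sup-norm Cauchy property under this extension. Everything else reduces to the standard fact that a uniform limit of continuous bounded functions is continuous and bounded.
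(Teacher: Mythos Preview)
Your argument is correct and essentially the same as the paper's: both proofs extend the Cauchy sequence of $C^\infty_0(\Omega)$ functions by zero (the paper works on $\overline\Omega$, you work on all of $\mathbb{R}^N$), invoke completeness of the space of bounded continuous functions to obtain a uniform limit, and then observe that this limit vanishes on $\partial\Omega$ since each approximant does. The only cosmetic difference is the choice of ambient space for the extension, which makes no substantive difference.
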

\begin{proof}
Let $\{u_n\}_{n\in\mathbb{N}}\subset C^\infty_0(\Omega)$ be a Cauchy sequence, with respect to the sup norm. In particular, it is Cauchy sequence in $C_{\rm bound}(\overline\Omega)$, which is a Banach space (see for example \cite[Theorem 7.9]{HeSt}). Thus, there exists $u\in C_{\rm bound}(\overline\Omega)$ such that $u_n$ converges to $u$ uniformly on $\overline\Omega$. Moreover, such a function must vanish at the boundary $\partial\Omega$, as a uniform limit of functions with compact support in $\Omega$.
\end{proof}
For every open set $\Omega\subseteq \mathbb{R}^N$, $0<\alpha \le 1$ and $u$ a continuous function on $\overline{\Omega}$, we set
	\[ 
	[u]_{C^{0,\alpha}(\overline{\Omega})}=\sup_{x\ne y;\, x,y\in\overline{\Omega}} \frac{|u(x)-u(y)|}{|x-y|^\alpha}.
		\]
We have the following interpolation--type estimate.
\begin{lemma}
\label{lm:interpol_holder}
Let $0<\beta<\alpha\le 1$, let $1\le \gamma\le \infty$ and let $\Omega\subsetneq\mathbb{R}^N$ be an open set. For every $u\in C_0(\Omega)\cap L^\gamma(\Omega)$ such that 
\[
[u]_{C^{0,\alpha}(\overline\Omega)}<+\infty,
\]
we have
\[
[u]_{C^{0,\beta}(\overline\Omega)}\le C_1\,\|u\|_{L^\gamma(\Omega)}^\theta\,[u]^{1-\theta}_{C^{0,\alpha}(\overline\Omega)},\qquad \mbox{ with } \theta=\frac{\alpha-\beta}{\alpha+\dfrac{N}{\gamma}},
\]
for some $C_1=C_1(N,\alpha,\beta,\gamma)>0$.
Moreover, if $1\le \gamma<\infty$ we also have
\[
\|u\|_{L^\infty(\Omega)}\le C_2\,\|u\|_{L^\gamma(\Omega)}^\chi\,[u]^{1-\chi}_{C^{0,\alpha}(\overline\Omega)},\qquad \mbox{ with } \chi=\frac{\alpha}{\alpha+\dfrac{N}{\gamma}},
\] 
for some $C_2=C_2(N,\alpha,\gamma)>0$. 
\end{lemma}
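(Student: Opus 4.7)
The plan is to prove the second estimate (the $L^\infty$ bound) first, and then deduce the first one (the H\"older interpolation) from it by a standard ``large/small distance'' splitting. The key preliminary observation is that every $u\in C_0(\Omega)$ vanishes on $\partial\Omega$ by Lemma \ref{lm:utile?}, so we may extend $u$ by $0$ to all of $\mathbb{R}^N$, producing a function $\widetilde u$ which is H\"older continuous on $\mathbb{R}^N$ with $[\widetilde u]_{C^{0,\alpha}(\mathbb{R}^N)}\le [u]_{C^{0,\alpha}(\overline\Omega)}$. Indeed, for $x\in\Omega$ and $y\in\mathbb{R}^N\setminus\Omega$, choosing $z\in\partial\Omega$ as the closest boundary point to $x$ gives $|\widetilde u(x)-\widetilde u(y)|=|u(x)|=|u(x)-u(z)|\le [u]_{C^{0,\alpha}(\overline\Omega)}|x-z|^\alpha\le [u]_{C^{0,\alpha}(\overline\Omega)}|x-y|^\alpha$. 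This extension lets us integrate over entire Euclidean balls rather than over their intersections with $\Omega$.

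For the second estimate, fix $x_0\in\Omega$ and set $M:=|u(x_0)|$; we may assume $[u]_{C^{0,\alpha}(\overline\Omega)}>0$, otherwise $u$ is constant (and hence identically $0$) and the inequality is trivial. For every $r>0$ and every $y\in B_r(x_0)$, the H\"older estimate for $\widetilde u$ yields
\[
|\widetilde u(y)|\ge M-[u]_{C^{0,\alpha}(\overline\Omega)}\,r^\alpha.
\]
Choosing $r=\bigl(M/(2\,[u]_{C^{0,\alpha}(\overline\Omega)})\bigr)^{1/\alpha}$ gives $|\widetilde u|\ge M/2$ on $B_r(x_0)$, hence
\[
\left(\frac{M}{2}\right)^{\gamma}\,\omega_N\,r^{N}\le \int_{B_r(x_0)}|\widetilde u|^\gamma\,dy\le \|u\|_{L^\gamma(\Omega)}^\gamma.
\]
Inserting the chosen $r$ and solving for $M$ produces $M^{\alpha\gamma+N}\le C\,\|u\|_{L^\gamma(\Omega)}^{\alpha\gamma}\,[u]_{C^{0,\alpha}(\overline\Omega)}^{N}$; since $x_0\in\Omega$ was arbitrary, taking the supremum and verifying that the exponents match $\chi=\alpha\gamma/(\alpha\gamma+N)$ and $1-\chi=N/(\alpha\gamma+N)$ delivers the second inequality with a constant $C_2=C_2(N,\alpha,\gamma)$.

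For the first estimate, fix $x,y\in\overline\Omega$ with $x\ne y$ and use the two bounds $|u(x)-u(y)|\le [u]_{C^{0,\alpha}(\overline\Omega)}|x-y|^\alpha$ and $|u(x)-u(y)|\le 2\,\|u\|_{L^\infty(\Omega)}$. Splitting according to whether $|x-y|\le r_*$ or $|x-y|>r_*$ and optimizing by the choice $r_*=\bigl(2\|u\|_{L^\infty(\Omega)}/[u]_{C^{0,\alpha}(\overline\Omega)}\bigr)^{1/\alpha}$ gives
\[
[u]_{C^{0,\beta}(\overline\Omega)}\le C\,\|u\|_{L^\infty(\Omega)}^{(\alpha-\beta)/\alpha}\,[u]_{C^{0,\alpha}(\overline\Omega)}^{\beta/\alpha}.
\]
For $\gamma=\infty$ this is already the claim with $\theta=(\alpha-\beta)/\alpha$. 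For $1\le\gamma<\infty$ one plugs into this estimate the $L^\infty$ bound just proved; a short exponent computation shows that the resulting power of $\|u\|_{L^\gamma(\Omega)}$ is exactly $\theta=(\alpha-\beta)/(\alpha+N/\gamma)$ while the power of $[u]_{C^{0,\alpha}(\overline\Omega)}$ collapses to $1-\theta$.

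The only genuine subtlety in the argument is the zero-extension step, which is what makes the hypothesis $u\in C_0(\Omega)$ (rather than merely $u\in C(\overline\Omega)\cap L^\gamma(\Omega)$) essential: without it, the ball $B_r(x_0)$ used in the $L^\infty$ bound could substantially overflow $\Omega$ and kill the lower bound on the integral. Everything else is a routine combination of H\"older's estimate with the ball-volume growth $|B_r|=\omega_N\,r^N$ and the standard two-bounds interpolation trick.
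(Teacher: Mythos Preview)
Your proof is correct. Both your approach and the paper's rely on the same two ingredients—the zero extension to $\mathbb{R}^N$ and an optimization over a ball radius—but organize them differently. The paper proves the $C^{0,\beta}$ bound directly, writing
\[
\frac{|u(x)-u(y)|}{|x-y|^\beta}=\left(\frac{|u(x)-u(y)|}{|x-y|^\alpha}\right)^{\beta/\alpha}|u(x)-u(y)|^{(\alpha-\beta)/\alpha},
\]
and then controlling each pointwise value $|u(x)|$ by the averaging inequality $|u(x)|\le R^\alpha[u]_{C^{0,\alpha}}+(\omega_N R^N)^{-1/\gamma}\|u\|_{L^\gamma}$, optimized in $R$; the $L^\infty$ bound is a by-product of this same step. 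You instead establish the $L^\infty$ bound first, via a Chebyshev-type lower bound on a well-chosen ball, and then deduce the H\"older estimate from the standard $(L^\infty,C^{0,\alpha})\to C^{0,\beta}$ interpolation through the large/small distance split. Your route is a bit more modular and makes the role of $\|u\|_{L^\infty}$ explicit as an intermediate step; the paper's is slightly more direct and immediately yields the explicit constants recorded in Remark~\ref{rem:constantvaries} (though yours could be tracked just as well). In substance the two arguments are equivalent.
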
	
\begin{proof}
By Lemma \ref{lm:utile?}, we can extend $u$ to a continuous function on the whole $\mathbb{R}^N$, by setting it to be $0$ on $\mathbb{R}^N\setminus\overline\Omega$. We first observe that for such an extension it holds
	\begin{equation}\label{eq:normeHold} 
			[u]_{C^{0,\alpha}(\overline\Omega)}= [u]_{C^{0,\alpha}(\mathbb{R}^N)}.
		\end{equation}
		Indeed, we may write
		\[ 
		\begin{split}
			[u]_{C^{0,\alpha}(\mathbb{R}^N)}&=\max\left\{ [u]_{C^{0,\alpha}(\overline\Omega)}, \sup_{ x\in \overline{\Omega},y\notin\overline{\Omega}} \frac{|u(x)|}{|x-y|^{\alpha}} \right\}\\
			&= \max\left\{ [u]_{C^{0,\alpha}(\overline\Omega)}, \sup_{ x\in \Omega ,y\notin\overline{\Omega} } \frac{|u(x)|}{|x-y|^{\alpha}} \right\}.
		\end{split}
		\]
		If $x\in\Omega$ and $y\notin\overline{\Omega}$ 
		then the segment $\overline{x\,y}$ connecting $x$ and $y$ is such that $\overline{x\,y}\cap \Omega\ne\emptyset$ and $\overline{x\,y}\cap(\mathbb{R}^N\setminus \overline \Omega)\ne\emptyset.$
		Hence, there exists $y_0\in \overline{x\,y}\cap \partial \Omega$
		satisfying
		\[ 
		|x-y|\ge |x-y_0|. 
		\]
		This implies 
		\[ \frac{|u(x)|}{|x-y|^{\alpha}}\le \frac{|u(x)-u(y_0)|}{|x-y_0|^{\alpha}}\le [u]_{C^{0,\alpha}(\overline\Omega)},\]
		that gives 
		\[ 
		\sup_{x\in \Omega,y\notin\overline{\Omega}} \frac{|u(x)|}{|x-y|^{\alpha}} \le [u]_{C^{0,\alpha}(\overline\Omega)}.
		\]  
		This  concludes the proof of \eqref{eq:normeHold}.
\par
We now come to the proof of the claimed interpolation inequality.
For every $x,y\in\mathbb{R}^N$ such that $x\not=y$, we write
\begin{equation}
\label{freddino}
\begin{split}
\frac{|u(x)-u(y)|}{|x-y|^\beta}&=\left(\frac{|u(x)-u(y)|}{|x-y|^\alpha}\right)^\frac{\beta}{\alpha}\,|u(x)-u(y)|^\frac{\alpha-\beta}{\alpha}\\
&\le [u]_{C^{0,\alpha}(\overline{\Omega})}^\frac{\beta}{\alpha}\,\left(|u(x)|^\frac{\alpha-\beta}{\alpha}+|u(y)|^\frac{\alpha-\beta}{\alpha}\right).
\end{split}
\end{equation}
Observe that we used the triangle inequality and the sub-additivity of concave powers.
In order to estimate the last term, we use that for every $z\in B_R(x)$ we have
\[
\begin{split}
|u(x)|\le |u(x)-u(z)|+|u(z)|&\le [u]_{C^{0,\alpha}(\overline\Omega)}\,|x-z|^{\alpha}+|u(z)|\\
&\le R^{\alpha}\,[u]_{C^{0,\alpha}(\overline\Omega)}+|u(z)|,
\end{split}
\]
where we also used \eqref{eq:normeHold}.
We now take the integral average of this estimate on $B_R(x)$. This gives
\begin{equation}
\label{pazienza}
\begin{split}
|u(x)|&\le R^\alpha\,[u]_{C^{0,\alpha}(\overline\Omega)}+\frac{1}{\omega_N\, R^N}\,\int_{B_R(x)} |u(z)|\,dz\\
&\le R^\alpha\,[u]_{C^{0,\alpha}(\overline\Omega)}+(\omega_N\,R^N)^{-\frac{1}{\gamma}}\,\left(\int_{\Omega} |u(z)|^\gamma\,dz\right)^\frac{1}{\gamma}.
\end{split}
\end{equation}
With the same argument, we have also
\[
|u(y)|\le R^{\alpha}\,[u]_{C^{0,\alpha}(\overline\Omega)}+(\omega_N\,R^N)^{-\frac{1}{\gamma}}\,\left(\int_{\Omega} |u(z)|^\gamma\,dz\right)^\frac{1}{\gamma}.
\]
We insert these estimates in \eqref{freddino} and use again the subadditivity of concave powers. We get
\[
\begin{split}
\frac{|u(x)-u(y)|}{|x-y|^\beta}\le 2\,[u]_{C^{0,\alpha}(\overline{\Omega})}^\frac{\beta}{\alpha}\,\left(R^{\alpha-\beta}\,[u]_{C^{0,\alpha}(\overline{\Omega})}^\frac{\alpha-\beta}{\alpha}+\left(\frac{1}{\omega_N\,R^N}\right)^\frac{\alpha-\beta}{\alpha\,\gamma}\,\|u\|^\frac{\alpha-\beta}{\alpha}_{L^\gamma(\Omega)}\right),
\end{split}
\]
which is valid for every $x\not=y$ and every $R>0$. If we not optimize in $R>0$, we finally get the desired estimate for the $C^{0,\beta}$ seminorm. The $\sup$ norm can be estimated with a similar optimization argument, by using \eqref{pazienza}.
\end{proof}
\begin{rem}
\label{rem:constantvaries}
We remark that the constants $C_1$ and $C_2$ in the previous result are given by
\[
C_1=2\,\omega_N^{-\frac{\alpha-\beta}{\gamma\,\alpha+N}}\,\left(\frac{\alpha\,\gamma}{N}\right)^\frac{N}{\alpha\,\gamma+N}\,\frac{1}{\chi},
\]
and
\[
C_2=\omega_N^{-\frac{\chi}{\gamma}}\, \left(\frac{\alpha\,\gamma}{N}\right)^\frac{N}{\alpha\,\gamma+N}\,\frac{1}{\chi}.
\]
In particular, we notice that $C_1$ stays uniformly bounded, as $\alpha$ varies in $(\beta,1]$. Similary, the constant $C_2$ stays bounded, as $\alpha$ varies in $(0,1]$. This observation will be useful in the sequel.
\end{rem}
	\subsection{Sobolev embeddings} 

	We recall the following family of {\it Gagliardo-Nirenberg interpolation inequalities}
	\begin{equation}
		\label{GNS}
		\|\psi\|_{L^r(\Omega)}\le G_{N,p,q,r}\,\|\nabla \psi\|_{L^p(\Omega)}^\vartheta\,\|\psi\|_{L^q(\Omega)}^{1-\vartheta},\qquad \mbox{ for every } \psi\in C^\infty_0(\Omega),
	\end{equation}
	which hold for every $1<p<\infty$, $1\le q\le p$ and $r$ satisfying 
	\[
	\begin{split}
		\begin{cases}
			q<r\le \dfrac{Np}{N-p}, \quad &\mbox{ if } 1 < p <N, \\
			q<r<\infty, &\mbox{ if } p=N,\\
			q<r\le \infty, \quad &\mbox{ if } p> N,
		\end{cases}
	\end{split}
	\]
	with $G_{N,p,q,r}>0$ only depending on $N,p,q, r$ and 
	\[
	\vartheta=\displaystyle \frac{N\,(r-q)}{N\,r\,(p-q)+p\,q\,r},
	\] 
	(for a proof, see for example \cite[Proposition 2.6]{BR}). 
	\vskip.2cm\noindent
	For $1<p<\infty$ and $\Omega\subset\mathbb{R}^N$ open set, we will indicate by $W^{1,p}_0(\Omega)$ the closure of $C^\infty_0(\Omega)$ in the usual Sobolev space $W^{1,p}(\Omega)$.	We will say that $\Omega$ is {\it $(p,q)-$admissible} if $\lambda_{p,q}(\Omega)>0$, the latter being defined in \eqref{lapq}.
	\par
	By using \eqref{GNS}, one can prove the following result:
	\begin{prop}\label{prop:spazi0}
		Let $1\le q<p<\infty$ and let  $\Omega\subseteq\mathbb{R}^N$ be an open set. Then
		\[ 
		\mathcal{D}^{1,p}_0(\Omega) \hookrightarrow L^q(\Omega) \qquad \Longleftrightarrow \qquad W^{1,p}_0(\Omega) \hookrightarrow L^q(\Omega).
		\]
		In particular, if $\Omega\subseteq\mathbb{R}^N$ is a $(p,q)-$admissible open set, then 
		\[
		W^{1,p}_0(\Omega)=\mathcal{D}^{1,p}_0(\Omega).
		\] 
	\end{prop}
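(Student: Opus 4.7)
The proof splits into the two implications, of which only one is substantial. The direction $\mathcal{D}^{1,p}_0(\Omega) \hookrightarrow L^q(\Omega) \Rightarrow W^{1,p}_0(\Omega) \hookrightarrow L^q(\Omega)$ is essentially immediate: the Poincar\'e-type inequality $\|\psi\|_{L^q(\Omega)} \le C\,\|\nabla \psi\|_{L^p(\Omega)}$, valid on $C^\infty_0(\Omega)$, trivially upgrades to $\|\psi\|_{L^q(\Omega)} \le C\,\|\psi\|_{W^{1,p}(\Omega)}$, because $\|\nabla \psi\|_{L^p(\Omega)} \le \|\psi\|_{W^{1,p}(\Omega)}$.

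For the converse, my plan is to use the Gagliardo-Nirenberg interpolation \eqref{GNS} at the exponent $r = p$. This is a legitimate choice: since $q < p < \infty$, the value $r = p$ sits in the admissible range of \eqref{GNS} in all three regimes $p<N$, $p=N$, $p>N$, and produces
\[
\|\psi\|_{L^p(\Omega)} \le G\,\|\nabla \psi\|_{L^p(\Omega)}^{\vartheta}\,\|\psi\|_{L^q(\Omega)}^{1-\vartheta}
\]
for some $\vartheta \in (0,1)$. Plugging this bound into the assumed estimate $\|\psi\|_{L^q(\Omega)} \le C\,\big(\|\psi\|_{L^p(\Omega)} + \|\nabla \psi\|_{L^p(\Omega)}\big)$ and applying Young's inequality with conjugate exponents $1/\vartheta$ and $1/(1-\vartheta)$ lets me absorb a controlled fraction of $\|\psi\|_{L^q(\Omega)}$ back into the left-hand side. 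What survives is precisely the Poincar\'e-type bound $\|\psi\|_{L^q(\Omega)} \le C'\,\|\nabla \psi\|_{L^p(\Omega)}$, i.e., the embedding $\mathcal{D}^{1,p}_0(\Omega) \hookrightarrow L^q(\Omega)$.

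The final assertion follows from the same interpolation device. If $\Omega$ is $(p,q)$-admissible, then combining $\|\psi\|_{L^q(\Omega)} \le C\,\|\nabla \psi\|_{L^p(\Omega)}$ with the Gagliardo-Nirenberg bound at $r = p$ gives directly
\[
\|\psi\|_{L^p(\Omega)} \le G\,C^{1-\vartheta}\,\|\nabla \psi\|_{L^p(\Omega)}\qquad \mbox{for every } \psi \in C^\infty_0(\Omega),
\]
so the norms $\|\nabla\cdot\|_{L^p(\Omega)}$ and $\|\cdot\|_{W^{1,p}(\Omega)}$ are equivalent on $C^\infty_0(\Omega)$ and the corresponding completions coincide. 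I do not anticipate any real obstacle: the whole argument is interpolation plus a standard absorption step, and the only care required is to check that $r=p$ is inside the admissible range of \eqref{GNS} and to choose parameters in Young's inequality so that the absorption is clean.
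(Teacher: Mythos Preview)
Your proposal is correct and follows essentially the same route as the paper: both argue the nontrivial implication by plugging the Gagliardo--Nirenberg inequality \eqref{GNS} at $r=p$ into the assumed $W^{1,p}$ bound and then absorbing the $L^q$ norm via Young's inequality with exponents $1/\vartheta$ and $1/(1-\vartheta)$. For the final identification $W^{1,p}_0(\Omega)=\mathcal{D}^{1,p}_0(\Omega)$ the paper simply cites \cite[Proposition 2.5]{BPZ}, whereas you spell out the (equivalent) observation that the same interpolation yields $\|\psi\|_{L^p(\Omega)}\le G\,C^{1-\vartheta}\|\nabla\psi\|_{L^p(\Omega)}$ and hence equivalence of the two norms on $C^\infty_0(\Omega)$.
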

	
	\begin{proof}
		The implication $\Longrightarrow$ is straightforward.	 For the converse implication, we suppose that there exists $C>0$ such that
		\[
		\|\psi\|_{L^q(\Omega)} \le C\, \|\psi\|_{W^{1,p}(\Omega)},\qquad \mbox{ for every } \psi\in C^\infty_0(\Omega).
		\]
By applying the Gagliardo-Nirenberg inequality \eqref{GNS} with $r=p$, it holds in particular
		\[ 
		\begin{split}
			\|\psi\|_{L^q(\Omega)} &\le C\, \|\psi\|_{W^{1,p}(\Omega)}\le C\, \left( \|\nabla \psi\|_{L^p(\Omega)} + G_{N,p,q,p} \, \|\nabla \psi\|^{\vartheta}_{L^p(\Omega)}  \|\psi\|^{1-\vartheta}_{L^q(\Omega)} \right).
		\end{split}
		\]
With a standard application of Young's inequality with exponents $1/\vartheta$ and $1/(1-\vartheta)$, we can absorb the $L^q$ norm on the right-hand side and get 
	\[ 
\|\psi\|_{L^q(\Omega)} \le \widetilde{C}\, \|\nabla \psi\|_{L^p(\Omega)}, 
	\]
	for some $\widetilde{C}>0$ independent of $\psi$. Thus, the embedding $\mathcal{D}^{1,p}_0(\Omega) \hookrightarrow L^q(\Omega)$ holds, as well.
	\par
		The last part of the statement follows by \cite[Proposition 2.5]{BPZ}.
	\end{proof}
	\begin{rem}
\label{oss:ugualip}
It is easily seen that the following implication holds
\[
\lambda_p(\Omega)>0 \qquad \Longrightarrow \qquad W^{1,p}_0(\Omega)=\mathcal{D}^{1,p}_0(\Omega).
\]	
Indeed, the positivity of $\lambda_p(\Omega)$ automatically gives that 
\[
\|\nabla u\|_{L^p(\Omega)}\qquad \mbox{ and }\qquad \|u\|_{W^{1,p}(\Omega)},
\]
are equivalent norms on $C^\infty_0(\Omega)$.
	\end{rem}
	The following simple result will be useful. Observe that we do not put any restriction either on $q$ or on the open set.
		\begin{lemma}
	\label{lm:inf}
	Let $1<p<\infty$, $1\le q\le \infty$ and let $\Omega \subsetneq \mathbb{R}^N$ be an open set. Then
	\[ 
	\lambda_{p,q}(\Omega)=\inf_{\psi \in C^{\infty}_0(\Omega)} \left\{\int_{\Omega} |\nabla \psi|^p \, dx\, :\, \|\psi\|_{L^q(\Omega)}=1\right\} = \inf_{\psi \in W^{1,p}_0(\Omega)} \left\{\int_{\Omega} |\nabla \psi|^p \, dx\, :\, \|\psi\|_{L^q(\Omega)} =1\right\}.
	\]
	\end{lemma}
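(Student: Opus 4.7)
Denote the two infima by
\[
B:=\inf_{\psi \in C^{\infty}_0(\Omega)} \left\{\int_{\Omega} |\nabla \psi|^p \, dx\, :\, \|\psi\|_{L^q(\Omega)}=1\right\},\qquad A:=\inf_{\psi \in W^{1,p}_0(\Omega)} \left\{\int_{\Omega} |\nabla \psi|^p \, dx\, :\, \|\psi\|_{L^q(\Omega)}=1\right\}.
\]
The inclusion $C^\infty_0(\Omega)\subset W^{1,p}_0(\Omega)$ yields $A\le B$ for free. The content is the reverse inequality $A\ge B$.

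The starting observation is that, by a simple homogeneity argument, the definition of $\lambda_{p,q}(\Omega)=B$ can be recast in the scaling-invariant form
\[
\int_\Omega |\nabla \psi|^p\,dx \ge B\,\|\psi\|_{L^q(\Omega)}^p,\qquad \mbox{ for every }\psi\in C^\infty_0(\Omega),
\]
which also trivially holds when $\psi\equiv 0$ or $\|\psi\|_{L^q(\Omega)}=+\infty$ (the latter not occurring for test functions anyway). Given any $\psi\in W^{1,p}_0(\Omega)$ with $\|\psi\|_{L^q(\Omega)}=1$, by the very definition of $W^{1,p}_0(\Omega)$ there exists a sequence $\{\psi_n\}\subset C^\infty_0(\Omega)$ converging to $\psi$ in the $W^{1,p}$ norm. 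Applying the displayed inequality to each $\psi_n$ and taking $\liminf$, the left-hand side converges to $\int_\Omega|\nabla\psi|^p\,dx$ thanks to $L^p$ convergence of the gradients, while for the right-hand side I need the lower semicontinuity
\[
\|\psi\|_{L^q(\Omega)}\le \liminf_{n\to\infty}\|\psi_n\|_{L^q(\Omega)}.
\]

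For $1\le q<\infty$, this is immediate: after passing to a subsequence I may assume $\psi_n\to\psi$ almost everywhere in $\Omega$ (a consequence of $L^p$ convergence) and Fatou's lemma applied to $|\psi_n|^q$ gives the bound. The main -- though still minor -- obstacle is the endpoint case $q=\infty$, where Fatou is not directly available. There I would argue as follows: fix $\varepsilon\in(0,1)$ and pick a measurable set $E\subset\Omega$ of positive (finite) measure on which $|\psi|\ge 1-\varepsilon$; after extracting again a.e. convergent subsequence, for a.e. $x\in E$ one has $|\psi_n(x)|\to|\psi(x)|\ge 1-\varepsilon$, and hence $\liminf_n \|\psi_n\|_{L^\infty(\Omega)}\ge 1-\varepsilon$. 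Letting $\varepsilon\to 0$ concludes this case.

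Combining these ingredients, I obtain $\int_\Omega|\nabla \psi|^p\,dx\ge B\,\|\psi\|^p_{L^q(\Omega)}=B$ for every admissible $\psi\in W^{1,p}_0(\Omega)$. Taking the infimum over such $\psi$ yields $A\ge B$, and the two infima coincide. Note that the whole argument requires no assumption of $(p,q)$-admissibility: if $B=0$ the inequality $A\ge B$ is automatic, and otherwise the scaling form of the Poincar\'e-type inequality is fully available.
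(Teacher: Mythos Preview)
Your proof is correct, but the route differs from the paper's. The paper also splits into the trivial case $B=0$ and the case $B>0$; in the latter it does \emph{not} appeal to Fatou-type lower semicontinuity. Instead, it applies the Poincar\'e inequality $B\,\|\psi_n-\psi_m\|_{L^q(\Omega)}^p\le \|\nabla\psi_n-\nabla\psi_m\|_{L^p(\Omega)}^p$ to the approximating sequence itself, concluding that $\{\psi_n\}$ is Cauchy in $L^q(\Omega)$ and hence converges to $\psi$ in $L^q$. This yields genuine convergence of the Rayleigh quotient, not just a one-sided inequality. Your approach is arguably more elementary: it bypasses this self-referential use of the sharp constant and relies only on a.e.\ convergence (extracted from $L^p$ convergence) together with Fatou's lemma, with a separate pointwise argument at $q=\infty$. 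The paper's argument has the minor advantage of delivering actual $L^q$ convergence of the approximants (a fact used elsewhere), while yours gives the bare inequality needed here with less machinery. One small presentational point: when you ``pass to a subsequence'' for a.e.\ convergence, you should note that it suffices to work along a subsequence realising the $\liminf$ of $\|\psi_n\|_{L^q(\Omega)}$, so that the conclusion indeed controls the full $\liminf$; this is implicit in your write-up but worth making explicit.
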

\begin{proof}
We first observe that the problem on the right-hand side is actually settled on $W^{1,p}_0(\Omega)\cap L^q(\Omega)$. Since we have $C^\infty_0(\Omega)\subset W^{1,p}_0(\Omega)\cap L^q(\Omega)$, we immediately obtain
\[
	\inf_{\psi \in C^{\infty}_0(\Omega)} \left\{\int_{\Omega} |\nabla \psi|^p \, dx\, :\, \|\psi\|_{L^q(\Omega)} =1\right\} \ge \inf_{\psi \in W^{1,p}_0(\Omega)} \left\{\int_{\Omega} |\nabla \psi|^p \, dx\, :\, \|\psi\|_{L^q(\Omega)} =1\right\}.
\] 
In order to prove the reverse inequality, we first observe that if $\lambda_{p,q}(\Omega)=0$, then from the previous inequality we would get 
\[
0=\inf_{\psi \in C^{\infty}_0(\Omega)} \left\{\int_{\Omega} |\nabla \psi|^p \, dx\, :\, \|\psi\|_{L^q(\Omega)} =1\right\} \ge \inf_{\psi \in W^{1,p}_0(\Omega)} \left\{\int_{\Omega} |\nabla \psi|^p \, dx\, :\, \|\psi\|_{L^q(\Omega)} =1\right\}\ge 0,
\]
and thus the desired identity trivially holds true.
\par
Let us now suppose that $\lambda_{p,q}(\Omega)>0$. For every $u\in W^{1,p}_0(\Omega)\cap L^q(\Omega)$ not identically vanishing, there exists a sequence $\{u_n\}_{n\in\mathbb{N}}\subset C^\infty_0(\Omega)$ such that 
\[
\lim_{n\to\infty} \|\nabla u_n-\nabla u\|_{L^p(\Omega)}=\lim_{n\to\infty} \|u_n-u\|_{L^p(\Omega)}=0.
\]
By using the definition of $\lambda_{p,q}(\Omega)>0$, we have 
\[
\lambda_{p,q}(\Omega)\,\|u_n-u_m\|^p_{L^q(\Omega)}\le \|\nabla u_n-\nabla u_m\|^p_{L^p(\Omega)},\qquad \mbox{ for every } n,m\in\mathbb{N},
\] 
thus, in particular, $\{u_n\}_{n\in\mathbb{N}}$ is a Cauchy sequence in $L^q(\Omega)$. This shows that we have 
\[
\lim_{n\to\infty} \|u_n-u\|_{L^q(\Omega)}=0,
\]
as well.
Hence, we get
\[ 
\lambda_{p,q}(\Omega) \le \lim_{n\to\infty}\frac{\displaystyle\int_{\Omega} |\nabla u_n|^p \, dx}{\|u_n\|^p_{L^q(\Omega)}}=\frac{\displaystyle\int_{\Omega} |\nabla u|^p \, dx}{\|u\|_{L^q(\Omega)}^p}. 
\]
Finally, by taking the infimum on $W^{1,p}_0(\Omega) \cap L^q(\Omega)$ on the right-hand side, we obtain that
\[ 
\lambda_{p,q}(\Omega) \le\inf_{\psi \in W^{1,p}_0(\Omega)} \left\{\int_{\Omega} |\nabla \psi|^p \, dx\, :\, \|\psi\|_{L^q(\Omega)} =1\right\}.
\]
This concludes the proof.
\end{proof}
In the sequel, we will need the following Poincar\'e--type inequality, for functions which vanish  at a point of the  boundary. Obviously, this may hold only in the superconformal case $p>N$, i.e. in the case where points have positive $p-$capacity.
	\begin{prop}[A Poincar\'e inequality]\label{prop:poincare}
		Let $p>N$ and $u \in W^{1,p}(B_R(x_0))$ be such that $u(z)=0$, with $z \in \partial B_R(x_0)$. Then there exists a constant $C_{N,p}>0$ such that
		\begin{equation}\label{poincare}
			\int_{B_R(x_0)} |u|^p \, dx \le C_{N,p} \, R^p \, \int_{B_R(x_0)} |\nabla u|^p \, dx.
		\end{equation}
	\end{prop}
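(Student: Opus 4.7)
The plan is to derive this inequality from the Morrey embedding, which is available precisely because $p>N$. The superconformal case ensures that $u \in W^{1,p}(B_R(x_0))$ admits a continuous representative with $u(z)=0$ understood in the pointwise sense, and moreover
\[
[u]_{C^{0,\alpha}(\overline{B_R(x_0)})} \le C_{N,p}\,\|\nabla u\|_{L^p(B_R(x_0))}, \qquad \alpha=1-\frac{N}{p},
\]
with a constant $C_{N,p}$ independent of $R$ (and of $x_0$). The scale invariance of this inequality follows from the change of variable $v(y)=u(x_0+Ry)$, which rescales $[u]_{C^{0,\alpha}}$ by $R^{-\alpha}$ and $\|\nabla u\|_{L^p(B_R(x_0))}$ by $R^{N/p-1}$; the two exponents cancel exactly because $\alpha=1-N/p$.

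Once Morrey's inequality is in place, the rest is essentially bookkeeping. Using $u(z)=0$ and the fact that $|x-z|\le 2R$ for every $x\in B_R(x_0)$, I would write
\[
|u(x)|=|u(x)-u(z)|\le [u]_{C^{0,\alpha}(\overline{B_R(x_0)})}\,|x-z|^{\alpha}\le (2R)^{\alpha}\,C_{N,p}\,\|\nabla u\|_{L^p(B_R(x_0))},
\]
which yields an $L^\infty$ bound of the form $\|u\|_{L^\infty(B_R(x_0))}\le C_{N,p}\,R^{\alpha}\,\|\nabla u\|_{L^p(B_R(x_0))}$, up to an absorbed constant.

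Raising this pointwise estimate to the $p$-th power and integrating over $B_R(x_0)$, one obtains
\[
\int_{B_R(x_0)} |u|^p\,dx \le C_{N,p}^{p}\,(2R)^{\alpha p}\,\omega_N\,R^{N}\,\|\nabla u\|_{L^p(B_R(x_0))}^{p}.
\]
The exponents combine as $\alpha\,p+N=(p-N)+N=p$, which produces exactly the factor $R^p$ required in \eqref{poincare}. Absorbing the numerical constants into a single $C_{N,p}$ completes the proof.

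There is no real obstacle: the statement is essentially the standard Morrey-type Poincar\'e inequality with vanishing at a single boundary point, and the only point requiring a little care is the scale invariance of the H\"older seminorm bound, which is what matches the $R^p$ factor. If one preferred a self-contained argument avoiding the explicit invocation of Morrey's embedding, one could instead extend $u$ by zero outside $B_R(x_0)$ on a slightly larger ball and use a representation via the fundamental solution of the Laplacian (or a Campanato-type averaging), but the short route via the H\"older estimate is the cleanest.
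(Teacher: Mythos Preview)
Your proof is correct, but it follows a genuinely different route from the paper's. The paper argues by contradiction and compactness: after scaling to $B_1$, it assumes a sequence $\{u_n\}$ with $\|u_n\|_{L^p}=1$, $\|\nabla u_n\|_{L^p}\to 0$ and $u_n(z)=0$, extracts (via the compact embedding $W^{1,p}(B_1)\hookrightarrow C(\overline{B_1})$ for $p>N$) a uniform limit $u$ which must be constant and vanish at $z$, hence identically zero, contradicting $\|u\|_{L^p}=1$. Your argument instead invokes the Morrey seminorm estimate $[u]_{C^{0,\alpha}(\overline{B_R})}\le C_{N,p}\|\nabla u\|_{L^p(B_R)}$ directly, uses the vanishing at $z$ to turn it into a pointwise bound, and integrates. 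Your approach is constructive and yields an explicit constant; the paper's is soft and non-quantitative. One structural remark: within the paper's own logic, Proposition~\ref{prop:poincare} is later used to establish the Morrey-type constant $\mu_p(B_1)$ and the ensuing sharp Morrey inequality, so your route reverses that dependency by taking Morrey's inequality as an external input---perfectly legitimate, since that inequality is standard and provable without Proposition~\ref{prop:poincare}.
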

	\begin{proof}
		It is sufficient to consider the case $x_0=0$  and $R = 1$, then the general case follows by scaling and translating. For the proof of \eqref{poincare}, we can use a standard contradiction argument, exploiting compact Sobolev embeddings. We assume by contradiction that \eqref{poincare} fails to hold with a uniform constant. Thus, there exists a sequence $\{u_n\}_{n \in \mathbb{N}} \subset W^{1,p}(B_1)$ such that 
		\[ 
		\|u_n\|_{L^p(B_1)} = 1, \quad  \lim_{n \to \infty} \|\nabla u_n\|_{L^p(B_1)} = 0, \quad \mbox{ and } \quad u_n(z)=0 \mbox{ with } z \in \partial B_1. 
		\]
		In particular, $\{u_n\}_{n \in \mathbb{N}}$ is bounded in $W^{1,p}(B_1)$. Hence, thanks to compact Sobolev embeddings for $p>N$, there exists $u \in W^{1,p}(B_1)\cap C(\overline{B_1})$ such that $u_n$ converges to $u$  weakly in $W^{1,p}(B_1)$ and uniformly in $\overline{B_1}$, up to a subsequence. In particular, we get
		\[
		\|\nabla u\|_{L^p(B_1)}\le \liminf_{n\to\infty} \|\nabla u_n\|_{L^p(B_1)}=0 \qquad \mbox{ and }\qquad u(z)=\lim_{n\to\infty} u_n(z)=0.
		\]
		Since $B_1$ is a connected set, the last two facts imply that $u\equiv 0$ in $B_1$.	However, by the uniform convergence we also have 
		\[
		\|u\|_{L^{p}(B_1)}=\lim_{n\to\infty}\|u_n\|_{L^p(B_1)} = 1.
		\] 
		This contradicts the fact that $u$ identically vanishes.
	\end{proof}
	
	\subsection{The Lane-Emden equation}\label{sez:solutions}
	\begin{defn}
		Let $1 \le q< p < \infty$ and let $\Omega\subset\mathbb{R}^N$ be a $(p,q)-$admissible open set. We say that a function $v \in W^{1,p}_0(\Omega)$ is a {\it weak solution } of the Lane-Emden equation 
		\begin{equation}\label{eq:LE}
			-\Delta_{p} u = |u|^{q-2}\,u, \qquad \mbox{ in } \Omega,
		\end{equation} 
		if it satisfies 
		\begin{equation}\label{LEint}
			\int_{\Omega} \langle |\nabla v|^{p-2} \,\nabla v, \nabla \psi \rangle \, dx = \int_\Omega |v|^{q-2}\, v\, \psi \, dx, \quad \mbox{ for every } \psi \in C_0^{\infty}(\Omega). 
		\end{equation}
		In the case $q=1$, for a non-negative function $v$, we follow the convention 
		\[
		|v|^{q-2}\,v=v^{q-1}=1.
		\] 
By density, in the weak formulation \eqref{LEint} we can admit test functions in $W^{1,p}_0(\Omega)$.
	\end{defn}
	
	\begin{rem}[Scalings] 
		Let $1 \le q<p<\infty$. Given $t>0$, it is easily seen that if $u \in W^{1,p}_0(\Omega)$ is a weak solution of \eqref{eq:LE}, then the rescaled function
		\[
		u_t(x)=t^{\frac{p}{p-q}}\, u \left( \frac{x-x_0}{t} \right),
		\]
		is a weak solution of the same equation, in the new set $x_0+t\, \Omega$. On the other hand, if $u\in W^{1,p}_0(\Omega)$ weakly solves \eqref{eq:LE}, then the function 
		\[v=\alpha^\frac{1}{p-q}\,u\]
		is a weak solution of 
		\begin{equation}\label{eq:LEalpha}
			-\Delta_{p} u = \alpha\, |u|^{q-2}\,u, \qquad \mbox{ in } \Omega,
			\end{equation}
		with $\alpha>0$.
	\end{rem}
	We recall that if $\Omega\subseteq \mathbb{R}^N$ is a $(p,q)-$admissible {\it connected} open set for some $1\le q<p<\infty$, then there exists a unique weak positive solution of the following problem
		\begin{equation}\label{eq:cauchy}
			\begin{cases}
				-\Delta_p u = |u|^{q-2} u, \quad \mbox{ in } \Omega,\\
				u \in W^{1,p}_0(\Omega), \\
				u > 0, \quad \mbox{ in } \Omega,
			\end{cases}
		\end{equation}
		see \cite[Corollary 4.4]{BPZ}.
		We will denote this solution by $w_{p,q}^{\Omega}$.
		We recall that the latter also coincides with the unique positive solution of 
		\begin{equation}\label{eq:minprob}
			\min_{\psi\in W^{1,p}_0(\Omega)} \mathfrak{F}_{p,q}(\psi),
		\end{equation}
		where 
		\begin{equation}\label{eq:funzinteg}
			\mathfrak{F}_{p,q}(\psi):= \frac{1}{p} \int_\Omega |\nabla \psi|^{p} \, dx - \frac{1}{q} \int_\Omega |\psi|^{q} \, dx, \qquad \mbox{ for every } \psi\in W^{1,p}_0(\Omega),
		\end{equation}
		see  \cite[Theorem 4.3]{BPZ}.  
		By optimality and thanks to the relevant normalization condition on the $L^q$ norm, when $\Omega$ is a connected $(p,q)-$admissible open set, the positive minimizer of \eqref{lapq} coincides with the weak positive solution of \eqref{eq:LEalpha} corresponding to the choice $\alpha=\lambda_{p,q}(\Omega)$.
	\par
	The next proof is standard, we include it for completeness.
	\begin{prop}Let $1<p<\infty$, $1\le q<p$ and let $\Omega\subseteq\mathbb{R}^N$ be a $(p,q)$-admissible connected open set. Then 
		\begin{equation}\label{minprob}
			\min_{\psi\in W^{1,p}_0(\Omega)} \mathfrak{F}_{p,q}(\psi)=\frac{q-p}{p\,q}\,\left (  \frac{1}{\lambda_{p,q}(\Omega)}\right)^\frac{q}{p-q}
		\end{equation}
		and
		\begin{equation}\label{pqnorm}  
			\int_\Omega |\nabla w_{p,q}^{\Omega}|^{p} \, dx = \int_\Omega |w_{p,q}^{\Omega}|^{q} \, dx =  \left (\frac{1}{\lambda_{p,q}(\Omega)}\right)^\frac{q}{p-q}.
		\end{equation}
	\end{prop}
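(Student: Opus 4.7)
The plan is to reduce the statement to a one-parameter minimization in the scaling $\psi = t\,\varphi$, which turns $\mathfrak{F}_{p,q}$ into an elementary scalar problem and exposes both $\lambda_{p,q}(\Omega)$ and the norm identity in one stroke.

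First, I would extract the Euler-Lagrange identity. Since $w^\Omega_{p,q}\in W^{1,p}_0(\Omega)$ is a weak solution of \eqref{eq:LE} and by density the weak formulation admits test functions in $W^{1,p}_0(\Omega)$, I test \eqref{LEint} with $\psi = w^\Omega_{p,q}$ itself to obtain
\[
\int_\Omega |\nabla w^\Omega_{p,q}|^p\,dx = \int_\Omega |w^\Omega_{p,q}|^q\,dx.
\]
This gives the first equality in \eqref{pqnorm}; it remains only to identify this common value.

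Next, to compute the minimum of $\mathfrak{F}_{p,q}$, I use the scaling. For a fixed admissible $\varphi\in W^{1,p}_0(\Omega)\setminus\{0\}$ with $\|\varphi\|_{L^q(\Omega)}=1$ and a parameter $t>0$, we have
\[
\mathfrak{F}_{p,q}(t\,\varphi) = \frac{t^p}{p}\int_\Omega|\nabla\varphi|^p\,dx - \frac{t^q}{q}.
\]
A standard one-variable optimization in $t$ gives the optimal choice $t^{p-q}=\Big(\int_\Omega|\nabla\varphi|^p\,dx\Big)^{-1}$, and substitution yields
\[
\min_{t>0}\mathfrak{F}_{p,q}(t\,\varphi) = \frac{q-p}{p\,q}\,\left(\int_\Omega|\nabla\varphi|^p\,dx\right)^{-q/(p-q)}.
\]
Since $(q-p)/(p\,q)<0$, minimizing further over all admissible $\varphi$ with $\|\varphi\|_{L^q(\Omega)}=1$ amounts to \emph{minimizing} $\int_\Omega|\nabla\varphi|^p\,dx$ subject to that constraint, which by Lemma \ref{lm:inf} produces precisely $\lambda_{p,q}(\Omega)$. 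This establishes \eqref{minprob}.

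Finally, to close \eqref{pqnorm}, I evaluate $\mathfrak{F}_{p,q}(w^\Omega_{p,q})$ using the identity $\int_\Omega |\nabla w^\Omega_{p,q}|^p\,dx=\int_\Omega|w^\Omega_{p,q}|^q\,dx=:A$ from the first step, which gives $\mathfrak{F}_{p,q}(w^\Omega_{p,q})=((q-p)/(p\,q))\,A$; then I equate this to the value \eqref{minprob} and solve for $A$. The only mild subtlety is to justify that the scaling argument sees the \emph{global} minimum of $\mathfrak{F}_{p,q}$ on $W^{1,p}_0(\Omega)$ (not just among $t\varphi$): this is immediate because every nonzero $\psi\in W^{1,p}_0(\Omega)$ with $\mathfrak{F}_{p,q}(\psi)<0$ can be written as $t\,\varphi$ with $\|\varphi\|_{L^q(\Omega)}=1$ and $t=\|\psi\|_{L^q(\Omega)}$, while the zero function gives $\mathfrak{F}_{p,q}(0)=0$, which is larger than the negative value found above (recall $q<p$). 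No serious obstacle is expected.
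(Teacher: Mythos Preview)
Your proof is correct and follows essentially the same approach as the paper's own proof: both exploit the homogeneity mismatch via the scaling $\psi\mapsto t\,\psi$, optimize the resulting one-variable function in $t$, identify the remaining Rayleigh-type quotient with $\lambda_{p,q}(\Omega)$, and then combine the Euler--Lagrange identity $\int_\Omega|\nabla w^\Omega_{p,q}|^p=\int_\Omega|w^\Omega_{p,q}|^q$ with the computed minimum value to extract \eqref{pqnorm}. The only cosmetic differences are that you normalize $\|\varphi\|_{L^q}=1$ from the outset (the paper keeps both integrals free and writes $t_0$ as their ratio) and that you invoke Lemma~\ref{lm:inf} explicitly to pass from $C^\infty_0$ to $W^{1,p}_0$, which the paper leaves implicit.
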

	
	\begin{proof} 
		Exploiting the different homogeneities of the two integrals and the fact that the minimum problem is equivalently settled on $W^{1,p}_0(\Omega)\setminus\{0\}$ (since a positive minimizer exists, see \cite[Theorem 3.3]{BPZ}), we get that 
		\[
		\min_{\psi\in W^{1,p}_0(\Omega)} \mathfrak{F}_{p,q}(\psi)= - \max_{\psi\in W^{1,p}_0(\Omega)\setminus\{0\}, t> 0} \left\{  \frac{t^q}{q} \int_\Omega |\psi|^{q} \, dx -\frac{t^p}{p} \int_\Omega |\nabla \psi|^{p} \, dx \right\}.
		\]
		It is easily seen that, for every $\psi\in W^{1,p}_0(\Omega)\setminus\{0\}$, the function
		\[t\mapsto\frac{t^q}{q} \int_\Omega  |\psi|^{q} \, dx- \frac{t^p}{p} \int_\Omega |\nabla \psi|^{p} \, dx\]
		is maximal  for 
		\[ 
		t_0= \left(\dfrac{\displaystyle\int_\Omega |\psi|^{q}\, dx}{\displaystyle\int_\Omega |\nabla \psi|^{p} \, dx}\right)^{\frac{1}{p-q}}.
		\]
		With such a choice of $t$, we get 
		\[ \frac{t_0^q}{q} \int_\Omega |\psi|^{q} \, dx-\frac{t_0^p}{p} \int_\Omega |\nabla \psi|^{p} \, dx  = \frac{p-q}{p\,q} \dfrac{\left(\displaystyle\int_\Omega |\psi|^{q}\, dx \right)^{\frac{p}{p-q}}}{\left(\displaystyle\int_\Omega  |\nabla \psi|^{p} \, dx \right)^{\frac{q}{p-q}}}. \]
		Then, by recalling the definition of $\lambda_{p,q}(\Omega)$, we get \eqref{minprob}.
		\par
		Finally, since $w_{p,q}^{\Omega}$ satifies the Lane-Emden equation \eqref{eq:LE}, by using this solution as a test function in \eqref{LEint}, we get that 
		\[
		\int_\Omega |\nabla w_{p,q}^{\Omega}|^{p} \, dx = \int_\Omega |w_{p,q}^{\Omega}|^{q} \, dx.
		\] 
		This implies
		\[\mathfrak{F}_{p,q}(w_{p,q}^{\Omega})=\frac{q-p}{p\,q }\int_\Omega |w_{p,q}^{\Omega}|^{q} \, dx.\] 
		Thanks to \eqref{minprob}, equality \eqref{pqnorm} easily follows.
	\end{proof}

	We now discuss the behavior of the $L^{\infty}$ norm of $w_{p,q}^{\Omega}$ when  $1\le q<\infty$ is fixed, $p$ goes to $\infty$ and $\Omega$ is a bounded convex open set.
	This result will be useful somewhere in Section \ref{Sec:asymp}.
	
	\begin{prop}\label{prop:convcase}
		Let $1<p<\infty$, $1\le q<p$ and $\Omega \subseteq\mathbb{R}^N$ be a
		bounded convex open set. Then 
		\[ \lim_{p \to \infty} \|w_{p,q}^{\Omega}\|_{L^{\infty}(\Omega)} = r_\Omega. \]
		In particular, it holds \begin{equation}\label{eq:behaviorpalla}  
		\lim_{p \to \infty} w_{p,q}^{B_1}(0) = 1.
		\end{equation}
	\end{prop}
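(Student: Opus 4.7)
My plan is to prove the proposition in three steps. Step (a) treats the radial case $\Omega = B_1$ by direct ODE analysis, which both establishes \eqref{eq:behaviorpalla} and supplies the main ingredient for the general case. Step (b) derives the lower bound $\liminf_p \|w_{p,q}^\Omega\|_{L^\infty(\Omega)} \ge r_\Omega$ from comparison with inscribed balls and the scaling rule, and Step (c) delivers the matching upper bound via the Morrey embedding.

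For Step (a), rotational symmetry together with uniqueness of the positive solution to \eqref{eq:cauchy} forces $w_{p,q}^{B_1}$ to be radial and nonincreasing; writing it as $w(r)$, the radial form of the equation integrates (using $w'(0)=0$) to
\[
(-w'(r))^{p-1} = \frac{1}{r^{N-1}} \int_0^r s^{N-1}\, w(s)^{q-1} \, ds.
\]
Setting $M := w(0) = \|w_{p,q}^{B_1}\|_{L^\infty(B_1)}$ and using the two-sided bound $w(r) \le w(s) \le M$ for $s \le r$, together with an integration of $-w'$ from $0$ to $1$ (separating variables in the lower estimate, exploiting that $(q-1)/(p-1) < 1$), should produce
\[
\left(\frac{p-q}{p}\right)^{\frac{p-1}{p-q}} N^{-\frac{1}{p-q}} \,\le\, M \,\le\, \left(\frac{p-1}{p}\right)^{\frac{p-1}{p-q}} N^{-\frac{1}{p-q}}.
\]
A direct check shows that both bounds converge to $1$ as $p \to \infty$, which is exactly \eqref{eq:behaviorpalla}.

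For Step (b), I will use the fact that for every $r < r_\Omega$ there exists $x_0 \in \Omega$ with $B_r(x_0) \subseteq \Omega$. The comparison principle for the Lane-Emden equation proved in \cite{BPZ}, together with the scaling rule $u_t(x) = t^{p/(p-q)} u((x-x_0)/t)$, gives
\[
\|w_{p,q}^\Omega\|_{L^\infty(\Omega)} \,\ge\, w_{p,q}^\Omega(x_0) \,\ge\, w_{p,q}^{B_r(x_0)}(x_0) \,=\, r^{\frac{p}{p-q}}\, w_{p,q}^{B_1}(0).
\]
Passing to the limit $p \to \infty$, invoking Step (a), and then letting $r \nearrow r_\Omega$ yields $\liminf_p \|w_{p,q}^\Omega\|_{L^\infty(\Omega)} \ge r_\Omega$.

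For Step (c), I restrict to $p > N$ (which is automatic as $p \to \infty$). Morrey's embedding for $W^{1,p}_0(\Omega)$ gives $w_{p,q}^\Omega \in C^{0,\alpha_p}(\overline{\Omega})$ with $\alpha_p = 1 - N/p$, together with $[w_{p,q}^\Omega]_{C^{0,\alpha_p}(\overline\Omega)} \le C_{N,p}\,\|\nabla w_{p,q}^\Omega\|_{L^p(\Omega)}$, where $C_{N,p} \to 1$ as $p \to \infty$ (this is the content of Corollary \ref{cor:morrey} from Section \ref{sez4}). Since $w_{p,q}^\Omega$ vanishes on $\partial\Omega$ and attains its maximum at some $x_* \in \Omega$ with $d_\Omega(x_*) \le r_\Omega$, evaluating the Hölder seminorm between $x_*$ and its nearest boundary point gives $\|w_{p,q}^\Omega\|_{L^\infty} \le [w_{p,q}^\Omega]_{C^{0,\alpha_p}}\,r_\Omega^{\alpha_p}$. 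Combining with the identity $\|\nabla w_{p,q}^\Omega\|_{L^p}^p = \|w_{p,q}^\Omega\|_{L^q}^q \le |\Omega|\,\|w_{p,q}^\Omega\|_{L^\infty}^q$ from \eqref{pqnorm} simplifies to $\|w_{p,q}^\Omega\|_{L^\infty}^{(p-q)/p} \le C_{N,p}\,|\Omega|^{1/p}\,r_\Omega^{\alpha_p}$; raising to the $p/(p-q)$ power and taking $p \to \infty$ gives $\limsup_p \|w_{p,q}^\Omega\|_{L^\infty} \le r_\Omega$. The hard part will be invoking $C_{N,p} \to 1$ without a circular reference to later sections; one workaround is to use the classical (non-sharp) Morrey constant, whose explicit prefactor of the form $((p-1)/(p-N))^{1-1/p}$ manifestly tends to $1$ and is enough for the passage to the limit.
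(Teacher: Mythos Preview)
Your argument is correct and takes a genuinely different route from the paper's. The paper obtains the lower bound by combining Picone's inequality with the asymptotics $(\lambda_p(\Omega))^{1/p}\to 1/r_\Omega$ from \cite{JLM}, and the upper bound by invoking the convexity-based estimate \cite[Corollary~5.3]{BPZ} together with an elementary lower bound on the one-dimensional constant $\pi_{p,q}$; the ball case then falls out as a special case. You instead settle the ball first by a clean ODE integration (your two-sided bounds on $M=w_{p,q}^{B_1}(0)$ are exactly right), feed this into the comparison/scaling for the lower bound, and get the upper bound via Morrey's inequality plus the energy identity~\eqref{pqnorm}. Your upper bound never uses convexity---only $|\Omega|<\infty$---so it is in fact slightly more general than what is stated; the paper's route, by contrast, is self-contained within Section~\ref{sez2} and makes essential use of convexity through \cite[Corollary~5.3]{BPZ}.

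One caveat on Step~(c): your ``workaround'' is not safe as written. Standard textbook Morrey constants carry dimensional prefactors that do \emph{not} tend to $1$ as $p\to\infty$, and the specific form $((p-1)/(p-N))^{1-1/p}$ is not the full classical constant. What you actually need is $\limsup_{p\to\infty}\mathfrak{m}_p(\mathbb{R}^N)^{-1/p}\le 1$, equivalently $\liminf_{p\to\infty}\mu_p(B_1)^{1/p}\ge 1$, and this is precisely the content of Lemma~\ref{lm:morreyA} part~(4). There is no circularity: the proof of Lemma~\ref{lm:morreyA} relies only on Proposition~\ref{prop:poincare} and compactness, not on Proposition~\ref{prop:convcase}. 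So the clean fix is simply to cite Lemma~\ref{lm:morreyA} (or Corollary~\ref{cor:morrey}) as a forward reference, rather than appealing to a non-sharp constant.
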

	
	\begin{proof}
		For every $\eta \in C^{\infty}_0(\Omega)$ and $\varepsilon>0$, the function 
		\[
		\psi=\frac{|\eta|^p}{(w_{p,q}^{\Omega}+\varepsilon)^{p-1}} \in W^{1,p}_0(\Omega),
		\] 
		is a feasible test function in the equation \eqref{LEint} for $w_{p,q}^{\Omega}$. Hence, by applying Picone's inequality for the $p-$Laplacian (see \cite{AH}), we get that
		\[
		\begin{split}
			\int_\Omega (w_{p,q}^{\Omega})^{\,q-1}\, \frac{|\eta|^p}{(w_{p,q}^{\Omega}+\varepsilon)^{p-1}} \, dx
			&= \int_{\Omega} \left\langle |\nabla w_{p,q}^{\Omega}|^{p-2} \,\nabla w_{p,q}^{\Omega}, \nabla\left(\dfrac{|\eta|^p}{(w_{p,q}^{\Omega}+\varepsilon)^{p-1}}\right) \right\rangle \, dx \\
			&\le \int_\Omega |\nabla \eta|^p\,dx.
		\end{split}
		\] 
		Since $w_{p,q}^{\Omega} \in W^{1,p}_0(\Omega)$ is strictly positive by the minimum principle, by sending $\varepsilon$ to $0$, we get  
		\begin{equation}\label{hardylaneemden} 
			\int_\Omega \frac{|\eta|^p}{(w_{p,q}^{\Omega})^{\,p-q}} \, dx \le \int_\Omega |\nabla \eta|^p\,dx, \quad \mbox{ for every } \eta \in C^{\infty}_0(\Omega). 
		\end{equation} 
		By recalling the definition of $\lambda_p(\Omega)$ and that $w_{p,q}^\Omega$ is bounded, the latter estimate implies that
		\begin{equation}\label{eq:lap}
			1\le \| w_{p,q}^{\Omega} \|_{L^{\infty}(\Omega)}^{p-q} \, \lambda_{p}(\Omega).
		\end{equation}
		By raising to the power $1/p$ and using that
		\[
		\lim_{p\to\infty}\Big(\lambda_p(\Omega)\Big)^{\frac{1}{p}}= \frac{1}{r_\Omega},
		\] 
		(see \cite[Lemma 1.5]{JLM}), from \eqref{eq:lap} we obtain
		\[ 
		r_\Omega\le \liminf_{p \to \infty} \|w_{p,q}^{\Omega}\|_{L^{\infty}(\Omega)} .
		\]
		On the other hand, by using \cite[Corollary 5.3]{BPZ},
		we have that 
		\begin{equation}
		\label{boundACZ}
		\|w_{p,q}^{\Omega}\|_{L^{\infty}(\Omega)} \le \left(\frac{2}{\pi_{p,q}}\right)^\frac{p}{p-q}\,\left(\frac{q\,p-q+p}{p}\right)^\frac{1}{q} r_{\Omega}^{\frac{p-q} p},
		\end{equation}
		where $\pi_{p,q}$ is the following one-dimensional Sobolev-Poincar\'e constant
		\[
		\pi_{p,q}:=\Big(\lambda_{p,q}((0,1))\Big)^\frac{1}{p}.
		\]
		We claim that 
			\begin{equation}\label{A4}
			\pi_{p,q} \ge 2^{\left(1-\frac{1}{p}\right)} \left( q\left(1-\frac 1 p\right)+1\right)^{\frac{1}{q}},
		\end{equation}
		for every $1<p<\infty$ and $1\le q<\infty$. Postponing the proof of this fact for a moment, we see that \eqref{boundACZ} and \eqref{A4} would give
		\[
		\limsup_{p\to\infty}\|w_{p,q}^{\Omega}\|_{L^{\infty}(\Omega)} \le  r_{\Omega},
		\]
		as desired. Then the last part of the statement would follow by using that $w^{B_1}_{p,q}$ is radially symmetric decreasing, so that $w_{p,q}^{B_1}(0)= \|w_{p,q}^{B_1}\|_{L^{\infty}(B_1)}$.  
\par
We are left with establishing \eqref{A4}. Let $\varphi \in C^{\infty}_0((0,1))$, for every $t \in (0,1)$, we have that 
		\[
		|\varphi(t)|=|\varphi(t)-\varphi(0)| \le \int_{0}^{t} |\varphi'| \, dt\le t^{1-\frac{1}{p}}\, \| \varphi' \|_{L^p([0,1])},
		\]
		and 
		\[
		|\varphi(t)|=|\varphi(t)-\varphi(1)| \le \int_{t}^{1} |\varphi'| \, dt\le (1-t)^{1-\frac{1}{p}}\, \| \varphi' \|_{L^p([0,1])}.
		\]
		By raising to the power $q$ and integrating the first estimate on $(0,1/2)$ and the second one on $(1/2,1)$, we get
		\[
		\int_{0}^{1/2} |\varphi(t)|^q \, dt \le \frac{1}{q\left(1-\dfrac{1}{p}\right)+1}\,  \left(\frac{1}{2} \right)^{q\left(1-\frac{1}{p}\right)+1}  \, \| \varphi' \|_{L^{p}([0,1])}^q,
		\]
		and
		\[
		\int_{1/2}^{1} |\varphi(t)|^q \, dt \le \frac{1}{q\left(1-\dfrac{1}{p}\right)+1}\ \,\left(\frac{1}{2} \right)^{q\left(1-\frac{1}{p}\right)+1} \, \| \varphi' \|_{L^{p}([0,1])}^q.
		\]
		Then, by summing up, we obtain
		\[
		\int_{0}^{1} |\varphi(t)|^q \, dt \le \frac{1}{q\left(1-\dfrac{1}{p}\right)+1}  \left( \frac{1}{2}\right)^{q\left(1-\frac{1}{p}\right)} \, \| \varphi' \|_{L^{p}([0,1])}^q.
		\]
		By raising to the power $1/q$ on both sides and using the arbitrariness of $\varphi$, we get \eqref{A4}. 
	\end{proof}

	\begin{rem}  
		We point out that inequality \eqref{hardylaneemden} holds for general open sets: in this case, if the open set is not $(p,q)-$admissible, the function $w_{p,q}^{\Omega}$ has to be carefully defined. We refer to \cite[Proposition 4.5]{BR} for the case of the $p-$torsion function, i.e. the case  $q=1$ and $1<p<\infty$. The general case is contained in \cite[Lemma 4.1]{BFR} and \cite[Corollary 3.3]{TTD}. 
	\end{rem}

	\section{The distance function}\label{sez3}
	In this section we investigate some consequences of the summability of the distance function.
	First of all, we prove that when $d_{\Omega}^{\,\alpha}$ is summable for some $0<\alpha<\infty$, the set $\Omega$ has finite inradius. This comes with an explicit (sharp) bound.
	\begin{lemma}\label{lemma:r<infty}
		Let $0<\alpha<\infty$ and let $\Omega\subsetneq\mathbb{R}^N$ be an open set such that $d^\alpha_\Omega\in L^1(\Omega)$. Then $r_\Omega<+\infty$ and it holds
		\begin{equation}
			\label{inraggio}
			r_\Omega\le C_{N,\alpha}\,\left(\int_\Omega d_\Omega^\alpha\,dx\right)^\frac{1}{N+\alpha},
		\end{equation}
		where the constant $C_{N,\alpha}$ is given by
		\[
		C_{N,\alpha}=\left(N\,\omega_N\,\int_0^1 (1-\varrho)^\alpha\,\varrho^{N-1}\,d\varrho\right)^{-\frac{1}{N+\alpha}}.
		\]
		Moreover, inequality \eqref{inraggio} is sharp, since equality holds for a ball.
	\end{lemma}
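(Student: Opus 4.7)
The plan is to extract a lower bound on $\int_\Omega d_\Omega^\alpha\,dx$ in terms of any radius $r$ of a ball contained in $\Omega$, and then take the supremum via the definition \eqref{inradius} of $r_\Omega$.

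First, I would fix $x_0\in\Omega$ and $r>0$ with $B_r(x_0)\subset\Omega$. The key geometric observation is that, by the triangle inequality, for every $x\in B_r(x_0)$ and every $y\in\partial\Omega$ one has $|x-y|\ge |x_0-y|-|x-x_0|\ge r-|x-x_0|$, hence
\[
d_\Omega(x)\ge r-|x-x_0|,\qquad \mbox{ for every } x\in B_r(x_0).
\]
Raising to the power $\alpha$ (allowed since $r-|x-x_0|\ge 0$ on $B_r(x_0)$) and integrating yields
\[
\int_\Omega d_\Omega^\alpha\,dx\;\ge\;\int_{B_r(x_0)}\bigl(r-|x-x_0|\bigr)^\alpha\,dx.
\]
Passing to polar coordinates centered at $x_0$ and then performing the change of variable $\varrho=r\,t$, the right-hand side becomes
\[
N\,\omega_N\int_0^r (r-\varrho)^\alpha\,\varrho^{N-1}\,d\varrho\;=\;N\,\omega_N\,r^{N+\alpha}\int_0^1 (1-t)^\alpha\,t^{N-1}\,dt\;=\;\frac{r^{N+\alpha}}{C_{N,\alpha}^{N+\alpha}}.
\]

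Solving for $r$, I obtain
\[
r\;\le\;C_{N,\alpha}\,\left(\int_\Omega d_\Omega^\alpha\,dx\right)^{\frac{1}{N+\alpha}},
\]
for every admissible radius $r$. The right-hand side is a finite constant depending only on the summability hypothesis on $d_\Omega^\alpha$, and is independent of $x_0$ and $r$; taking the supremum over all pairs $(x_0,r)$ with $B_r(x_0)\subset\Omega$, the definition \eqref{inradius} of $r_\Omega$ immediately produces both $r_\Omega<+\infty$ and the claimed inequality \eqref{inraggio}.

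For the sharpness, I would simply test on $\Omega=B_R(x_0)$. In that case $r_\Omega=R$ and $d_\Omega(x)=R-|x-x_0|$, so the lower bound obtained above is actually an equality, giving
\[
\int_\Omega d_\Omega^\alpha\,dx=\frac{R^{N+\alpha}}{C_{N,\alpha}^{N+\alpha}},
\]
which means equality holds in \eqref{inraggio}. There is no real obstacle in this argument; the only subtlety is the geometric lower bound $d_\Omega(x)\ge r-|x-x_0|$ on a ball inside $\Omega$, which is precisely the step that becomes an equality when $\Omega$ itself is a ball, explaining both the sharp constant and the sharpness statement in one stroke.
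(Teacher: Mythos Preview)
Your proof is correct and follows essentially the same approach as the paper: both establish the lower bound $d_\Omega(x)\ge r-|x-x_0|$ on a ball $B_r(x_0)\subset\Omega$, integrate, rescale to extract the factor $r^{N+\alpha}$, and take the supremum over admissible balls. Your version is slightly more explicit in justifying the key geometric inequality via the triangle inequality and in verifying the sharpness on a ball, but the strategy is identical.
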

	
	\begin{proof}
		Let $B_r(x_0)\subseteq\Omega$, then we have that
		\[
		(r-|x-x_0|)_+=d_{B_r(x_0)}(x)\le d_\Omega(x),\qquad \mbox{ for every } x\in B_r(x_0).
		\]
		By raising to the power $\alpha$ and integrating, we get 
		\[
		\int_{B_r(x_0)}(r-|x-x_0|)^\alpha_+\,dx\le \int_{\Omega} d_\Omega^\alpha\,dx.
		\]
		By using the change of variable $y=(x-x_0)/r$, from the previous estimate we also get
		\[
		r^{N+\alpha}\le \frac{\displaystyle\int_{\Omega} d_\Omega^\alpha\,dx}{\displaystyle\int_{B_1} (1-|y|)_+^\alpha\,dy}.
		\]
		If we now take the supremum over the admissible balls, we get the conclusion.
	\end{proof}
	\begin{lemma}
	\label{lm:distholder}
	Let $0<\alpha<\infty$ and let $\Omega\subsetneq\mathbb{R}^N$ be an open set such that $r_\Omega<+\infty$. Then for every $0<\beta<1$ we have 
	\[
	[d_\Omega]_{C^{0,\beta}(\overline\Omega)}\le (2\,r_\Omega)^{1-\beta}.
	\]
	\end{lemma}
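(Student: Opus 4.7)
The plan is to combine two elementary bounds for the increments of $d_\Omega$ and interpolate between them. First, I would record that $d_\Omega$ is $1$-Lipschitz on $\mathbb{R}^N$, i.e.
\[
|d_\Omega(x)-d_\Omega(y)|\le |x-y|,\qquad \mbox{ for every } x,y\in\overline{\Omega},
\]
which is a direct consequence of the triangle inequality applied to the definition of $d_\Omega$ as a minimum over $\partial\Omega$. Second, I would use that, since $r_\Omega$ coincides with the supremum of $d_\Omega$ on $\Omega$ and $d_\Omega\equiv 0$ on $\partial\Omega$, we have $0\le d_\Omega\le r_\Omega$ on $\overline{\Omega}$, so
\[
|d_\Omega(x)-d_\Omega(y)|\le 2\,r_\Omega,\qquad \mbox{ for every } x,y\in\overline{\Omega}.
\]

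The key trick is then to split the increment as a convex combination of its two bounds by writing
\[
|d_\Omega(x)-d_\Omega(y)|=|d_\Omega(x)-d_\Omega(y)|^{\beta}\,|d_\Omega(x)-d_\Omega(y)|^{1-\beta},
\]
and estimating the first factor by the Lipschitz bound (raised to the power $\beta$) and the second by the $L^\infty$ bound (raised to the power $1-\beta$). This gives
\[
|d_\Omega(x)-d_\Omega(y)|\le |x-y|^{\beta}\,(2\,r_\Omega)^{1-\beta},
\]
so dividing by $|x-y|^\beta$ and taking the supremum over $x\ne y$ in $\overline{\Omega}$ yields exactly $[d_\Omega]_{C^{0,\beta}(\overline{\Omega})}\le (2\,r_\Omega)^{1-\beta}$.

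There is essentially no obstacle: both ingredients (the $1$-Lipschitz character of $d_\Omega$ and its boundedness by $r_\Omega$) are immediate, and the interpolation is a one-line computation. The only point worth a small remark is that the bound $d_\Omega\le r_\Omega$ holds globally on $\overline{\Omega}$ thanks to the definition \eqref{inradius} of the inradius, together with the fact that $r_\Omega$ equals $\sup_{\Omega}d_\Omega$ (recalled right after \eqref{inradius}), so the $(2\,r_\Omega)^{1-\beta}$ factor is meaningful precisely under the standing assumption $r_\Omega<+\infty$.
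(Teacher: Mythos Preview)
Your proof is correct and follows essentially the same approach as the paper: both combine the $1$-Lipschitz character of $d_\Omega$ with the bound $|d_\Omega|\le r_\Omega$ to interpolate into the $C^{0,\beta}$ seminorm. The only cosmetic difference is that the paper splits into the two regimes $|x-y|\le t$ and $|x-y|>t$ and then optimizes in $t$, whereas your factorization $|d_\Omega(x)-d_\Omega(y)|^\beta\,|d_\Omega(x)-d_\Omega(y)|^{1-\beta}$ reaches the same constant in one step.
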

	\begin{proof}
	We extend $d_\Omega$ to be $0$ outside $\Omega$ and consider it as a Lipschitz continuous function defined on the whole $\mathbb{R}^N$, By recalling \eqref{eq:normeHold}, for every $0<\beta<1$ we have
	\[
	[d_\Omega]_{C^{0,\beta}(\overline\Omega)}=[d_\Omega]_{C^{0,\beta}(\mathbb{R}^N)}.
	\]
It is now sufficient to write for $t>0$
\[
[d_\Omega]_{C^{0,\beta}(\mathbb{R}^N)}=\max\left\{\sup_{x\ne y;\, |x-y|\le t} \frac{|d_\Omega(x)-d_\Omega(y)|}{|x-y|^\beta},\, \sup_{\, |x-y|> t} \frac{|d_\Omega(x)-d_\Omega(y)|}{|x-y|^\beta}\right\}.
\]
For the first term on the right-hand side, we can just use the $1-$Lipschitz character of $d_\Omega$. For the second one, it is sufficient to use that $d_\Omega$ is bounded by $r_\Omega$. This gives
\[
[d_\Omega]_{C^{0,\beta}(\mathbb{R}^N)}=\max\left\{t^{1-\beta},\, \frac{2\,r_\Omega}{t^\beta}\right\},\qquad \mbox{ for every }t>0.
\]
By choosing $t=2\,r_\Omega$, we get the claimed estimate.
	\end{proof}
	\begin{lemma}
		\label{lm:controllo}
		Let $\Omega\subsetneq\mathbb{R}^N$ be an open set such that $d^\alpha_\Omega\in L^1(\mathbb{R}^N)$, for some $0<\alpha<\infty$. Then for every $R\ge r_\Omega/2$, we have
		\[
		d_\Omega(x)\le 2\,\left(\frac{1}{\omega_N}\,\int_{\mathbb{R}^N\setminus B_R} d_\Omega^\alpha\,dy\right)^\frac{1}{N+\alpha},\qquad \mbox{ for every }|x|> R+\frac{r_\Omega}{2}.
		\]
		In particular, $\Omega$ is quasibounded.
	\end{lemma}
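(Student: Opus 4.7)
The plan is to obtain the pointwise bound by exploiting the fact that, at any point $x \in \Omega$ with $d_\Omega(x) = r$, there is an inscribed ball $B_r(x) \subseteq \Omega$ on which the distance function itself is bounded below by a linear profile. If $|x|$ is far enough from the origin, this inscribed ball lives entirely outside $B_R$, and its contribution to $\int_{\mathbb{R}^N\setminus B_R} d_\Omega^\alpha\,dy$ forces $r$ to be small whenever the tail integral is small.

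First, I would apply Lemma \ref{lemma:r<infty} to conclude $r_\Omega < \infty$ from the hypothesis $d_\Omega^\alpha \in L^1(\mathbb{R}^N)$; this makes the quantity $r_\Omega/2$ appearing in the statement meaningful. Then fix $R \ge r_\Omega/2$ and $x \in \Omega$ with $|x| > R + r_\Omega/2$, and set $r := d_\Omega(x) \le r_\Omega$. By definition of the distance function, $B_r(x) \subseteq \Omega$, and by the triangle inequality every $y \in B_{r/2}(x)$ satisfies $B_{r/2}(y) \subseteq B_r(x) \subseteq \Omega$, hence $d_\Omega(y) \ge r/2$.

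Next, I would check that $B_{r/2}(x)$ is actually contained in $\mathbb{R}^N\setminus B_R$: for $y \in B_{r/2}(x)$,
\[
|y| \ge |x| - |y-x| > R + \frac{r_\Omega}{2} - \frac{r}{2} \ge R,
\]
where the last inequality uses $r \le r_\Omega$. Combining the last two observations and integrating the trivial bound $d_\Omega^\alpha \ge (r/2)^\alpha$ over the ball $B_{r/2}(x)$, I obtain
\[
\int_{\mathbb{R}^N\setminus B_R} d_\Omega^\alpha\,dy \;\ge\; \int_{B_{r/2}(x)} d_\Omega^\alpha\,dy \;\ge\; \omega_N\,\Big(\frac{r}{2}\Big)^{N+\alpha}.
\]
Solving this for $r = d_\Omega(x)$ gives precisely the pointwise inequality in the statement.

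For the final \emph{in particular} assertion, I would note that by the dominated (or monotone) convergence theorem, $d_\Omega^\alpha \in L^1(\mathbb{R}^N)$ implies $\int_{\mathbb{R}^N\setminus B_R} d_\Omega^\alpha\,dy \to 0$ as $R \to \infty$. Applied to the pointwise bound just proved, this yields $\|d_\Omega\|_{L^\infty(\mathbb{R}^N\setminus B_{R+r_\Omega/2})} \to 0$, which is exactly quasiboundedness of $\Omega$ after a harmless shift of the radius. There is no real obstacle here: the only mild point is to make sure the inscribed ball $B_{r/2}(x)$ lies outside $B_R$, which is precisely what the assumption $R \ge r_\Omega/2$ combined with $|x| > R + r_\Omega/2$ is designed to guarantee.
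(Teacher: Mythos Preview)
Your proof is correct and follows essentially the same approach as the paper: both take the ball $B_{d_\Omega(x)/2}(x)$, use the lower bound $d_\Omega \ge d_\Omega(x)/2$ on it, verify via the triangle inequality and $d_\Omega(x)\le r_\Omega$ that this ball lies in $\mathbb{R}^N\setminus B_R$, and then integrate. The only cosmetic difference is that the paper explicitly disposes of the trivial case $x\notin\Omega$ (where $d_\Omega(x)=0$), which you implicitly skip by writing ``fix $x\in\Omega$''.
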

	\begin{proof}
		Let $R\ge r_\Omega/2$ and let $x\in \mathbb{R}^N$ be such that $|x|>R+r_\Omega/2$. If $x\not\in\Omega$, then $d_\Omega(x)=0$ and there is nothing to prove. Let us suppose that $x\in\Omega$, so that $d_\Omega(x)>0$. We consider the ball
		\[
		B:=\Big\{y\in\mathbb{R}^N\, :\, |x-y|<\frac{d_\Omega(x)}{2}\Big\},
		\] 
		and observe that 
		\[
		d_\Omega(y)\ge \frac{1}{2}\,d_\Omega(x),\qquad \mbox{ for every } y\in B.
		\]
		We raise to the power $\alpha$ and integrate this inequality over $B$. This gives
		\[
		2^{-\alpha}\,d_\Omega(x)^\alpha\,|B|\le \int_{B} d_\Omega(y)^\alpha\,dy, 
		\]
		that is 
		\begin{equation}
			\label{dorme}
			\omega_N\,2^{-\alpha-N}\,d_\Omega(x)^{\alpha+N}\le \int_{B} d_\Omega(y)^\alpha\,dy.
		\end{equation}
		We then observe that 
		\[
		|y|\ge |x|-|y-x|> |x|-\frac{1}{2}\,d_\Omega(x)\ge |x|-\frac{r_\Omega}{2}> R,\qquad \mbox{ for every } y\in B.
		\]
		This gives that $B\subseteq\mathbb{R}^N\setminus B_{R}$.
		By using the previous inclusion in \eqref{dorme}, we get
		\[
		\omega_N\,2^{-\alpha-N}\,d_\Omega(x)^{\alpha+N}\le \int_{\mathbb{R}^N\setminus B_{R}} d_\Omega(y)^\alpha\,dy.
		\] 
		This concludes the proof.
	\end{proof}
		
	\begin{rem}
		The converse implication does not hold true: indeed, there exist quasibounded open sets for which $d^{\,\alpha}_\Omega\notin L^1(\mathbb{R}^N)$, for any $0<\alpha<\infty$ (see Example \ref{esempio}).
	\end{rem}
	
	We conclude this section with a generalized version of Ascoli-Arzel\`a Theorem, which is valid when $\Omega$ is a  quasibounded open set. This is probably well-known, but we have not been able to detect it in the literature. Thus, we include its proof.
	\begin{prop}\label{prop:AscArz} 
		Let $\Omega\subseteq \mathbb{R}^N$ be a quasibounded open set. 
		Let 
		\[
		\{u_n\}_{n\in\mathbb{N}}\subset \Big\{u\in C_{\rm bound}(\overline\Omega)\, :\, u=0 \mbox{ on }\partial\Omega\Big\}
				\] 
		be a sequence with the following properties:
		\begin{itemize}
		\item[(a)] there exists $M>0$ such that $\|u_n\|_{L^\infty(\Omega)}\le M$, for every $n\in\mathbb{N}$; 
		\vskip.2cm
		\item[(b)] there exist $\delta_0>0$ and a function $\omega:(0,\delta_0]\to (0,+\infty)$ such that
		\[
		\lim_{\delta\to 0^+} \omega(\delta)=0,
		\]
		and
		\[
		\omega_n(\delta):=\sup\Big\{|u_n(x)-u_n(y)|\, :\, x,y\in\overline\Omega,\ |x-y|\le \delta\Big\}\le \omega(\delta),\qquad \mbox{ for every } 0<\delta\le \delta_0.
		\]
		\end{itemize}  
		Then, there exists $\overline{u}\in C_{\rm bound}(\Omega)$ vanishing on $\partial\Omega$ such that
		\[
		\lim_{n\to\infty} \|u_n-\overline{u}\|_{L^\infty(\Omega)}=0,
		\]
		up to a subsequence. 
	\end{prop}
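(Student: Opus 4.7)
The obstacle compared to the classical Ascoli--Arzelà theorem is that $\overline{\Omega}$ need not be compact nor even bounded. The quasiboundedness of $\Omega$, together with the boundary condition $u_n\equiv 0$ on $\partial\Omega$ and the uniform modulus of continuity, should force the sequence to be uniformly small outside large balls, which is exactly the missing ``tightness'' needed to replace compactness of $\overline\Omega$.

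The plan is as follows. \textbf{Step 1 (smallness at infinity).} Fix $\varepsilon>0$ and, using $\lim_{\delta\to 0^+}\omega(\delta)=0$, pick $0<\delta\le \delta_0$ with $\omega(\delta)<\varepsilon$. By quasiboundedness, choose $R_\varepsilon>0$ so large that $d_\Omega(x)<\delta$ for every $x\in\Omega$ with $|x|>R_\varepsilon$. For any such $x$, select $y\in\partial\Omega$ with $|x-y|=d_\Omega(x)<\delta$ and write
\[
|u_n(x)|=|u_n(x)-u_n(y)|\le \omega_n(\delta)\le \omega(\delta)<\varepsilon,\qquad \text{for every }n\in\mathbb{N}.
\]
Hence $\|u_n\|_{L^\infty(\Omega\setminus B_{R_\varepsilon})}\le\varepsilon$ uniformly in $n$.

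\textbf{Step 2 (convergence on bounded pieces).} For each integer $k\ge 1$, the set $\overline{\Omega\cap B_k}$ is bounded and closed, hence compact. On such a set, the hypotheses $(a)$ and $(b)$ give a uniformly bounded, equicontinuous family (with common modulus $\omega$), so the classical Ascoli--Arzelà theorem yields a subsequence converging uniformly on $\overline{\Omega\cap B_k}$. A standard diagonal extraction produces a single subsequence, still denoted $\{u_n\}_{n\in\mathbb{N}}$, converging uniformly on $\overline{\Omega\cap B_k}$ for every $k\in\mathbb{N}$. Call $\overline u$ the pointwise limit on $\overline\Omega$; it is continuous on each $\overline{\Omega\cap B_k}$, hence continuous on $\overline\Omega$, and it inherits $\overline u\equiv 0$ on $\partial\Omega$ and $\|\overline u\|_{L^\infty(\Omega)}\le M$.

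\textbf{Step 3 (global uniform convergence).} Given $\varepsilon>0$, pick $R_\varepsilon$ as in Step 1. By Step 2 applied on $\overline{\Omega\cap B_{R_\varepsilon+1}}$, there is $n_\varepsilon$ such that $\|u_n-\overline u\|_{L^\infty(\Omega\cap B_{R_\varepsilon+1})}\le\varepsilon$ for $n\ge n_\varepsilon$. On the complement $\Omega\setminus B_{R_\varepsilon}$ we use Step 1 both for $u_n$ and, by passing to the limit, for $\overline u$, to obtain $\|u_n-\overline u\|_{L^\infty(\Omega\setminus B_{R_\varepsilon})}\le 2\varepsilon$. Combining the two estimates gives $\|u_n-\overline u\|_{L^\infty(\Omega)}\le 2\varepsilon$ for $n\ge n_\varepsilon$, which concludes the argument.

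The main technical point is Step 1: it is precisely there that quasiboundedness, the uniform modulus of continuity, and the boundary condition interact to replace the usual compactness of $\overline\Omega$. Once this ``uniform decay at infinity'' is established, the rest is a routine diagonal/Ascoli--Arzelà procedure.
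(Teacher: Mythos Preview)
Your proposal is correct and follows essentially the same approach as the paper: a diagonal Ascoli--Arzel\`a extraction on the compact pieces $\overline{\Omega}\cap\overline{B_k}$, combined with the observation that quasiboundedness, the uniform modulus of continuity, and the boundary condition force uniform smallness outside large balls. The only cosmetic difference is that you isolate the ``smallness at infinity'' as a separate Step~1 and then bound $|u_n-\overline u|\le|u_n|+|\overline u|\le 2\varepsilon$ there, whereas the paper estimates $|u_n(x)-\overline u(x)|=\lim_m|u_n(x)-u_m(x)|\le 2\,\omega(\delta)$ directly; the content is the same.
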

	
	\begin{proof} 
	Let us denote by $k_0\in\mathbb{N}$ the smallest natural number such that $\overline{\Omega}\cap \overline{B_k}$ is not empty. 
	 Thanks to the assumptions, $\{u_n\}_{n \in \mathbb{N}}$ is a bounded and equicontinuous sequence on the compact set $\overline{\Omega}\cap \overline{B_k}$, for every $k\ge k_0$. By applying the classical Ascoli-Arzel\'a Theorem for compact sets, together with a diagonal argument, we have that there exists a function $\overline{u} \in C(\overline{\Omega})$ such that, up to a subsequence, $u_n$ converges uniformly to $\overline{u}$ on $\overline{\Omega}\cap \overline{B_k}$, for every $k\ge k_0$. 
		\par
		We will show that $u_n$ converges to $\overline{u}$ uniformly on the whole $\overline\Omega$. 
		Let $0<\delta\le \delta_0$, since $\Omega$ is quasibounded, there exists $R_\delta>0$ such that
		\[
		d_{\Omega}(x) \le \delta, \qquad \mbox{ for } x\in \Omega\setminus B_{R_\delta}.
		\]
		For every $x \in \Omega \setminus B_{R_\delta}$, we take $y\in \partial{\Omega}$ such that $|x-y|=d_{\Omega}(x)$. By using property (b), the triangle inequality and the fact that $u_n(y)=0$, we get that for every $n\in\mathbb{N}$ 
		\[\begin{split}
			|u_n(x)-\overline{u}(x)| &= \lim_{m \to \infty} |u_n(x)-u_m(x)|\\
			&\le  |u_n(x)-u_n(y)|+\lim_{m\to\infty} |u_m(x)-u_m(y)|
			\le 2 \,\omega(d_{\Omega}(x))\le 2 \, \omega(\delta).
		\end{split}
		\]
		This implies that
		\[\begin{split} \|u_n - \overline{u}\|_{L^{\infty}(\Omega)} &=\max\Big\{ \|u_n - \overline{u}\|_{L^{\infty}(\Omega \setminus B_{R_\delta})}, \|u_n - \overline{u}\|_{L^{\infty}(\Omega\cap B_{R_\delta})}\Big\} \\
			&\le \max\Big\{2\, \omega(\delta), \|u_n - \overline{u}\|_{L^{\infty}(\Omega\cap B_{R_\delta})}\Big\}. 
		\end{split} \]
		By taking the limit as $n$ goes to $\infty$ and  exploiting the uniform convergence of $u_n$ to $\overline{u}$ on $\overline{\Omega}\cap \overline{B_{R_\delta}}$,  we obtain that
		\[
		\limsup_{n\to \infty} \|u_n - \overline{u}\|_{L^{\infty}(\Omega)} \le 2 \, \omega(\delta),\qquad \mbox{ for } 0<\delta\le \delta_0.
		\]
		By arbitrariness of $\delta$ and using the properties of $\omega$, we get
		\[
		\lim_{n\to \infty} \|u_n - \overline{u}\|_{L^{\infty}(\Omega)}=0.
		\]
		This also shows that $\overline{u}$ vanishes on $\partial\Omega$, as a uniform limit of functions with the same property. Moreover, using again that $C_{\rm bound}(\overline\Omega)$ is a Banach space, we finally get that $\overline{u}\in C_{\rm bound}(\overline\Omega)$, as well.
		This concludes the proof.
	\end{proof}

	\section{A Morrey--type inequality and its consequences}\label{sez4}
	
	We will need the following Morrey--type sharp constant.
	\begin{lemma}
		\label{lm:morreyA}
		Let $x_0\in\mathbb{R}^N$ and let $R>0$. For every $p>N$ and every fixed $z\in \partial B_R(x_0)$, we set 
		\[
		\mu_p(B_R(x_0);\{z\}):=\min_{\varphi\in W^{1,p}(B_R(x_0))}\left\{\int_{B_R(x_0)} |\nabla \varphi|^p\,dx\, :\, \varphi(x_0)=1 \mbox{ and } \varphi(z)=0 \right\}.
		\]
		Then:
		\begin{enumerate}
			\item the minimum above is independent of the point $z\in\partial B_R(x_0)$; 
			\vskip.2cm 
			\item we have the scaling 
			\[
			\mu_p(B_R(x_0);\{z\})=R^{N-p}\,\mu_p\left(B_1(x_0);\left\{\frac{z}{R}\right\}\right);
			\] 
			\item the family  
			\[ \left\{ \left(\frac{1}{\omega_N}\, \mu_p(B_R(x_0); \{z\}) \right)^{\frac{1}{p}} \right\}_{p>N}\] 
			is non-decreasing;
			\vskip.2cm
			\item we have the following asymptotics
			\begin{equation}\label{eq:limitemup}
				\lim_{p\to \infty} \Big(\mu_p(B_R(x_0);\{z\})\Big)^\frac{1}{p}=\frac{1}{R}.
			\end{equation}
		\end{enumerate} 
	\end{lemma}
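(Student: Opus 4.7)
The plan is to address the four items in order, with essentially all of the substance in (4).

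Items (1)--(3) are essentially formal. For (1), I would invoke the rotational symmetry of $B_R(x_0)$: given $z_1,z_2\in\partial B_R(x_0)$, any orthogonal linear map $O$ with $O(z_1-x_0)=z_2-x_0$ induces an affine isometry $T(x)=x_0+O(x-x_0)$ fixing $x_0$; the pullback $\varphi\mapsto \varphi\circ T$ bijects the two admissible classes and preserves $\int|\nabla(\cdot)|^p\,dx$, so the two minima coincide. For (2), I would apply the dilation $\tilde\varphi(y)=\varphi(x_0+R(y-x_0))$, which bijects admissible functions on $B_R(x_0)$ (vanishing at $z$) with admissible functions on $B_1(x_0)$ (vanishing at $x_0+(z-x_0)/R$), while the change of variable formula scales the $p$-Dirichlet functional by exactly $R^{p-N}$. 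For (3), Hölder's inequality with exponents $q/p$ and $q/(q-p)$ gives, for any admissible $\varphi$ at exponent $q>p$,
\[
\int_{B_R(x_0)}|\nabla\varphi|^p\,dx\le |B_R(x_0)|^{1-p/q}\left(\int_{B_R(x_0)}|\nabla\varphi|^q\,dx\right)^{p/q};
\]
taking the infimum yields $\mu_p\le |B_R(x_0)|^{1-p/q}\mu_q^{p/q}$, i.e.\ $p\mapsto (\mu_p/|B_R(x_0)|)^{1/p}$ is non-decreasing; combined with the scaling of (2), reducing to the case $R=1$ where $|B_1|=\omega_N$, this produces the stated monotonicity.

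For (4), the upper bound uses the explicit $(1/R)$-Lipschitz test function $\varphi_0(x)=(1-|x-x_0|/R)_+$, which is admissible and satisfies $|\nabla\varphi_0|=1/R$ almost everywhere; hence $\mu_p\le\omega_N R^{N-p}$, and $\limsup_{p\to\infty}\mu_p^{1/p}\le 1/R$. For the lower bound I would invoke the Morrey embedding $W^{1,p}(B_R(x_0))\hookrightarrow C^{0,1-N/p}(\overline{B_R(x_0)})$: for any admissible $\varphi$,
\[
1=|\varphi(x_0)-\varphi(z)|\le [\varphi]_{C^{0,1-N/p}(\overline{B_R(x_0)})}\,R^{1-N/p}\le C_{N,p}\,R^{1-N/p}\,\|\nabla\varphi\|_{L^p(B_R(x_0))},
\]
where $C_{N,p}$ denotes the sharp Morrey constant for the ball, independent of $R$ by the scale invariance of both sides under $\varphi\mapsto \varphi(\lambda\cdot)$. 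Taking the infimum over $\varphi$ and raising to the $1/p$-th power produces $\mu_p^{1/p}\ge C_{N,p}^{-1}R^{N/p-1}$.

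The main obstacle is controlling the asymptotic behaviour of $C_{N,p}$: to conclude $\liminf\mu_p^{1/p}\ge 1/R$ one needs $C_{N,p}^{1/p}\to 1$ as $p\to\infty$, which must then be combined with $R^{N/p-1}\to 1/R$. This should follow from standard explicit bounds for the Morrey constant on the ball, which grow at most polynomially in $p$ and in $1/(p-N)$, so that their $p$-th root is $1+o(1)$. A cleaner alternative is to bypass the sharp-constant discussion entirely: use the monotonicity in (3) to guarantee that the limit exists and is bounded above by $1/R$ (via the test-function upper bound), and then identify the limit by a direct one-shot capacitary estimate on $B_1(x_0)$, letting $p\to\infty$ only at the end.
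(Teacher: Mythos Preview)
Your treatment of parts (1)--(3) and the upper bound in (4) matches the paper's: rotations, dilations, H\"older, and the test function $(1-|x-x_0|/R)_+$ (which is exactly $d_{B_R(x_0)}$, the function the paper uses).

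For the lower bound in (4) you take a genuinely different route. The paper does \emph{not} invoke an external Morrey inequality with tracked constants. Instead it takes the minimizers $U_p$, shows via H\"older and the paper's own Poincar\'e inequality (Proposition~\ref{prop:poincare}) that $\{U_p\}$ is bounded in $W^{1,p_0}(B_1)$ for each fixed $p_0>N$, extracts a weak limit $U_\infty$, and proves $\|\nabla U_\infty\|_{L^\infty(B_1)}\le \liminf_{p\to\infty}\mu_p(B_1)^{1/p}$. Since $U_\infty(0)=1$ and $U_\infty(z)=0$ with $|z|=1$, the Lipschitz bound forces $\|\nabla U_\infty\|_{L^\infty}\ge 1$, closing the argument. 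Your approach is more elementary in that it avoids the compactness machinery, but it outsources the work to the claim that standard explicit Morrey constants $C_{N,p}$ satisfy $C_{N,p}^{1/p}\to 1$; this is true (the textbook potential-estimate proof gives $C_{N,p}\le C(N)\,p/(p-N)$, whose $p$-th root tends to $1$), but you should state and justify such an explicit bound rather than leave it as ``should follow''. Note also that you must use a non-sharp explicit constant here: in the paper's logical order, the sharp Morrey constant $\mathfrak{m}_p$ is analysed in Corollary~\ref{cor:morrey} \emph{using} Lemma~\ref{lm:morreyA}, so appealing to the sharp constant would be circular. The paper's compactness argument, by contrast, is fully self-contained. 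Your final ``cleaner alternative'' via a one-shot capacitary estimate is too vague to assess.
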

	\begin{proof}
		We first show that $\mu_p\left(B_R(x_0);\{z\}\right)$ is actually a minimum. To this aim, we consider a minimizing sequence $\{u_n\}_{n \in \mathbb{N}} \subset W^{1,p}(B_R(x_0))$ for the problem defined by $\mu_p\left(B_R(x_0);\{z\}\right)$, i.\,e.
		\[ \lim_{n \to \infty} \int_{B_R(x_0)} |\nabla u_n|^p \, dx = \mu_p\left(B_R(x_0);\{z\}\right), \quad u_n(x_0)=1 \quad \mbox{ and } \quad u_n(z)=0. 
		\]
		By Proposition \ref{prop:poincare}, we also have that
		\[ 
		\int_{B_R(x_0)} |u_n|^p \, dx \le C_{N,p} \, R^p \, \int_{B_R(x_0)} |\nabla u_n|^p \, dx. 
		\]
		This implies that $\{u_n\}_{n \in \mathbb{N}}$ is a bounded sequence in $W^{1,p}(B_R(x_0)).$ Hence, thanks to compact Sobolev embeddings for $p>N$, there exists $u \in W^{1,p}(B_R(x_0))\cap C(\overline{B_R(x_0)})$ such that $u_n$ converges to $u$  weakly in $W^{1,p}(B_R(x_0))$ and uniformly in $\overline{B_R(x_0)}$, up to a subsequence.
		This implies that $u(x_0)=1$ and $u(z)=0$, thus $u$ is an admissible trial function for the problem $\mu_p\left(B_R(x_0);\{z\}\right)$. Moreover, by lower semicontinuity we have 
		\[ 
		\int_{B_R(x_0)} |\nabla u|^p \, dx \le \lim_{n \to \infty} \int_{B_R(x_0)} |\nabla u_n|^p \, dx = \mu_p\left(B_R(x_0);\{z\}\right), 
		\]
		i.\,e. $u$ is a minimizer.
		\par
		The proof of \noindent{\it part (1) and part (2)} easily follow by standard arguments, while \noindent{\it part (3)} is a consequence of H\"older's inequality. 
		\par
		It remains to prove \noindent{\it part (4)}. We suppose without loss of generality that $x_0=0$ and $R=1$. Thus, we have to show that 
		\[ 
		\lim_{p \to \infty} \Big(\mu_p\left(B_1;\left\{z\right\}\right)\Big)^{\frac{1}{p}} = 1.
		\]
		We observe that $d_{B_1}$ is admissible for the problem $\mu_p\left(B_1;\left\{z\right\}\right)$, thus we get that
		\[ 
		\Big(\mu_p\left(B_1;\left\{z\right\}\right)\Big)^{\frac{1}{p}} \le \omega_N^{\frac{1}{p}}. 
		\]
		This in turn implies that
		\begin{equation}\label{eq:boundmup} \limsup_{p \to \infty}\Big(\mu_p\left(B_1;\left\{z\right\}\right)\Big)^{\frac{1}{p}}\le \lim_{p \to \infty} \omega_N^{\frac{1}{p}} = 1. 
		\end{equation}
		We now prove that
		\[ \liminf_{p \to \infty}\Big(\mu_p\left(B_1;\left\{z\right\}\right)\Big)^{\frac{1}{p}}\ge  1. 
		\]
		At this aim, we consider $U_p \in W^{1,p}(B_1)$ a minimizer for $\mu_p\left(B_1;\left\{z\right\}\right)$, i.\,e.
		\[ \left(\int_{B_1} |\nabla U_p|^p \, dx\right)^{\frac{1}{p}} = \Big(\mu_p\left(B_1;\left\{z\right\}\right)\Big)^{\frac{1}{p}}, \qquad U_p(0)=1, \qquad \mbox{ and } \qquad U_p(z)=0. \]
		By applying Holder's inequality, we have
		\[ \left(\int_{B_1} |\nabla U_p|^{p_0} \, dx\right)^{\frac{1}{p_0}} \le \omega_N^{\frac{1}{p_0}-\frac{1}{p}} \left(\int_{B_1} |\nabla U_p|^p \, dx\right)^{\frac{1}{p}}, \]
		for every $p>p_0>N$. 
		Moreover, thanks again to Proposition \ref{prop:poincare}, we have
		\[ \int_{B_1} |U_p|^{p_0} \, dx \le C_{N,p} \, \int_{B_1} |\nabla U_p|^{p_0} \, dx.\]
		Taking into account \eqref{eq:boundmup}, the above inequality 
		implies  that the sequence $\{U_p\}_{p>N}$ is bounded in $W^{1,p_0}(B_1)$ for every fixed $p_0>N$. Then, there exists $U_{\infty} \in W^{1,p_0}(B_1)\cap C(\overline{B_1})$ such that $\{U_p\}_{p>N}$ converges to $U_{\infty} $  weakly in $W^{1,p_0}(B_1)$ and uniformly in $\overline{B_1}$, up to taking a sequence.
		Thanks to a standard argument, we have that $\{U_p\}_{p>N}$ converges to $U_{\infty}$ weakly in $W^{1,q}(B_1)$ for every $p_0\le q<\infty$. Thanks to the claimed convergence, we get that
		\[ 
		\begin{split}
			\left(\int_{B_1} |\nabla U_\infty|^{q} \, dx\right)^{\frac{1}{q}} &\le \liminf_{p \to \infty} \left(\int_{B_1} |\nabla U_p|^{q} \, dx\right)^{\frac{1}{q}} \\ 
			&\le \liminf_{p \to \infty} \, \omega_N^{\frac{1}{q}-\frac{1}{p}} \left(\int_{B_1} |\nabla U_p|^p \, dx\right)^{\frac{1}{p}} \\
			&= \omega_N^{\,\frac{1}{q}}\, \liminf_{p \to \infty} \Big(\mu_p(B_1; \{z\})\Big)^{\frac{1}{p}}, \qquad \mbox{ for every } q \ge p_0,
		\end{split}
		\]
		and, by sending $q$ to $\infty$ and recalling \eqref{eq:boundmup}, it holds that
		\begin{equation}\label{eq:liminfmup}  1\ge \liminf_{p \to \infty} \Big(\mu_p(B_1; \{z\})\Big)^{\frac{1}{p}} \ge \|\nabla U_\infty\|_{L^{\infty}(B_1)} .
		\end{equation}
		Hence $U_{\infty}$ is a $1-$Lipschitz continuous function. Accordingly, we get
		\[\|\nabla U_\infty\|_{L^{\infty}(B_1)} \ge  \frac{|U_{\infty}(0)|}{d_{B_1}(0)}=1,\]
		which, combined with \eqref{eq:liminfmup}, gives the conclusion.
	\end{proof}
	
	\begin{rem}
		\label{rem:morreytype}
		Thanks to Lemma \ref{lm:morreyA} {\it part (1)}, fixed a point $z\in\partial B_1$, we can  define \begin{equation}
		\label{def:mupB1}	
		\mu_p(B_1):=\min_{u\in W^{1,p}(B_1)}\left\{\int_{B_1} |\nabla u|^p\,dx\, :\, u(0)=1 \mbox{ and } u(z)=0 \right\}.
		\end{equation}
		Moreover, as an easy consequence of Lemma \ref{lm:morreyA}, when $p>N$ we get the following estimate 
		\begin{equation}\label{eq:morreylemma4.1}
			|u(x_0)-u(z)| \le \frac{R^{1-\frac{N}{p}}}{\Big(\mu_p(B_1)\Big)^{\frac{1}{p}}}\, \|\nabla u\|_{L^p(B_R(x_0))},\quad \mbox{ for } u\in W^{1,p}(B_R(x_0)) \mbox{ and  } z\in\partial B_R(x_0).
		\end{equation}
	\end{rem}
	
		\begin{cor}[Sharp Morrey constant]
		\label{cor:morrey} Let $N<p<\infty$ and let $\Omega\subseteq \mathbb{R}^N$ be an open set. We define the sharp Morrey constant
		\[
		\mathfrak{m}_{p}(\Omega):=\inf_{u\in C_0^{\infty}(\Omega)} \left\{\int_\Omega |\nabla u|^p\,dx\, :\, [u]_{C^{0,\alpha_p}(\overline\Omega)}=1 \right\},\qquad \mbox{ where }\alpha_p:=1-\frac{N}{p}.
		\]
		Then the constant $\mathfrak{m}_p(\Omega)$ is independent of $\Omega$, i.e. we have
		\[
		\mathfrak{m}_p(\Omega)=\mathfrak{m}_p(\mathbb{R}^N).
		\]
		Moreover, we have
		\begin{equation}\label{eq:mp}
			\mu_p(B_1)\le \mathfrak{m}_{p}(\mathbb{R}^N)\le N\,\omega_N\,\left(\frac{p-N}{p-1}\right)^{p-1},
		\end{equation}
		and 
		\[ 
		\lim_{p\to \infty} \Big(\mathfrak{m}_{p}(\mathbb{R}^N)\Big)^\frac{1}{p}=1.
		\]
	\end{cor}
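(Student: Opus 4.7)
The plan is to establish the four assertions in turn. For \emph{independence}, the key observation is that the choice $\alpha_p = 1-N/p$ renders the quotient $\int_\Omega |\nabla u|^p\,dx/[u]^p_{C^{0,\alpha_p}(\overline\Omega)}$ invariant under the dilation $u(x)\mapsto u((x-x_0)/t)$: a direct change of variables shows both numerator and denominator scale as $t^{N-p}$, since $\alpha_p\cdot p = p-N$. Combined with identity \eqref{eq:normeHold} and the extension-by-zero inclusion $C^\infty_0(\Omega)\hookrightarrow C^\infty_0(\mathbb{R}^N)$, this gives $\mathfrak{m}_p(\mathbb{R}^N)\le\mathfrak{m}_p(\Omega)$. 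Conversely, since $\Omega$ is a nonempty open set, any $u\in C^\infty_0(\mathbb{R}^N)$ can be translated and dilated so that its support lies inside some ball $B_r(x_0)\subset\Omega$, producing a competitor in $C^\infty_0(\Omega)$ with the same quotient. For the \emph{lower bound}, given $u\in C^\infty_0(\Omega)$ extended by zero to $\mathbb{R}^N$, I apply \eqref{eq:morreylemma4.1} with $x_0=x$, $z=y$, $R=|x-y|$ for arbitrary $x\ne y$ and bound $\|\nabla u\|_{L^p(B_R(x))}\le\|\nabla u\|_{L^p(\Omega)}$; taking supremum and using \eqref{eq:normeHold} yields $\mu_p(B_1)^{1/p}\,[u]_{C^{0,\alpha_p}(\overline\Omega)}\le\|\nabla u\|_{L^p(\Omega)}$.

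For the \emph{upper bound}, I use the explicit test function $u(x)=(1-|x|^\gamma)_+$ with $\gamma=(p-N)/(p-1)\in(0,1]$. Spherical coordinates give
\[
\int_{\mathbb{R}^N}|\nabla u|^p\,dx = N\omega_N\,\gamma^p\int_0^1 r^{(\gamma-1)p+N-1}\,dr = N\omega_N\,\left(\frac{p-N}{p-1}\right)^{p-1},
\]
using $(\gamma-1)p+N-1=-(N-1)/(p-1)$, which is integrable near $r=0$ exactly because $p>N$. To show $[u]_{C^{0,\alpha_p}(\mathbb{R}^N)}=1$, I argue by cases. If $x,y\in\overline{B_1}$, the elementary bound $||x|^\gamma-|y|^\gamma|\le|x-y|^\gamma$ gives a ratio $\le|x-y|^{\gamma-\alpha_p}\le 1$ when $|x-y|\le 1$ (since $\gamma-\alpha_p=(p-N)/(p(p-1))>0$), while the trivial bound $|u(x)-u(y)|\le 1$ handles $|x-y|>1$. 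If $x\in\overline{B_1}$ and $y\notin\overline{B_1}$, I use $|x-y|\ge 1-|x|$ together with the subadditivity $1-|x|^\gamma\le(1-|x|)^\gamma$ (from $(a+b)^\gamma\le a^\gamma+b^\gamma$ applied with $a=1-|x|$, $b=|x|$) to get ratio $\le(1-|x|)^{\gamma-\alpha_p}\le 1$; the remaining case $x,y\notin\overline{B_1}$ is trivial. Equality is attained at $x=0$, $|y|=1$. Finally, the mollification $u_\varepsilon=\rho_\varepsilon*u$ is in $C^\infty_0(\mathbb{R}^N)$ and satisfies $[u_\varepsilon]_{C^{0,\alpha_p}}\le[u]_{C^{0,\alpha_p}}=1$ (convolution is a weighted average of translates), while $[u_\varepsilon]_{C^{0,\alpha_p}}\ge u_\varepsilon(0)/(1+\varepsilon)^{\alpha_p}\to 1$ (since $u_\varepsilon\equiv 0$ outside $B_{1+\varepsilon}$ and $u_\varepsilon(0)\to u(0)=1$); combined with $\|\nabla u_\varepsilon\|_{L^p}\to\|\nabla u\|_{L^p}$, this yields the claimed upper bound on $\mathfrak{m}_p(\mathbb{R}^N)$.

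The \emph{asymptotic} then follows by raising \eqref{eq:mp} to the $1/p$ power: the lower bound $\mu_p(B_1)^{1/p}\to 1$ is provided by Lemma \ref{lm:morreyA}, while the upper bound $(N\omega_N)^{1/p}\bigl((p-N)/(p-1)\bigr)^{(p-1)/p}\to 1$ holds since $(p-N)/(p-1)\to 1$ and $(p-1)/p\to 1$. The main obstacle is the case-by-case verification of $[u]_{C^{0,\alpha_p}(\mathbb{R}^N)}=1$ for the explicit test function — particularly the subadditivity argument for pairs straddling $\partial B_1$ — together with controlling the Hölder seminorm of the mollified approximants in the limit.
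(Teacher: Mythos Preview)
Your proof is correct and follows essentially the same strategy as the paper for independence, the lower bound, and the asymptotic. The one genuine difference lies in the upper bound: you prove the \emph{exact} value $[u]_{C^{0,\alpha_p}(\mathbb{R}^N)}=1$ for the test function $u(x)=(1-|x|^\gamma)_+$ via a case-by-case analysis, then mollify and control both sides of $[u_\varepsilon]_{C^{0,\alpha_p}}$ explicitly. The paper instead notices that only the trivial lower bound $[u]_{C^{0,\alpha_p}(\overline{B_1})}\ge |u(0)|=1$ is needed (testing $x=0$ against $y\in\partial B_1$), and then passes to $W^{1,p}_0(B_1)$ by density, using Morrey's inequality itself to guarantee $[u_n]_{C^{0,\alpha_p}}\to[u]_{C^{0,\alpha_p}}$ along the approximating sequence. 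Your route is more laborious but yields the sharper information that the H\"older seminorm of the extremal candidate is exactly $1$; the paper's route is quicker since the case analysis for pairs straddling $\partial B_1$ is entirely avoided.
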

	\begin{proof} 
		We first show that $\mathfrak{m}_p(\Omega)$ is independent of $\Omega$. The fact that $\mathfrak{m}_p(\mathbb{R}^N) \le \mathfrak{m}_p(\Omega)$ follows by monotonicity with respect to sets inclusion and the fact that 
		\[
			[u]_{C^{0,\alpha_p}(\overline\Omega)}= [u]_{C^{0,\alpha_p}(\mathbb{R}^N)},\qquad \mbox{ for every } u\in C^\infty_0(\Omega),
	\]
	see \eqref{eq:normeHold}.\par
		In order to show that $\mathfrak{m}_p(\Omega) \le \mathfrak{m}_p(\mathbb{R}^N)$, let $u \in C^{\infty}_0(\mathbb{R}^N)$ and let 
		\[
		u_r(x)=u\left(\frac{x-x_0}{r}\right),\qquad \mbox{ with } x_0\in \mathbb{R}^N \mbox{ and } r>0.
		\] 
		Since $u$ has compact support, we have $u_r \in C^{\infty}_0(\Omega)$ for some suitable $x_0$ and $r$ small enough. Then, by scaling and thanks to \eqref{eq:normeHold}, it holds
		\[ 
		\mathfrak{m}_p(\Omega) \le \dfrac{\displaystyle\int_{\Omega} |\nabla u_r|^p \, dx}{[u_r]^p_{C^{0,\alpha_p}(\overline\Omega)}} = \dfrac{\displaystyle\int_{\mathbb{R}^N} |\nabla u_r|^p \, dx }{[u_r]^p_{C^{0,\alpha_p}(\mathbb{R}^N)}} = \dfrac{\displaystyle\int_{\mathbb{R}^N} |\nabla u|^p \, dx }{[u]^p_{C^{0,\alpha_p}(\mathbb{R}^N)}}. 
		\]
		By taking the infimum on $C^{\infty}_0(\mathbb{R}^N)$ on the right-hand side, we get the claimed inequality. 
		\vskip.2cm\noindent
		We now come to the proof of \eqref{eq:mp}. Let $u\in C_0^{\infty}(\mathbb{R}^N)$, for the lower bound it is sufficient to prove that
		\begin{equation}\label{eq:morrey1*}
			|u(x)-u(y)|\le \frac 1 {\Big(\mu_p(B_1)\Big)^{\frac{1}{p}}}\,\|\nabla u\|_{L^p(\Omega)}\, |x-y|^{\alpha_p}, \qquad \mbox{ for every } x,y\in\mathbb{R}^N. 
		\end{equation}
		If $x=y$, then \eqref{eq:morrey1*} trivially holds, thus let us assume that $x\not =y$. Without loss of generality, we assume $u(x)>u(y)$ and we define 
		\[
		v(z):=\dfrac{u(z)-u(y)}{u(x)-u(y)},\qquad \mbox{ for } z\in\mathbb{R}^N,
		\] 
		which satisfies $v(x)=1$ and $v(y)=0$. Since $v\in W^{1,p}(B_R(x))$ with $R=|x-y|$, we have from \eqref{eq:morreylemma4.1} that 
		\[
		1 = |v(x)| \le \frac{|x-y|^{\alpha_p}}{\Big(\mu_p(B_1)\Big)^{\frac{1}{p}}}\, \|\nabla v\|_{L^p(B_R(x))} = \frac{|x-y|^{\alpha_p}}{\Big(\mu_p(B_1)\Big)^{\frac{1}{p}}} \, \frac{1}{|u(x)-u(y)|} \, \|\nabla u\|_{L^p(B_R(x))}.
		\]
		From this estimate, we get  
		\[
		|u(x)-u(y)|\le \frac{|x-y|^{\alpha_p}}{\Big(\mu_p(B_1)\Big)^{\frac{1}{p}}}\, \|\nabla u\|_{L^p(B_R(x))}\le \frac{|x-y|^{\alpha_p}}{\Big(\mu_p(B_1)\Big)^{\frac{1}{p}}} \,\|\nabla u\|_{L^p(\mathbb{R}^N)}, 
		\]
		which is the claimed inequality \eqref{eq:morrey1*}.
		\par
		As for the upper bound, for every $u\in C^\infty_0(B_1)\setminus\{0\}$, by the first part of the proof and the very definition of $\mathfrak{m}_p$, we have
		\[
		\mathfrak{m}_p(\mathbb{R}^N)=\mathfrak{m}_p(B_1)\le \frac{\|\nabla u\|_{L^p(B_1)}^p}{[u]^p_{C^{0,\alpha_p}(B_1)}}.
		\]
		Moreover, by using that $u$ is compactly supported in $B_1$, we have
		\[
		[u]_{C^{0,\alpha_p}(B_1)}\ge \sup_{x\in B_1, y\in \partial B_1} \frac{|u(x)|}{|x-y|^{\alpha_p}}\ge |u(0)|.
		\]
		Thus, for every $u\in C^\infty_0(B_1)$ such that $|u(0)|\not =0$, we get 
		\[
		\mathfrak{m}_p(\mathbb{R}^N)\le \frac{\|\nabla u\|^p_{L^p(B_1)}}{|u(0)|^p}.
		\]
		By density, the last estimate is still true for functions $u\in W^{1,p}_0(B_1)$. Now we consider the function 
		\[ u(x)=(1-|x|^{\frac{p-N}{p-1}}) \in W^{1,p}_0(B_1),\] 
		hence there exists $u_n\in C_0^{\infty}(B_1)$ such that $u_n$ converges to $u$ in $W^{1,p}_0(B_1)$.
		Since
		\[ 
		[u]_{C^{0,\alpha_p}(\overline{B_1})} = \sup_{x\ne y, x,y\in \overline{B_1} } \frac{\left||x|^{\frac{p-N}{p-1}}-|y|^{\frac{p-N}{p-1}}\right|}{|x-y|^{\alpha_p}} \ge 1, 
		\]
		it holds that
		\[ 
		\begin{split}
			\Big(\mathfrak{m}_{p}(\mathbb{R}^N)\Big)^{\frac{1}{p}} \le \liminf_{n\to \infty} \frac{\|\nabla u_n\|_{L^p(\mathbb{R}^N)}}{ [u_n]_{C^{0,\alpha_p}(\mathbb{R}^N)}}& \le \limsup_{n\to \infty} \frac{(N \,\omega_N)^\frac{1}{p} \, \left( \dfrac{p-N}{p-1} \right)^{\frac{p-1}{p}}}{[u_n]_{C^{0,\alpha_p}(\overline{B_1})}} \le (N \,\omega_N)^\frac{1}{p} \,\left( \dfrac{p-N}{p-1} \right)^{\frac{p-1}{p}}. 
		\end{split}
		\]
		This shows the claimed upper bound.
		\vskip.2cm\noindent
		Finally, by taking the $p-$rooth in \eqref{eq:mp} and using \eqref{eq:limitemup}, we get the desired asymptotics for $\mathfrak{m}_p$.
	\end{proof}
	The following Hardy inequality for general open sets was originally proved for $q=p$ in \cite{Lewis} (see also \cite{Ha} and \cite{Wa}), without determination of an explicit constant. The latter can be found in \cite{Av, Ch}.
	We generalize the result to cover the case $p\le q\le \infty$. We will pay due attention to the asymptotic behaviour of the sharp constant, as $p$ goes to $\infty$.
\begin{thm}[Hardy's inequality] 
\label{teo:hardyineq}
	Let $N < p\le q \le \infty$ and let $\Omega\subsetneq \mathbb{R}^N$  be an open set. We set
	\[
	\mathfrak{h}_{p,q}(\Omega)=\inf_{u\in C^\infty_0(\Omega)}\left\{\int_\Omega |\nabla u|^p\,dx\, :\, \left\|\frac{u}{d_\Omega^{\frac{N}{q}+\frac{p-N}{p}}}\right\|_{L^q(\Omega)}=1\right\},\qquad \mbox{ for } p<q\le \infty,
	\]
and
	\[
	\mathfrak{h}_{p}(\Omega)=\inf_{u\in C^\infty_0(\Omega)}\left\{\int_\Omega |\nabla u|^p\,dx\, :\, \left\|\frac{u}{d_\Omega}\right\|_{L^p(\Omega)}=1\right\}.
	\]
We have that
		\begin{equation}\label{lowerboundhardy}
\mathfrak{h}_{p,q}(\Omega) \ge \Big(\mathfrak{h}_p(\Omega)\Big)^\frac{p}{q}\,\Big(\mathfrak{h}_{p,\infty}(\Omega)\Big)^\frac{q-p}{q},\qquad \mbox{ for } p<q<\infty,
\end{equation}
and
	\begin{equation}\label{lowerboundhardyext}
	\mathfrak{h}_p(\Omega)\ge \left(\frac{p-N}{p}\right)^p,\qquad \mathfrak{h}_{p,\infty}(\Omega)\ge \mu_p(B_1),
	\end{equation}	
	where $\mu_p(B_1)$ is the same constant as in \eqref{def:mupB1}.
Moreover, it holds
\begin{equation}
		\label{hardyinfty}
		\lim_{p\to \infty} \Big(\mathfrak{h}_{p}(\Omega)\Big)^{\frac{1}{p}}=\lim_{p\to \infty} \Big(\mathfrak{h}_{p,\infty}(\Omega)\Big)^{\frac{1}{p}}=1.
	\end{equation}
\end{thm}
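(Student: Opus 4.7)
The plan is to prove the three assertions in order: the interpolation inequality \eqref{lowerboundhardy}, the base estimates \eqref{lowerboundhardyext}, and the asymptotic statement \eqref{hardyinfty}.

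For \eqref{lowerboundhardy}, I would first verify by a direct exponent computation that, for every $u\in C_0^\infty(\Omega)$, the pointwise factorisation
\[
\frac{|u|^q}{d_\Omega^{N+(p-N)q/p}} = \left(\frac{|u|}{d_\Omega}\right)^{p}\left(\frac{|u|}{d_\Omega^{(p-N)/p}}\right)^{q-p}
\]
holds, since $p+(p-N)(q-p)/p = N+(p-N)q/p$. Bounding the second factor by its $L^\infty(\Omega)$ norm, integrating, and inserting the definitions of $\mathfrak{h}_p(\Omega)$ and $\mathfrak{h}_{p,\infty}(\Omega)$ on the two remaining pieces yields
\[
\left\|\frac{u}{d_\Omega^{N/q+(p-N)/p}}\right\|_{L^q(\Omega)}^{q} \le \mathfrak{h}_{p,\infty}(\Omega)^{-(q-p)/p}\,\mathfrak{h}_p(\Omega)^{-1}\,\|\nabla u\|_{L^p(\Omega)}^{\,q},
\]
from which \eqref{lowerboundhardy} follows by raising to the power $p/q$ and taking the infimum.

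For \eqref{lowerboundhardyext}, the bound $\mathfrak{h}_p(\Omega)\ge ((p-N)/p)^p$ is the classical Hardy inequality for general open sets when $p>N$, due to Lewis and Wannebo (existence) and to Avkhadiev--Chernyavski (sharp constant), as recalled in the introduction, so I would quote it. For $\mathfrak{h}_{p,\infty}(\Omega)\ge \mu_p(B_1)$ I would apply the Morrey estimate \eqref{eq:morreylemma4.1} on the ball $B_{d_\Omega(x)}(x)\subset\Omega$, selecting $z\in\partial\Omega$ realising the distance to $x$ so that $z\in\partial B_{d_\Omega(x)}(x)$ and $u(z)=0$ by the continuous extension of $u\in C_0^\infty(\Omega)$. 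This produces the pointwise bound
\[
\frac{|u(x)|}{d_\Omega(x)^{(p-N)/p}} = \frac{|u(x)-u(z)|}{d_\Omega(x)^{(p-N)/p}} \le \frac{\|\nabla u\|_{L^p(\Omega)}}{\mu_p(B_1)^{1/p}},
\]
and the desired estimate follows by taking the supremum over $x\in\Omega$.

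For \eqref{hardyinfty}, the liminf bounds $\liminf_{p\to\infty}\mathfrak{h}_p(\Omega)^{1/p}\ge 1$ and $\liminf_{p\to\infty}\mathfrak{h}_{p,\infty}(\Omega)^{1/p}\ge 1$ are immediate consequences of \eqref{lowerboundhardyext}, the latter combined with \eqref{eq:limitemup}. For the matching limsup bounds I would fix any ball $B_r(x_0)\subset\Omega$ (available since $\Omega$ is nonempty and open) and use the cone $\varphi(x)=(1-|x-x_0|/r)_+$, extended by zero outside $B_r(x_0)$ and approximated by $C_0^\infty(\Omega)$ functions through standard mollification on a slightly shrunken support. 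The $1$-Lipschitz property of $d_\Omega$ together with $d_\Omega(x_0)\ge r$ forces $d_\Omega(x)\ge r\,\varphi(x)$ on $B_r(x_0)$, while $|\nabla\varphi|=1/r$ there. A direct evaluation of the Rayleigh quotients then produces $\mathfrak{h}_p(\Omega)\le 1$ and $\mathfrak{h}_{p,\infty}(\Omega)\le \omega_N$ (the $L^\infty$ norm of $\varphi/d_\Omega^{(p-N)/p}$ being attained at $x_0$ and equal to $r^{-(p-N)/p}$). Taking $p$-th roots gives $\limsup_{p\to\infty}\le 1$ in both cases, which closes \eqref{hardyinfty}.

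The only technical point is the mollification step turning the Lipschitz cone into a genuine $C_0^\infty(\Omega)$ competitor while preserving the relevant ratios in the limit; this is routine and I do not expect any real obstruction beyond that.
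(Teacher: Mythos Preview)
Your overall strategy coincides with the paper's: the same pointwise factorisation for \eqref{lowerboundhardy}, the same citations for the $q=p$ bound in \eqref{lowerboundhardyext}, the same application of \eqref{eq:morreylemma4.1} on $B_{d_\Omega(x)}(x)$ for the $q=\infty$ bound, and the same cone trial function for the limsup in \eqref{hardyinfty}. Two points need correction, one minor and one substantive.

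The minor one: for the $\mathfrak{h}_{p,\infty}$ computation you must take $r=d_\Omega(x_0)$, not ``any ball $B_r(x_0)\subset\Omega$''. If $r<d_\Omega(x_0)$ then the value at $x_0$ is $d_\Omega(x_0)^{-(p-N)/p}$, not $r^{-(p-N)/p}$, and the resulting upper bound $\omega_N\,(d_\Omega(x_0)/r)^{p-N}$ blows up as $p\to\infty$. The paper explicitly takes $r=d_\Omega(x_0)$.

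The substantive one: the claim $\mathfrak{h}_p(\Omega)\le 1$ does not follow from your test function. Your inequality $d_\Omega\ge r\varphi$ yields $\varphi/d_\Omega\le 1/r$, hence
\[
\left\|\frac{\varphi}{d_\Omega}\right\|_{L^p(\Omega)}^p\le \omega_N\,r^{N-p}=\|\nabla\varphi\|_{L^p(\Omega)}^p,
\]
so the Rayleigh quotient is $\ge 1$, not $\le 1$ (with equality only when $d_\Omega=r\varphi$ on $B_r(x_0)$, i.e.\ essentially when $\Omega$ is a ball). What the paper does instead --- and what rescues your conclusion --- is to bound only the $p$-th root and pass to the limit: with $r=d_\Omega(x_0)$,
\[
\Big(\mathfrak{h}_p(\Omega)\Big)^{1/p}\le \frac{\|\nabla\varphi\|_{L^p(\Omega)}}{\|\varphi/d_\Omega\|_{L^p(\Omega)}}\xrightarrow[p\to\infty]{}\frac{\|\nabla\varphi\|_{L^\infty(\Omega)}}{\|\varphi/d_\Omega\|_{L^\infty(\Omega)}}=\frac{1/r}{1/r}=1,
\]
using that $\varphi/d_\Omega$ is bounded and supported on a set of finite measure, so $\|\cdot\|_{L^p}\to\|\cdot\|_{L^\infty}$, and that $\|\varphi/d_\Omega\|_{L^\infty}$ attains the value $1/r$ at $x_0$. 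Thus the limsup statement is correct, but via a limit argument rather than a uniform-in-$p$ bound.
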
 
\begin{proof}
We first prove the lower bound in the extremal case, i.e. for $q=\infty$.
Let $x\in \Omega$ and let $\overline{x}\in\partial\Omega$ be such that 
\[
|x-\overline{x}|=d_\Omega(x).
\]
For every $u\in C^\infty_0(\Omega)$ and every $p>N$, we thus get
\begin{equation}
\label{base}
|u(x)|\le \frac{d_\Omega(x)^{1-\frac{N}{p}}}{(\mu_p(B_1))^\frac{1}{p}}\,\left(\int_{B_{d_\Omega(x)}(x)} |\nabla u|^p\,dx\right)^\frac{1}{p}.
\end{equation}
By taking the supremum over $\Omega$, we get
\[
\left\|\frac{u}{d_\Omega^{1-\frac{N}{p}}}\right\|_{L^\infty(\Omega)}\le \frac{1}{(\mu_p(B_1))^\frac{1}{p}}\,\left(\int_{\mathbb{R}^N} |\nabla u|^p\,dx\right)^\frac{1}{p}, \qquad \mbox{ for every } u\in C^\infty_0(\Omega).
\]
This gives the desired Hardy inequality result for $q=\infty$, together with the claimed lower bound in \eqref{lowerboundhardyext}.
In the case $p=q$, the estimate in \eqref{lowerboundhardyext} comes from \cite{Av, Ch}, as already recalled.
\par
The case $p<q<\infty$ now simply follows from interpolation of the two endpoints. Indeed, for every $u\in C^\infty_0(\Omega)$, we have 
\[
\left(\int_\Omega \frac{|u|^q}{d_\Omega^{\gamma\,q}}\,dx\right)^\frac{p}{q} \le \left(\int_\Omega \frac{|u|^p}{d_\Omega^p}\,dx\right)^\frac{p}{q}\,\left\|\frac{u}{d_\Omega^\frac{\gamma\,q-p}{q-p}}\right\|_{L^\infty(\Omega)}^{(q-p)\,\frac{p}{q}}
\]
where we set for simplicity
\[
\gamma=\frac{N}{q}+\frac{p-N}{p}.
\]
We observe that 
\[
\frac{\gamma\,q-p}{q-p}=1-\frac{N}{p},
\]
thus by using the definitions of $\mathfrak{h}_p(\Omega)$ and $\mathfrak{h}_{p,\infty}(\Omega)$, we obtain 
\[
\left(\int_\Omega \frac{|u|^q}{d_\Omega^{\gamma\,q}}\,dx\right)^\frac{p}{q}\le \left(\frac{1}{\mathfrak{h}_p(\Omega)}\right)^\frac{p}{q}\,\left(\frac{1}{\mathfrak{h}_{p,\infty}(\Omega)}\right)^\frac{q-p}{q}\,\int_\Omega |\nabla u|^p\,dx.
\]
By taking the infimum over $u\in C^\infty_0(\Omega)$, we get the lower bound \eqref{lowerboundhardy}.
\vskip.2cm\noindent
In order to prove the last statement, for the case $q=\infty$, from \eqref{lowerboundhardyext} and Lemma \ref{lm:morreyA} we have 
\[
\liminf_{p\to\infty} \Big(\mathfrak{h}_{p,\infty}(\Omega)\Big)^\frac{1}{p}\ge \lim_{p\to\infty}\Big(\mu_p(B_1)\Big)^\frac{1}{p}=1.
\]
In the case $p=q$, we directly have 
\[
\liminf_{p\to\infty} \Big(\mathfrak{h}_{p}(\Omega)\Big)^\frac{1}{p}\ge \lim_{p\to\infty} \frac{p-N}{p}=1.
\]
In order to prove that the $\limsup$ is smaller than or equal to $1$, it is sufficient to use a suitable trial function: for every $x_0\in \Omega$, we have that
\[
\varphi(x)=\big(r-|x-x_0|\big)_+\in W^{1,p}_0(\Omega),\qquad \mbox{ for } r=d_\Omega(x_0).
\]
Since $p>N$, we can infer existence of a sequence $\{\varphi_n\}_{n\in\mathbb{N}}\subset C^\infty_0(\Omega)$ such that 
\[
\lim_{n\to\infty} \|\nabla \varphi_n-\nabla \varphi\|_{L^p(\Omega)}=\lim_{n\to\infty} \|\varphi_n-\varphi\|_{L^\infty(\Omega)}=0.
\]
Thus we get 
\[
\Big(\mathfrak{h}_{p,\infty}(\Omega)\Big)^\frac{1}{p}\le \lim_{n\to\infty}\frac{\|\nabla \varphi_n\|_{L^p(\Omega)}}{\displaystyle\left\|\frac{\varphi_n}{d_\Omega^{\frac{p-N}{p}}}\right\|_{L^\infty(\Omega)}}=\frac{(\omega_N\,r^N)^\frac{1}{p}}{\displaystyle\left\|\frac{\varphi}{d_\Omega^{\frac{p-N}{p}}}\right\|_{L^\infty(\Omega)}},
\]
and
\[
\Big(\mathfrak{h}_{p}(\Omega)\Big)^\frac{1}{p}\le \lim_{n\to\infty}\frac{\|\nabla \varphi_n\|_{L^p(\Omega)}}{\displaystyle\left\|\frac{\varphi_n}{d_\Omega}\right\|_{L^p(\Omega)}}=\frac{(\omega_N\,r^N)^\frac{1}{p}}{\displaystyle\left\|\frac{\varphi}{d_\Omega}\right\|_{L^p(\Omega)}}.
\]
By using that 
\[
\lim_{p\to\infty} \frac{(\omega_N\,r^N)^\frac{1}{p}}{\displaystyle\left\|\frac{\varphi}{d_\Omega^{\frac{p-N}{p}}}\right\|_{L^\infty(\Omega)}}=\lim_{p\to\infty} \frac{(\omega_N\,r^N)^\frac{1}{p}}{\displaystyle\left\|\frac{\varphi}{d_\Omega}\right\|_{L^p(\Omega)}}=\inf_{x\in B_r(x_0)} \frac{d_\Omega(x)}{(r-|x-x_0|)_+}\le \frac{d_\Omega(x_0)}{r}=1,
\]
we then obtain the desired conclusion.
\end{proof}
 \begin{rem}\label{rem:estensione} 
	By a standard density argument, for every $p>N$ and $p\le q\le \infty$ the Hardy inequality 
	\[
	\mathfrak{h}_{p,q}(\Omega)\,\left\|\frac{u}{d_\Omega^{\frac{N}{q}+\frac{p-N}{p}}}\right\|_{L^q(\Omega)}^p\le \int_\Omega |\nabla u|^p\,dx,
	\]
	still holds in both spaces $\mathcal{D}^{1,p}_0(\Omega)$ and $W^{1,p}_0(\Omega)$, for every open set $\Omega\subsetneq\mathbb{R}^N$.
\end{rem}

	\section{Embedding theorems}\label{sez5}
	
	For the ease of presentation of our main embedding results, we distinguish between three cases: 
	\begin{itemize}
		\item the case $q<p$; 
		\vskip.2cm
		\item the case $q=p$ with $\Omega$ having finite inradius;
		\vskip.2cm
		\item the case $q=p$ with $\Omega$ being a quasibounded set.
	\end{itemize}
	\vskip.2cm
Then, in the final subsection, we will briefly discuss the case $q>p$.	
	\subsection{The case $q<p$}
	
	\begin{thm}\label{teo:q<p} 
		Let $1\le q<p<\infty$ and let $\Omega\subsetneq \mathbb{R}^N$ be an open set. The following facts hold:
		\begin{enumerate}
			\item[(i)] we have that  
			\[
			\mathcal D^{1,p}_0(\Omega)\hookrightarrow L^q(\Omega) \qquad \Longrightarrow \qquad d_{\Omega} \in L^{\frac{p\,q}{p-q}}(\Omega),
			\] 
			and the following upper bound holds 
			\begin{equation}\label{eq:pqstima1}
				\lambda_{p,q}(\Omega) \,\left(\displaystyle\int_{\Omega} d_{\Omega}^{\frac{p\,q}{p-q}} \, dx \right)^{\frac{p-q}{q}} \le \lambda_p(B_1);
			\end{equation}
			\vskip.2cm
			\item[(ii)] moreover, if $N<p<\infty$, then we also have
			\[  
			d_{\Omega} \in L^{\frac{p\,q}{p-q}}(\Omega)\qquad \Longrightarrow\qquad \mathcal D^{1,p}_0(\Omega)\hookrightarrow L^q(\Omega),
			\]
			and the following lower bound holds			
			\begin{equation}\label{eq:pqstima2}
				\mathfrak{h}_p(\Omega) \le \lambda_{p,q}(\Omega) \,\left(\displaystyle\int_{\Omega} d_{\Omega}^{\frac{p\,q}{p-q}} \, dx \right)^{\frac{p-q}{q}},
			\end{equation}
			where $\mathfrak{h}_p(\Omega)$ is the sharp Hardy constant (see Theorem \ref{teo:hardyineq});
			\vskip.2cm
			\item[(iii)] finally, if $p\le N$, there exists an open set $\mathcal{T}\subsetneq \mathbb{R}^N$ such that 
			\[
			d_{\mathcal{T}}\in L^1(\mathcal{T})\cap L^{\infty}(\mathcal{T})\qquad  \mbox{ but }\qquad \mathcal{D}^{1,p}_0(\mathcal{T})\not\hookrightarrow L^q(\mathcal{T}).
			\]
		\end{enumerate}
	\end{thm}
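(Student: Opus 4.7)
The plan is to compare $d_\Omega$ with the positive Lane--Emden extremal $w=w_{p,q}^{\Omega}\in W^{1,p}_0(\Omega)$, whose existence follows from the $(p,q)$-admissibility of $\Omega$. For every $x\in\Omega$, the inclusion $B_{d_\Omega(x)}(x)\subseteq\Omega$ combined with the comparison principle for the Lane--Emden equation proved in \cite{BPZ} yields
\[
w(x)\ge w_{p,q}^{B_{d_\Omega(x)}(x)}(x).
\]
On a ball, the Lane--Emden solution is radially symmetric and strictly decreasing, so its value at the center is its $L^\infty$-norm; the scaling $u_t(y)=t^{p/(p-q)}\,u((y-x)/t)$ recalled in Section \ref{sez:solutions} therefore gives $w_{p,q}^{B_r(x)}(x)=r^{p/(p-q)}\,w_{p,q}^{B_1}(0)$. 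Raising to the $q$-th power, integrating over $\Omega$, and invoking the identity \eqref{pqnorm} delivers
\[
\Big(\lambda_{p,q}(\Omega)\Big)^{-q/(p-q)}=\int_\Omega w^q\,dx\ge \Big(w_{p,q}^{B_1}(0)\Big)^q\int_\Omega d_\Omega^{pq/(p-q)}\,dx,
\]
which shows both that $d_\Omega\in L^{pq/(p-q)}(\Omega)$ and that $\lambda_{p,q}(\Omega)\,\big(\int d_\Omega^{pq/(p-q)}\big)^{(p-q)/q}\le (w_{p,q}^{B_1}(0))^{-(p-q)}$. Finally, the Picone-type inequality \eqref{eq:lap} applied to $B_1$ reads $(w_{p,q}^{B_1}(0))^{-(p-q)}\le \lambda_p(B_1)$, sharpening the constant to the clean geometric value appearing in \eqref{eq:pqstima1}.

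\textbf{Part (ii).}
Here the strategy is a H\"older interpolation between the Hardy inequality of Theorem \ref{teo:hardyineq} -- valid on \emph{every} open set when $p>N$ -- and the summability of $d_\Omega$. For every $u\in C^\infty_0(\Omega)$, writing $|u|^q=(|u|/d_\Omega)^q\,d_\Omega^q$ and applying H\"older's inequality with exponents $p/q$ and $p/(p-q)$ yields
\[
\int_\Omega|u|^q\,dx\le \left(\int_\Omega\frac{|u|^p}{d_\Omega^p}\,dx\right)^{q/p}\left(\int_\Omega d_\Omega^{pq/(p-q)}\,dx\right)^{(p-q)/p}.
\]
Bounding the first factor via $\mathfrak h_p(\Omega)\,\int|u|^p/d_\Omega^p\le \int|\nabla u|^p$ and taking the infimum over $u$ with unit $L^q$-norm then gives simultaneously the continuous embedding and the estimate \eqref{eq:pqstima2}.

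\textbf{Part (iii).}
I would build $\mathcal T$ as a disjoint union $\bigsqcup_n T_n$ of widely separated perforated balls $T_n=B_{R_n}(y_n)\setminus E_n$, where $R_n\to\infty$ and $E_n\subset B_{R_n}(y_n)$ is a finite lattice of points with mesh $\epsilon_n>0$. Since $p\le N$, finite sets of points are $p$-polar, so removing $E_n$ leaves the homogeneous Sobolev space unchanged: $\mathcal D^{1,p}_0(T_n)=\mathcal D^{1,p}_0(B_{R_n}(y_n))$, whence
\[
\lambda_{p,q}(T_n)=\lambda_{p,q}(B_{R_n}(y_n))=R_n^{N-p-Np/q}\,\lambda_{p,q}(B_1)\longrightarrow 0.
\]
By domain monotonicity this forces $\lambda_{p,q}(\mathcal T)=0$. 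By construction $d_{\mathcal T}\le \epsilon_n$ on $T_n$, so a choice such as $R_n=n$, $\epsilon_n=n^{-N-2}$ (with the $T_n$ placed far enough apart that $d_{\mathcal T}=d_{T_n}$ on each $T_n$) yields $d_{\mathcal T}\in L^\infty(\mathcal T)$ and $\int_{\mathcal T} d_{\mathcal T}\,dx\lesssim \sum_n \epsilon_n R_n^N<\infty$. This last part is the genuine difficulty of the theorem: the tension between small $d_\Omega$ and vanishing $\lambda_{p,q}$ is unresolvable when $p>N$ (by parts (i)--(ii)), and is broken in the subconformal and conformal regimes precisely by the availability of $p$-polar perturbations that drastically reduce the distance function without altering the Sobolev space.
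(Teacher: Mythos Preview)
Your arguments for parts (i) and (ii) coincide with the paper's: the same comparison with the scaled Lane--Emden solution on inscribed balls (together with \eqref{pqnorm} and the Picone bound \eqref{eq:lap}) for (i), and the same H\"older--Hardy interpolation for (ii).

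For part (iii) your construction is genuinely different. The paper builds a single \emph{connected} ``fragile tower'': it stacks unit cubes $C_k=(0,1)^{N-1}\times(k,k+1]$, subdivides each $C_k$ dyadically into $2^{kN}$ cubes of side $2^{-k}$, and punctures their centers. It then shows $\lambda_{p,q}(\mathcal T_m)=\lambda_{p,q}((0,1)^{N-1}\times(0,m+1))\to 0$ via an isoperimetric upper bound from \cite{Bra1}. Your disjoint perforated balls are conceptually cleaner---the key identity $\lambda_{p,q}(T_n)=\lambda_{p,q}(B_{R_n})$ needs only the $p$-polarity of finite sets and the explicit scaling of $\lambda_{p,q}$ on balls, avoiding the external estimate from \cite{Bra1}. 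Two minor caveats: your claim ``$d_{\mathcal T}\le \epsilon_n$ on $T_n$'' should read $d_{\mathcal T}\le C_N\,\epsilon_n$ for a dimensional constant, and you should check that the lattice $E_n$ is fine enough near $\partial B_{R_n}$ (harmless, since $\partial B_{R_n}\subset\partial T_n$ already controls the distance there). The trade-off is that the paper's connected tower is reused later (Theorem \ref{teo:compact}(iii)) to exhibit a quasibounded set on which $\mathcal D^{1,p}_0\hookrightarrow L^p$ is continuous but not compact; your disconnected set has $\lambda_p(\mathcal T)=0$ (it contains, up to polar sets, balls of every radius), so it cannot play that second role.
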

	
	\begin{proof}
		We prove each point separately.
		\begin{enumerate}
			\item[(i)] Let $x_0\in \Omega$. Since both $w_{p,q}^{B_1}$ and $w_{p,q}^{\Omega}$ are continuous functions, evaluating the lower bound in \cite[Theorem 5.2]{BPZ} at $x=x_0$ and $r=d_{\Omega}(x_0)$, we get 
			\begin{equation}
				\label{eq:lowest}
				d_{\Omega}(x_0)^{\frac{p}{p-q}}\,w_{p,q}^{B_1}(0) \le  w_{p,q}^{\Omega} (x_0). 
			\end{equation}
			Then, by raising to the power $q$ both sides of  \eqref{eq:lowest}, integrating on $\Omega$ and exploiting \eqref{pqnorm}, we get
			\[
			\int_{\Omega} d_{\Omega}^{\frac{p\,q}{p-q}} \, dx \le \big(w_{p,q}^{B_1}(0)\big)^{-q} \, \int_{\Omega} (w_{p,q}^{\Omega})^q(x) \, dx= \big(w_{p,q}^{B_1}(0)\big)^{-q} \,\left(\frac{1}{\lambda_{p,q}(\Omega)}\right)^{\frac{q}{p-q}}.
			\]
			By using \eqref{eq:lap} for the ball $B_1$
			\[ 
			\big(w_{p,q}^{B_1}(0)\big)^{-q} \le \Big(\lambda_p(B_1)\Big)^{\frac{q}{p-q}}, 
			\]
			we get the claimed summability of $d_\Omega$, together with the upper bound in \eqref{eq:pqstima1}. 
			\vskip.2cm
			\item[(ii)] Let us suppose that $d_{\Omega} \in L^{\frac{p\,q}{p-q}}(\Omega)$ and $p>N$. For every $u\in C^{\infty}_0(\Omega)$, a joint application of H\"older's and Hardy's inequalities (see Theorem \ref{teo:hardyineq}) leads to
			\[
			\begin{split}
				\int_{\Omega} |u|^q \, dx &\le \left( \int_{\Omega} \frac{|u|^p}{d_{\Omega}^{\,p}} \, dx \right)^\frac{q}{p}\, \left( \int_{\Omega} d_{\Omega}^{\frac{p\,q}{p-q}} \, dx\right)^{\frac{p-q}{p}} \\
				&\le \Big(\mathfrak{h}_p(\Omega)\Big)^{-\frac{q}{p}} \left( \int_{\Omega} |\nabla u|^p \, dx \right)^{\frac{q}{p}} \left( \int_{\Omega} d_{\Omega}^{\frac{p\,q}{p-q}} \, dx\right)^{\frac{p-q}{p}}.
			\end{split}
			\]
			This in turn implies that
			\[
			\dfrac{\displaystyle\int_{\Omega} |\nabla u|^p \, dx}{\left( \displaystyle\int_{\Omega} |u|^q \, dx \right)^{\frac{p}{q}}}\ge \dfrac{\mathfrak{h}_p(\Omega)}{\left( \displaystyle\int_{\Omega} d_{\Omega}^{\frac{p\,q}{p-q}} \, dx\right)^{\frac{p-q}{q}}}.
			\]
			By taking the infimum on $C^{\infty}_0(\Omega)$ on the left-hand side, we get the lower bound in \eqref{eq:pqstima2}. This in particular shows that $\lambda_{p,q}(\Omega)>0$, i.e. we have the embedding
			\[
			\mathcal{D}^{1,p}_0(\Omega)\hookrightarrow L^q(\Omega),
			\] 
			as desired.
			\vskip.2cm
			\item[(iii)] We construct an open set $\mathcal{T} \subseteq\mathbb{R}^N$ such that, under the assumption $1 < p \le N$
			\vskip.2cm
			\begin{itemize}
				\item $d_{\mathcal{T}} \in L^1(\mathcal{T}) \cap L^\infty(\mathcal{T})$, hence $d_{\mathcal{T}} \in L^\alpha(\mathcal{T})$  for every $\alpha \in [1, +\infty]$;
				\vskip.2cm
				\item $\mathcal{D}^{1,p}_0(\mathcal{T})$ is not compactly embedded in $L^q(\mathcal{T})$, for every $1 \le q < p$.
			\end{itemize}
			\vskip.2cm
			\begin{figure}
				\includegraphics[scale=.4]{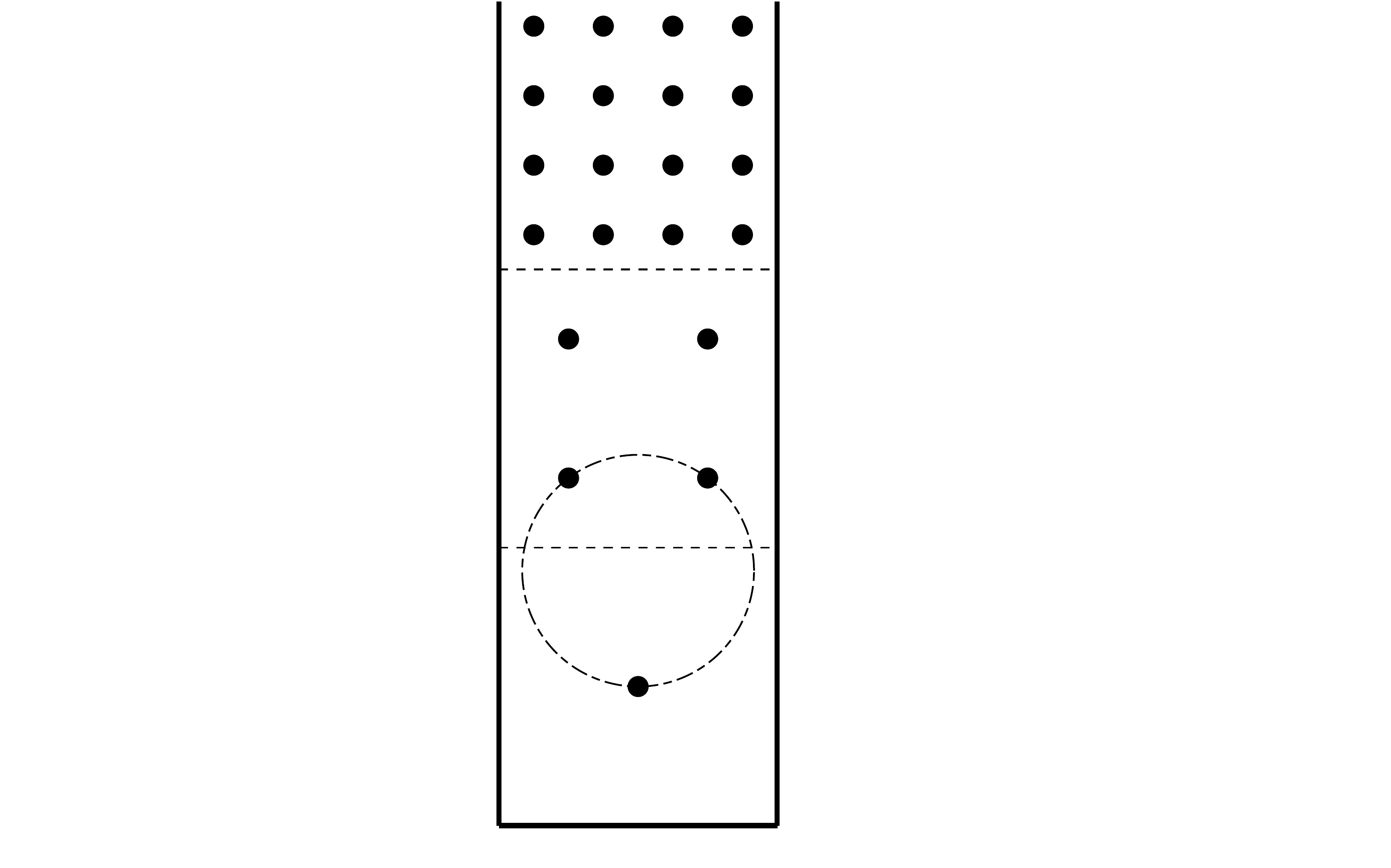}
				\caption{The construction of the set $\mathcal{T}$. The horizontal dashed lines denote the separation lines between the cubes $C_k$. The dashed circular line highlights the ball with maximal radius $r_\mathcal{T}$.}
			\end{figure}
			We consider the $(N-1)-$dimensional open hypercube $Q=(0,1)^{N-1} \subseteq\mathbb{R}^{N-1}$ and we define
			\[
			C_k=Q \times (k, k+1],\qquad \mbox{ for every } k \in \mathbb{N}.
			\]	
			Then, for every $k \in \mathbb{N}$, we take a dyadic partition of $C_k$, made of $2^{k\,N}$ cubes with side length $2^{-k}$. We indicate by $C_k(j)$ each of these cubes, with $j=1,\dots,2^{k\,N}$. We also denote by $x_k(j)$ the center of the cube $C_k(j)$ and by
			\[
			S_k:=\Big\{ x_k(i) : 1 \le i \le 2^{k\,N}\Big\},
			\] 
			the collection of all these centers, at a given $k\in\mathbb{N}$.
			Finally, we call \textit{infinite fragile tower} the open set given by 
			\[ 
			\mathcal{T} = \bigcup_{k \in \mathbb{N}} \left( C_k \setminus S_k \right).
			\] 	
			We first show that the condition $d_{\mathcal{T}} \in L^1(\mathcal{T}) \cap L^\infty(\mathcal{T})$ is satisfied. Indeed, we first observe that 
			\[
			r_\mathcal{T}=\frac{5}{12},
			\]
			which implies that $d_\mathcal{T}\in L^\infty(\mathcal{T})$.
			Moreover, we have
			\[
			d_{\mathcal{T}}(x) \le 2^{-k-1}\,\sqrt{N},\qquad \mbox{ for  } x \in C_k(j)\setminus\{x_k(j)\},\ \mbox{ } j=1,\dots,2^{k\,N} \mbox{ and } k\in \mathbb{N},
			\] 
			by construction. Then
			\[
			\begin{split}
				\int_{\mathcal{T}} d_\mathcal{T} \, dx &= \sum_{k \in \mathbb{N}} \sum_{j=1}^{2^{kN}}\int_{C_k(j) \setminus \{x_k(j)\}} d_\mathcal{T} \, dx \\
				&\le \frac{\sqrt{N}}{2} \sum_{k \in \mathbb{N}} \left( \frac{1}{2^k} \right) |C_k \setminus S_k|=\frac{\sqrt{N}}{2} \sum_{k \in \mathbb{N}} \left( \frac{1}{2^k} \right) = \sqrt{N}.   
			\end{split}
			\]
			We now show that for every $1\le q<p\le N$, we have
			\[
			\lambda_{p,q}(\mathcal{T})=0.
			\]
			This would imply that $D^{1,p}_0(\mathcal{T})$ is not continuously embedded in $L^q(\mathcal{T})$.
			At this aim, for every $m\in\mathbb{N}$, we introduce the truncated tower
			\[
			\mathcal{T}_m=\left(\bigcup_{k=0}^m \left( C_k \setminus S_k \right)\right)\setminus (Q\times \{m+1\}).
			\]
			This is a bounded open set contained in $\mathcal{T}$, thus, by monotonicity with respect to set inclusion, we have
			\[
			\lambda_{p,q}(\mathcal{T})\le \lambda_{p,q}(\mathcal{T}_m).
			\]	
			Therefore, in order to get the desired conclusion, it is sufficient to show that 
			\[
			\lim_{m\to\infty} \lambda_{p,q}(\mathcal{T}_m)=0.
			\]
			Since $p\le N$, we know that points have zero $p-$capacity and thus we have (see \cite[Chapter 17]{T})
			\[
			\lambda_{p,q}(\mathcal{T}_m)=\lambda_{p,q}(Q\times (0,m+1)).
			\]
			By appealing to \cite[Main Theorem]{Bra1}, the last quantity can be estimated from above by 
			\[
			\begin{split}
				\lambda_{p,q}(Q\times (0,m+1))&\le \left(\frac{\pi_{p,q}}{2}\right)^p\,\left(\frac{\mathcal{H}^{N-1}(Q\times(0,m+1))}{|Q\times (0,m+1)|^{1-\frac{1}{p}+\frac{1}{q}}}\right)^p\\
				&\le \left(\frac{\pi_{p,q}}{2}\right)^p\,\left(\frac{2\,(N-1)\,(m+1)+2}{(m+1)^{1-\frac{1}{p}+\frac{1}{q}}}\right)^p.
			\end{split}
			\]
			By using that $q<p$, it is easily seen that the last term converges to $0$, as $m$ goes to $\infty$. This gives the desired conclusion.
		\end{enumerate}
		The proof is now over.
	\end{proof}
Before proceeding further, a couple of comments are in order on the geometric estimates obtained in the previous result.	
	\begin{rem}
	\label{rem:vanbasten}
For $p=2$, the lower bound \eqref{eq:pqstima1} has been obtained in \cite[Theorem 3]{vBe}. The proof there is simpler: up to some technical issues, it is simply based on using the trial function $d_\Omega$ in the definition of $\lambda_{2,q}(\Omega)$. However, this produces a poorer estimate: observe that the constant appearing in \cite[equation (9)]{vBe} blows-up as $q\nearrow p=2$.	This is not the case for our estimate \eqref{eq:pqstima1}.
	\end{rem}
	
	\begin{rem}
		Let $N < p <\infty$, $1\le q<p$ and let $\Omega \subseteq \mathbb{R}^N$ be an open set, such that $|\Omega|<\infty$.
		If $d_{\Omega} \in L^{\frac{p\,q}{p-q}}(\Omega)$, then, 
		from the lower bound in \eqref{eq:pqstima2}, we get
		\begin{equation}
		\label{HP2}   
			\lambda_{p,q}(\Omega)\, |\Omega|^{\frac{p-q}{q}} \ge \frac{	\mathfrak{h}_p(\Omega) }{r_{\Omega}^p}.
		\end{equation}
		This is an extension to general open sets of the geometric estimate contained in \cite[Theorem 5.7]{BPZ}. The constant $\mathfrak{h}_p(\Omega)$ is very likely not to be sharp, it would be interesting to determine the sharp constant for \eqref{HP2}.
	\end{rem}
	
	\subsection{The case $p=q$: continuity}
	
	\begin{thm} \label{teo:q=p}  
		Let $1<p<\infty$ and let $\Omega\subsetneq \mathbb{R}^N$ be an open set. The following facts hold:
		\begin{enumerate}
			\item[(i)] we have that  
			\begin{equation}\label{eq:pqstima1pupper}
				\mathcal{D}^{1,p}_0(\Omega)\hookrightarrow L^p(\Omega) \qquad \Longrightarrow \qquad d_{\Omega} \in L^\infty(\Omega),
			\end{equation}
			and the following upper bound holds 
			\begin{equation}\label{eq:pqstima1p}
				\lambda_{p}(\Omega) \,r_\Omega^p \le \lambda_{p}(B_1);
			\end{equation}
			\item[(ii)] moreover, if $N<p<\infty$, then we also have
			\[
			d_\Omega\in L^\infty(\Omega)\qquad \Longrightarrow \qquad\mathcal{D}^{1,p}_0(\Omega)\hookrightarrow L^p(\Omega),
			\]
			and the following lower bound holds
			\begin{equation}\label{HP1} 
				\mathfrak{h}_p(\Omega) \le \lambda_{p}(\Omega) \, r_{\Omega}^{p};
			\end{equation} 
			\item[(iii)] finally, if $p\le N$, then for the open set $\mathcal{P}:=\mathbb{R}^N\setminus \mathbb{Z}^N$ we have
			\[
			d_{\mathcal{P}}\in L^{\infty}(\mathcal{P})\qquad  \mbox{ but }\qquad \mathcal{D}^{1,p}_0(\mathcal{P})\not\hookrightarrow L^p(\mathcal{P}).
			\]
		\end{enumerate}
	\end{thm}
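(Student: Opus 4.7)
\textbf{Part (i).} The upper bound \eqref{eq:pqstima1p} should follow from monotonicity of $\lambda_p$ with respect to set inclusion combined with scaling. For every ball $B_r(x_0) \subseteq \Omega$, monotonicity gives $\lambda_p(\Omega) \le \lambda_p(B_r(x_0))$, while the scaling $\lambda_p(B_r(x_0)) = r^{-p}\,\lambda_p(B_1)$ is standard. Taking the supremum over admissible radii and recalling the definition \eqref{inradius} yields \eqref{eq:pqstima1p}. In particular, if $\mathcal{D}^{1,p}_0(\Omega)\hookrightarrow L^p(\Omega)$, i.e.\ $\lambda_p(\Omega)>0$, then $r_\Omega<+\infty$, which by definition of the inradius amounts to $d_\Omega\in L^\infty(\Omega)$.

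\textbf{Part (ii).} Assume $p>N$ and $r_\Omega<+\infty$. For every $u\in C^\infty_0(\Omega)$, I would write the pointwise identity $|u|^p = \bigl(|u|/d_\Omega\bigr)^p d_\Omega^p$ and bound $d_\Omega\le r_\Omega$ to obtain
\[
\int_\Omega |u|^p\,dx \le r_\Omega^p \int_\Omega \frac{|u|^p}{d_\Omega^p}\,dx.
\]
Applying Hardy's inequality from Theorem \ref{teo:hardyineq} (which crucially requires $p>N$ to hold on a general open set, with finite sharp constant $\mathfrak{h}_p(\Omega)$), this gives
\[
\int_\Omega |u|^p\,dx \le \frac{r_\Omega^p}{\mathfrak{h}_p(\Omega)}\int_\Omega |\nabla u|^p\,dx.
\]
Taking the infimum over normalized $u\in C^\infty_0(\Omega)$ gives at once both the embedding $\mathcal{D}^{1,p}_0(\Omega)\hookrightarrow L^p(\Omega)$ and the lower bound \eqref{HP1}.

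\textbf{Part (iii).} For $\mathcal{P}=\mathbb{R}^N\setminus\mathbb{Z}^N$, any point $x\in\mathcal{P}$ lies in some unit cube with vertices in $\mathbb{Z}^N$, so
\[
d_\mathcal{P}(x)\le \frac{\sqrt{N}}{2},
\]
hence $d_\mathcal{P}\in L^\infty(\mathcal{P})$ (and $r_\mathcal{P}=\sqrt{N}/2$). To show $\lambda_p(\mathcal{P})=0$ when $p\le N$, I would use that single points have zero $p$-capacity, which implies, by standard capacitary arguments (cf.\ \cite[Chapter 17]{T}, as already invoked in the proof of Theorem \ref{teo:q<p}\,(iii)), that $C^\infty_0(\mathcal{P})$ is dense in $\mathcal{D}^{1,p}_0(\mathbb{R}^N)$, so that
\[
\lambda_p(\mathcal{P}) = \lambda_p(\mathbb{R}^N).
\]
The latter vanishes by the classical scaling argument: for any $\psi\in C^\infty_0(\mathbb{R}^N)$, the family $\psi_t(x)=t^{N/p}\psi(tx)$ satisfies $\|\psi_t\|_{L^p(\mathbb{R}^N)} = \|\psi\|_{L^p(\mathbb{R}^N)}$ and $\|\nabla \psi_t\|_{L^p(\mathbb{R}^N)}^p = t^p\,\|\nabla \psi\|_{L^p(\mathbb{R}^N)}^p \to 0$ as $t\to 0^+$.

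\textbf{Main obstacle.} Parts (i) and (ii) are essentially one-line arguments once the right tools (scaling of $\lambda_p(B_1)$, respectively the Hardy inequality of Theorem \ref{teo:hardyineq}) are invoked. The delicate point is part (iii): one must justify carefully, via $p$-capacity of a countable set of points being zero for $p\le N$, that removing $\mathbb{Z}^N$ does not alter the homogeneous Sobolev space. This is where the restriction $p\le N$ is used in an essential way and cannot be bypassed.
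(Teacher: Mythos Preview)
Your proposal is correct and matches the paper's proof almost exactly. Parts (i) and (ii) are identical to the paper's arguments. In part (iii), the paper avoids invoking removal of the full countable set $\mathbb{Z}^N$ directly: it instead uses monotonicity to pass to the bounded sets $B_m\setminus\mathbb{Z}^N$ (where only finitely many lattice points are removed, so the cited result from \cite[Chapter 17]{T} applies verbatim), obtaining $\lambda_p(\mathcal{P})\le\lambda_p(B_m\setminus\mathbb{Z}^N)=\lambda_p(B_m)=m^{-p}\lambda_p(B_1)\to 0$. Your route amounts to the same thing once you observe that any $\psi\in C^\infty_0(\mathbb{R}^N)$ has compact support meeting only finitely many points of $\mathbb{Z}^N$, so the finite-point removal suffices; phrasing it as ``$C^\infty_0(\mathcal{P})$ dense in $\mathcal{D}^{1,p}_0(\mathbb{R}^N)$'' is slightly loose (you also need $L^p$ convergence for the Rayleigh quotient), but the underlying $W^{1,p}$ approximation gives both.
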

	
	\begin{proof}
		\begin{enumerate}
			\item[(i)] Let $\lambda_p(\Omega)>0$ and let $\{B_{r_n}(x_n)\}_{n \in \mathbb{N}} \subseteq \Omega$ be a sequence of balls such that $r_n$ converges to $ r_\Omega$ as $n$ goes to $\infty$. 
			Thanks to the monotonicity with respect to sets inclusion of $\lambda_{p}$, we get that
			\[ 
			\lambda_{p}(\Omega) \le \lambda_{p}(B_{r_n}(x_n)). 
			\]
			In particular, using the scaling properties of $\lambda_p$, we obtain that 
			\[ r_n^{\,p} \le \frac{\lambda_{p}(B_1)}{ \lambda_{p}(\Omega)},  \]
			and, by sending $n$ to $\infty$, we get $r_{\Omega}<+\infty$ and the upper bound in
			\eqref{eq:pqstima1p};
			\vskip.2cm
			\item[(ii)] let us suppose that $r_{\Omega}<+\infty$ and $N<p<\infty$. By applying the Hardy inequality of Theorem \ref{teo:hardyineq}, we have that 
			\[ 
			\int_{\Omega} |u|^p \, dx \le r^{\,p}_{\Omega} \int_{\Omega} \frac{|u|^p}{d_{\Omega}^p} \, dx \le \frac{1}{\mathfrak{h}_p(\Omega)} \, r_{\Omega}^{\,p} \int_{\Omega} |\nabla u|^p \, dx, \quad \mbox{ for every } u\in C^{\infty}_0(\Omega). 
			\]
			By taking the infimum on $C^{\infty}_0(\Omega)$,
			we get the lower bound in \eqref{HP1}. In particular, if $r_{\Omega}<+\infty$, then $\lambda_p(\Omega)>0$ and thus the continuous embedding $\mathcal{D}^{1,p}_0(\Omega)\hookrightarrow L^p(\Omega)$ holds true;
			\vskip.2cm
			\item[(iii)] it is sufficient to note that
			\[ \lambda_p(\mathcal{P}) \le \lambda_p(B_m \setminus \mathbb{Z}^N), \qquad \mbox{ for every }m\in \mathbb{N}.\]
			Thanks to the assumption $p\le N$, again by \cite[Chapter 17]{T} it holds
			\[
			\lambda_{p}(B_{m} \setminus \mathbb{Z}^N)=\lambda_{p}(B_m).
			\]
			By using the scale property of $\lambda_p$, we get that 
			\[ \lambda_p(\mathcal{P}) \le \lim_{m\to \infty}\lambda_p(B_{m}) = \lim_{m\to \infty} \frac{\lambda_p(B_1)}{m^p}=0 .\] 
			This gives the desired conclusion.
		\end{enumerate}
		\vskip.2cm
		The proof is concluded.
	\end{proof}
\begin{rem}
For $p>N$, the lower bound \eqref{HP1} is an extension to general open sets with finite inradius of the Hersch-Protter-Kajikiya inequality
\[
\lambda_p(\Omega)\ge \left(\frac{\pi_p}{2}\right)^p\,\frac{1}{r_\Omega^p},
\] 
which is valid for every $\Omega\subseteq\mathbb{R}^N$ open {\it convex} set and every $1<p<\infty$ (see \cite{He,Pr} for the case $p=2$ and \cite{Ka1} for the general case). Such an extension can be also found in \cite[Theorem 1.4.1]{Po}, with a different proof and a poorer constant: the result in \cite{Po} is stated for {\it bounded} open sets, however a closer inspection of the proof reveals that it still works for open sets with finite inradius.
\par
Here as well, it would be very interesting to determine the sharp constant $C_{N,p}$ such that for every $\Omega\subseteq\mathbb{R}^N$ open set with finite inradius, we have
\[
\lambda_p(\Omega)\ge \frac{C_{N,p}}{r_\Omega^p},\qquad \mbox{ for every } N<p<\infty.
\] 
We observe that by \eqref{HP1} and \eqref{lowerboundhardyext}, we have 
\[
C_{N,p}\ge \left(\frac{p-N}{p}\right)^p.
\]
\end{rem}

	\subsection{The case $p=q$: compactness}
	
	\begin{thm}\label{teo:compact} Let $1<p<\infty$ and let $\Omega\subsetneq \mathbb{R}^N$ be an open set. The following facts hold: 
		\begin{enumerate}
			\item[(i)] we have that
			\[
			\mathcal D^{1,p}_0(\Omega)\hookrightarrow L^p(\Omega) \mbox{ is compact }\qquad \Longrightarrow \qquad \Omega \mbox{ is }quasibounded; 
			\] 
			\item[(ii)] moreover, if $N<p<\infty$, then we also have
			\[ 
			\Omega \mbox{ is quasibounded }\qquad \Longrightarrow\qquad	\mathcal D^{1,p}_0(\Omega)\hookrightarrow L^p(\Omega) \mbox{ is compact};
			\]
			\item[(iii)] finally, if $p\le N$ and $\mathcal{T}\subsetneq \mathbb{R}^N$ is the same open set of Theorem \ref{teo:q<p}, then $\mathcal{T}$ is quasibounded and the embedding $\mathcal{D}^{1,p}_0(\mathcal{T})\hookrightarrow L^p(\mathcal{T})$ is continuous, but not compact.
		\end{enumerate}
	\end{thm}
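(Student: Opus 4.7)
For part (i), I would argue by contrapositive. Suppose $\Omega$ is not quasibounded; then there exist $\delta>0$ and a sequence $\{x_n\}_{n\in\mathbb{N}}\subset\Omega$ with $|x_n|\to+\infty$ and $d_\Omega(x_n)\ge\delta$, so that $B_{\delta/2}(x_n)\subset\Omega$ for every $n$. Fixing a nontrivial $u_0\in C^\infty_0(B_{\delta/2})$ and setting $u_n(x):=u_0(x-x_n)\in C^\infty_0(\Omega)$, one has
\[
\|\nabla u_n\|_{L^p(\Omega)}=\|\nabla u_0\|_{L^p(\mathbb{R}^N)}\qquad\text{and}\qquad\|u_n\|_{L^p(\Omega)}=\|u_0\|_{L^p(\mathbb{R}^N)},
\]
so $\{u_n\}$ is bounded in $\mathcal{D}^{1,p}_0(\Omega)$. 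After extracting a subsequence for which $|x_{n_j}-x_{n_k}|>\delta$ whenever $j\ne k$, the supports become pairwise disjoint and $\|u_{n_j}-u_{n_k}\|^p_{L^p(\Omega)}=2\,\|u_0\|^p_{L^p}$, forbidding any $L^p$-convergent subsequence and contradicting compactness.

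For part (ii), the plan combines Hardy's inequality with a diagonal argument. By Theorem \ref{teo:q=p}(ii), quasiboundedness already yields continuity of the embedding and $\lambda_p(\Omega)>0$, hence $\mathcal{D}^{1,p}_0(\Omega)=W^{1,p}_0(\Omega)$ by Remark \ref{oss:ugualip}. Let $\{u_n\}$ be bounded in $\mathcal{D}^{1,p}_0(\Omega)$; by the Hardy inequality of Theorem \ref{teo:hardyineq} (extended as in Remark \ref{rem:estensione}), $\{u_n/d_\Omega\}$ is bounded in $L^p(\Omega)$ by some constant $C>0$. Given $\varepsilon>0$, by quasiboundedness there exists $R_\varepsilon>0$ with $d_\Omega\le\varepsilon$ on $\Omega\setminus B_{R_\varepsilon}$, hence
\[
\int_{\Omega\setminus B_{R_\varepsilon}}|u_n|^p\,dx\le\varepsilon^p\int_\Omega\frac{|u_n|^p}{d_\Omega^p}\,dx\le C\,\varepsilon^p,
\]
uniformly in $n$. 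Extending by zero outside $\Omega$, the sequence is bounded in $W^{1,p}(B_{R_\varepsilon+1})$, so classical Rellich--Kondrachov furnishes an $L^p$-convergent subsequence on that ball, and in particular on $\Omega\cap B_{R_\varepsilon}$. A standard diagonal extraction over a sequence $\varepsilon_k\to 0$ then yields a subsequence that is Cauchy in $L^p(\Omega)$.

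For part (iii), I would exhibit the infinite fragile tower $\mathcal{T}$ from the proof of Theorem \ref{teo:q<p}(iii), with $Q=(0,1)^{N-1}$. The estimate $d_\mathcal{T}(x)\le 2^{-k-1}\sqrt{N}$ on $C_k\setminus S_k$ combined with $x_N\ge k$ there gives quasiboundedness immediately. Continuity follows from monotonicity $\mathcal{T}\subset Q\times(0,\infty)$ and a slice Poincar\'e inequality: for every $u\in C^\infty_0(Q\times(0,\infty))$,
\[
\int_{Q\times(0,\infty)}|u|^p\,dx=\int_0^{\infty}\!\!\int_Q|u(x',x_N)|^p\,dx'\,dx_N\le\frac{1}{\lambda_p(Q)}\int_{Q\times(0,\infty)}|\nabla u|^p\,dx,
\]
so that $\lambda_p(\mathcal{T})\ge\lambda_p(Q)>0$. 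For non-compactness, fix $\phi\in C^\infty_0(Q)\setminus\{0\}$ and $\psi\in C^\infty_0((0,1))\setminus\{0\}$ and define $u_n(x',x_N):=\phi(x')\,\psi(x_N-n)$. Since $p\le N$, the finitely many points of $S_n$ meeting $\mathrm{supp}(u_n)$ have zero $p$-capacity and can be excised by standard capacitary cut-offs, producing approximations in $C^\infty_0(\mathcal{T})$ with vanishing $W^{1,p}$-error. Thus $u_n\in W^{1,p}_0(\mathcal{T})=\mathcal{D}^{1,p}_0(\mathcal{T})$, the sequence is bounded there, the supports are pairwise disjoint, and no $L^p(\mathcal{T})$-convergent subsequence can exist.

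The most delicate step is the capacity argument in part (iii): one must build, for each $x\in S_n\cap\mathrm{supp}(u_n)$, a cut-off $\eta_\varepsilon$ vanishing in a small neighborhood of $x$, identically $1$ outside a slightly larger one, with $\|\nabla\eta_\varepsilon\|_{L^p}\to 0$ as $\varepsilon\to 0$. For $p<N$ one uses the usual radial profile built from $|y-x|^{(p-N)/(p-1)}$ truncated at $1$; for $p=N$ one needs a logarithmic cut-off of the form $\log(|y-x|/\varepsilon)/\log(\varepsilon^{-1/2})$, whose $W^{1,N}$-energy vanishes as $\varepsilon\to 0$. Both constructions are classical and underlie exactly the invariance of $W^{1,p}_0$ under removal of polar sets already invoked in Theorem \ref{teo:q<p}(iii).
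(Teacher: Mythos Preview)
Your argument is correct in all three parts. Parts (i) and (iii) follow the paper's approach; your treatment is in fact more explicit, since for quasiboundedness of $\mathcal{T}$ the paper simply quotes Lemma~\ref{lm:controllo} (using $d_\mathcal{T}\in L^1(\mathcal{T})$), and for the removal of the points $S_k$ it cites \cite[Chapter~17]{T} rather than writing out the capacitary cut-offs.

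The only substantive difference is in part (ii). The paper also controls the tail through Hardy's inequality, but applies it to the truncated function $u_n\,\eta_R$ (with $\eta_R$ a smooth cut-off vanishing on $B_R$) and then invokes the Riesz--Fr\'echet--Kolmogorov criterion on $\mathbb{R}^N$, checking uniform smallness of translates from the gradient bound. You instead apply Hardy directly to $u_n$, obtain the uniform tail estimate without any cut-off, and then use the classical Rellich--Kondrachov theorem on the balls $B_{R_\varepsilon+1}$ together with a diagonal extraction. Your route is slightly more elementary in that it avoids both the cut-off and the translate computation; the paper's route packages everything into a single precompactness criterion. Both arguments rest on the same essential input, namely that quasiboundedness combined with Hardy's inequality forces uniform $L^p$-smallness at infinity.
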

	
	\begin{proof}
		\begin{enumerate}
			\item[(i)] This follows from \cite[Example 6.11]{AF}. For completeness, we sketch the idea of the proof: let us suppose that $\Omega$ is not quasibounded. Then there exists a sequence of balls $\{B_r(x_n)\}_{n \in \mathbb{N}} \subseteq\Omega$, with $r>0$ fixed and 
			\[
			\lim_{n\to\infty} |x_n|=+\infty.
			\] 
We consider $\psi \in C^\infty_0(B_1)\setminus\{0\}$ and then we simply set 
\[
\psi_n(x)=\psi\left(\frac{x-x_n}{r}\right),\qquad \mbox{ for } x\in B_r(x_n),\ n\in\mathbb{N}.
\]
It is easily seen that $\{\psi_n\}_{n\in\mathbb{N}}$ is bounded in $\mathcal{D}^{1,p}_0(\Omega)$, but it can not converge in $L^p(\Omega)$;
			\vskip.2cm
			\item[(ii)] this result can be found in \cite[Theorem 2]{A68}, but here we give an alternative proof, which relies on the Hardy inequality of Theorem \ref{teo:hardyineq}. Let $p>N$ and assume that $\Omega$ is quasibounded. By Theorem \ref{teo:q=p}, we already know that $\mathcal{D}^{1,p}_0(\Omega)$ is a functional space, continuously embedded in $L^p(\Omega)$. Let $\{ u_n\}_{n\in\mathbb{N}}\subseteq\mathcal{D}^{1,p}_0(\Omega)$ be a bounded sequence.  We can extend these functions by $0$ outside $\Omega$ and consider them as elements of $W^{1,p}(\mathbb{R}^N)$. In order to apply the classical Riesz–Fréchet–Kolmogorov Theorem, we first observe that by Theorem \ref{teo:q=p} we have that $\{u_n\}_{n\in\mathbb{N}}$ is bounded in $L^p(\mathbb{R}^N)$, as well.
			\par
			Moreover, the bound on the $L^p$ norm of the gradients guarantees that translations converge to $0$ in $L^p(\Omega)$ uniformly in $n$, i.\,e.
			\[
			\lim_{|h|\to 0}\sup_{n\in \mathbb{N}} \int_{\mathbb{R}^N} |u_n(x+h)-u_n(x)|^p \, dx=0 .
			\]
			The crucial point is to exclude the ``loss of mass at infinity''. For this, we exploit the assumption that $\Omega$ is quasibounded.
			The latter entails that for every $\varepsilon>0$, there exists $R>0$ such that 
			\[
			\|d_{\Omega}\|_{L^{\infty}(\Omega\setminus B_R )} <\varepsilon.
			\]
			Let $\eta_R\in C^{\infty}(\mathbb{R}^N)$ be such that
			\[
			0 \le \eta_R \le 1, \quad \eta_R= 1 \mbox{  in }\mathbb{R}^N\setminus B_{R+1},\quad  \eta_R \equiv 0 \mbox{  in } B_R,\quad |\nabla \eta_R | \le C,
			\]
			for some universal constant $C>0$.
			Then 
			\[\sup_{n \in \mathbb{N}} \|\nabla (u_{ n} \eta_R)\|_{L^{p}(\Omega)}\le \sup_{n \in \mathbb{N}} \|\nabla u_n\|_{L^{p}(\Omega)} + C \sup_{n \in \mathbb{N}} \|u_n\|_{L^p(\Omega)}=:M<+\infty.
			\]
			Since the functions $u_n\, \eta_R$ belong to $\mathcal D^{1,p}_0(\Omega)$, by applying H\"older's and Hardy's inequalities (see Theorem \ref{teo:hardyineq} and Remark \ref{rem:estensione}), for every $n\in \mathbb{N}$ we have that
			\[
			\begin{split}
				\left( \int_{\Omega\setminus B_{R+1}} |u_n|^{p} \, dx \right)^\frac{1}{p}
				&\le  \|d_{\Omega}\|_{L^{\infty}(\Omega\setminus B_R)}\,\left( \int_{\Omega} \frac{ |u_n\, \eta_R|^p}{d_{\Omega}^p} \, dx \right)^{\frac{1}{p}} \\   
				&\le \varepsilon \, \mathfrak{h}_p(\Omega)^{-\frac{1}{p}} \left( \int_{\Omega} |\nabla (u_n\, \eta_R) |^{p}\, dx \right)^{\frac{1}{p}} \le \varepsilon \,\mathfrak{h}_p(\Omega)^{-\frac{1}{p}}\, M.
			\end{split}
			\]
			We can thus  appeal to the Riesz–Fréchet–Kolmogorov Theorem and get that, up to a subsequence, $\{ u_n\}_{n\in\mathbb{N}}$ strongly  converges in $L^p(\Omega)$;
			\vskip.2cm	
			\item[(iii)] we consider the set $\mathcal{T}$ defined as in the proof of Theorem \ref{teo:q<p} part (iii). Since $d_{\mathcal{T}} \in L^1(\mathcal{T})$, by applying Lemma \ref{lm:controllo}, we have that  $\mathcal{T}$ is quasibounded. Moreover, the embedding $\mathcal{D}^{1,p}_0(\mathcal{T}) \hookrightarrow L^p(\mathcal{T})$ holds. Indeed, it is sufficient to notice that $\mathcal{T}$ is bounded in one direction. Thus, by Remark \ref{oss:ugualip}, we can infer
			\[
			W^{1,p}_0(\mathcal{T})=\mathcal{D}^{1,p}_0(\mathcal{T}).
			\]
		 However, the embedding $\mathcal{D}^{1,p}_0(\mathcal{T}) \hookrightarrow L^p(\mathcal{T})$ can not be compact.
			Indeed, we take $v\in C^\infty_0(Q \times (0,1))$ not identically zero and we build a bounded sequence $\{v_k\}_{k\in\mathbb{N}}$ by simply translating $v$ in the vertical direction, i.e. for every $k\in\mathbb{N}$ we set
\[
v_k(x',x_N)=v(x',x_N-k),\qquad \mbox{ for every } (x',x_N)\in Q\times (k,k+1).
\]
By appealing again to \cite[Chapter 17]{T}, we have that  
			\[
			\begin{split}
			v_k\in C^\infty_0(Q\times (k,k+1))&\subseteq W^{1,p}_0(Q\times (k,k+1))\\
			&=W^{1,p}_0((Q\times (k,k+1))\setminus S_k) \subseteq W^{1,p}_0(\mathcal{T}) =\mathcal{D}^{1,p}_0(\mathcal{T}),
			\end{split}
			\] 
			for every $k\in\mathbb{N}$. Hence, the sequence $\{v_k\}_{k \in \mathbb{N}}$ is bounded in $\mathcal{D}^{1,p}_0(\mathcal{T})$ and $\|v_k\|_{L^p(\mathcal{T})}>0$ is constant. This shows that $\{v_k\}_{k\in\mathbb{N}}$ can not admit a converging subsequence in $L^p(\mathcal{T})$.
		\end{enumerate}
		\vskip.2cm
		This concludes the proof.
	\end{proof}

	\subsection{The super-homogeneous case $q>p$ and beyond}
	In what follows, for an open set $\Omega\subseteq\mathbb{R}^N$ and for $0<\beta\le 1$, we consider the space 
	\[
	C^{0,\beta}(\overline\Omega)=\Big\{u\in C_{\rm bound}(\overline\Omega)\, :\,[ u ]_{C^{0,\beta}(\overline\Omega)}<+\infty \Big\},
	\] 
	endowed with the standard norm
\[ 
\|u\|_{C^{0,\beta}(\overline\Omega)} = \|u\|_{L^{\infty}(\Omega)} + [ u ]_{C^{0,\beta}(\overline\Omega)},\qquad \mbox{ for every }u\in C^{0,\beta}(\overline\Omega). 
\]
As a consequence of the previous embedding results, we can draw the following picture, for the case $N<p<q$. The proof is essentially an exercise. 
	\begin{cor}
	\label{cor:embeddings}
	Let $p>N$ and let $\Omega\subsetneq\mathbb{R}^N$ be an open set. The following facts hold:
	\vskip.2cm
	\begin{enumerate}
		\item[(i)] if $d_{\Omega} \in L^{\infty}(\Omega)$, then we have
		\[
		\mathcal{D}^{1,p}_0(\Omega)\hookrightarrow L^q(\Omega),\qquad \mbox{ for every } p\le q\le \infty,
		\]
		and
		\[
		\mathcal{D}^{1,p}_0(\Omega)\hookrightarrow C_0(\Omega)\cap C^{0,\beta}(\overline\Omega),\qquad \mbox{ for every }0<\beta\le \alpha_p;\]

		\item[(ii)] if $\Omega$ is quasibounded, then the above embeddings are compact, for 
		\[
		p\le q\le \infty\qquad \mbox{ and }\qquad 0<\beta < \alpha_p;
		\]
		\item[(iii)] if $d_\Omega \in L^\gamma(\Omega)$, for some $1 \le \gamma < \infty$, then we have
		\[
		\mathcal{D}^{1,p}_0(\Omega)\hookrightarrow L^q(\Omega),\qquad \mbox{ for every } \frac{p\,\gamma}{p+\gamma}\le q\le \infty,
		\]
		and such an embedding is compact.
	\end{enumerate}
\end{cor}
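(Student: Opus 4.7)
\medskip

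\noindent\textbf{Proof proposal.} For part \emph{(i)}, the plan is to combine the embedding machinery already developed with a two-step interpolation argument. Since $d_\Omega\in L^\infty(\Omega)$ means $r_\Omega<+\infty$, Theorem \ref{teo:q=p} part \emph{(ii)} gives the continuous embedding $\mathcal{D}^{1,p}_0(\Omega)\hookrightarrow L^p(\Omega)$ together with the quantitative bound $\lambda_p(\Omega)\ge \mathfrak{h}_p(\Omega)/r_\Omega^p$. On the other hand, the sharp Morrey constant $\mathfrak{m}_p(\Omega)=\mathfrak{m}_p(\mathbb{R}^N)$ of Corollary \ref{cor:morrey} is strictly positive, so by density every $u\in\mathcal{D}^{1,p}_0(\Omega)$ admits a representative in $C_0(\Omega)$ satisfying
\[
[u]_{C^{0,\alpha_p}(\overline\Omega)}\le \big(\mathfrak{m}_p(\mathbb{R}^N)\big)^{-1/p}\,\|\nabla u\|_{L^p(\Omega)}.
\]
This yields the two endpoint embeddings $\mathcal{D}^{1,p}_0(\Omega)\hookrightarrow L^p(\Omega)$ and $\mathcal{D}^{1,p}_0(\Omega)\hookrightarrow C_0(\Omega)\cap C^{0,\alpha_p}(\overline\Omega)$. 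The intermediate $L^q$ embeddings for $p<q\le \infty$ and the $C^{0,\beta}$ embeddings for $0<\beta\le \alpha_p$ then follow by applying Lemma \ref{lm:interpol_holder} with $\gamma=p$ and $\alpha=\alpha_p$, together with the standard log-convex interpolation $\|u\|_{L^q}\le \|u\|_{L^p}^{p/q}\,\|u\|_{L^\infty}^{1-p/q}$.

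\medskip

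For part \emph{(ii)}, the $L^p$-compactness is Theorem \ref{teo:compact} part \emph{(ii)}. Given a bounded sequence $\{u_n\}_{n\in\mathbb{N}}\subset\mathcal{D}^{1,p}_0(\Omega)$, the Morrey estimate above shows that $\{u_n\}_{n\in\mathbb{N}}$ is uniformly bounded and equi-H\"older continuous on $\overline\Omega$ with exponent $\alpha_p$; since each $u_n$ may be extended by zero to a function of $C_0(\Omega)$, the generalized Ascoli-Arzel\`a result of Proposition \ref{prop:AscArz} applies and produces a subsequence converging uniformly on $\overline\Omega$ to some $u\in C_0(\Omega)\cap C^{0,\alpha_p}(\overline\Omega)$. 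The compactness in $C^{0,\beta}(\overline\Omega)$ for every $0<\beta<\alpha_p$ is then obtained by interpolating the uniform convergence $\|u_n-u\|_{L^\infty(\Omega)}\to 0$ with the uniform bound on $[u_n-u]_{C^{0,\alpha_p}(\overline\Omega)}$ through Lemma \ref{lm:interpol_holder} (with $\gamma=\infty$, which makes $\theta=(\alpha_p-\beta)/\alpha_p$). Here the subtle point, flagged in Remark \ref{rem:constantvaries}, is that the constant in the interpolation stays bounded as $\alpha$ ranges in $(\beta,1]$, so no blow-up occurs in $\beta\nearrow \alpha_p$ for fixed $\beta$. Finally, compactness in $L^q(\Omega)$ for $p<q<\infty$ is obtained by log-convex interpolation between the $L^p$-convergence and the uniform $L^\infty$-bound coming from \emph{(i)}, while the case $q=\infty$ is the uniform convergence already produced.

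\medskip

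For part \emph{(iii)}, the key observation is that $d_\Omega\in L^\gamma(\Omega)$ with $1\le \gamma<\infty$ already forces $r_\Omega<+\infty$ (Lemma \ref{lemma:r<infty}) and quasiboundedness of $\Omega$ (Lemma \ref{lm:controllo}). In particular $d_\Omega\in L^\infty(\Omega)$, and by the trivial interpolation $\|d_\Omega\|_{L^r}\le \|d_\Omega\|_{L^\gamma}^{\gamma/r}\,\|d_\Omega\|_{L^\infty}^{1-\gamma/r}$ we get $d_\Omega\in L^r(\Omega)$ for every $r\in[\gamma,\infty]$. The range $p\le q\le \infty$ is then immediately covered by part \emph{(ii)} above (with compactness). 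For $\tfrac{p\gamma}{p+\gamma}\le q<p$, a direct computation shows that the corresponding Hardy exponent $\tfrac{p\,q}{p-q}$ ranges exactly in $[\gamma,\infty)$, so the hypothesis of Theorem \ref{teo:q<p} part \emph{(ii)} is met and yields continuity of $\mathcal{D}^{1,p}_0(\Omega)\hookrightarrow L^q(\Omega)$; compactness in this sub-homogeneous regime is then automatic, thanks to the classical equivalence continuous $\Longleftrightarrow$ compact for $q<p$ recalled in the introduction (see \cite[Theorem 15.6.2]{Maz}). The main conceptual obstacle is the bookkeeping at the threshold $q=\tfrac{p\gamma}{p+\gamma}$, which is handled precisely by the interpolation of $d_\Omega$ between $L^\gamma$ and $L^\infty$.
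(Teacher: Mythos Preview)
Your proof is correct and follows essentially the same endpoint-plus-interpolation strategy as the paper. The only notable variations are that you obtain the $L^\infty$ embedding and its compactness via Lemma~\ref{lm:interpol_holder} and Proposition~\ref{prop:AscArz}, whereas the paper uses the Gagliardo--Nirenberg inequality~\eqref{GNS} throughout; and in part~(iii) you invoke the continuous~$\Longleftrightarrow$~compact equivalence for the sub-homogeneous range explicitly, while the paper simply interpolates between the endpoints $q=p\gamma/(p+\gamma)$ and $q=\infty$---both routes are equally valid.
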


\begin{proof}
	\begin{enumerate}
		\item[(i)] Let $d_{\Omega} \in L^{\infty}(\Omega)$. The existence of the embedding $\mathcal{D}^{1,p}_0(\Omega)\hookrightarrow L^p(\Omega)$ is a consequence of Theorem \ref{teo:q=p} part (ii). By using the Gagliardo-Nirenberg interpolation inequality \eqref{GNS} with $q=p$, it follows that $\mathcal{D}^{1,p}_0(\Omega)$ is continuously embedded in every $L^q(\Omega)$ with $p \le q \le \infty$.
\par
As for the embedding in H\"older spaces: we observe at first that from the embedding $\mathcal{D}^{1,p}_0(\Omega)\hookrightarrow L^\infty(\Omega)$, we obtain that each $\{u_n\}_{n\in\mathbb{N}}\subset C^\infty_0(\Omega)$ which is a Cauchy sequence in the norm of $\mathcal{D}^{1,p}_0(\Omega)$, is a Cauchy sequence in the sup norm, as well. Thus, by recalling the definition of the completion space $C_0(\Omega)$, we get that $\mathcal{D}^{1,p}_0(\Omega)\hookrightarrow C_0(\Omega)$. By using this fact
and Corollary \ref{cor:morrey}, we thus get that $\mathcal{D}^{1,p}_0(\Omega)$ is continuously embedded in $C_0(\Omega)\cap C^{0,\alpha_p}(\overline\Omega)$. Then Lemma \ref{lm:interpol_holder} gives the desired conclusion;
		\vskip.2cm
		\item[(ii)] we now suppose that $\Omega$ is quasibounded. In order to prove the first statement, it is sufficient to observe that the embedding $\mathcal{D}^{1,p}_0(\Omega)\hookrightarrow L^q(\Omega)$ is compact for $q=p$ thanks to Theorem \ref{teo:compact} part (ii). By applying again the Gagliardo-Nirenberg inequality \eqref{GNS} with $q=p$, we conclude. 
\par
The case of $C_0(\Omega)\cap C^{0,\beta}(\overline\Omega)$ follows as above, by combining Morrey's inequality and Lemma \ref{lm:interpol_holder};		
			\vskip.2cm
		\item[(iii)] we first recall that the assumption $d_\Omega \in L^\gamma(\Omega)$, for some $1 \le \gamma < \infty$, implies that $\Omega$ is a quasibounded set (see Lemma \ref{lm:controllo}). The compact embedding $\mathcal{D}^{1,p}_0(\Omega)\hookrightarrow L^q(\Omega)$ easily follows by Theorem \ref{teo:q<p} part (ii), when $q=p\,\gamma/(p+\gamma)$, while the case $q=\infty$ was just proved in the part (ii) above. We conclude, by interpolation, that the embedding is compact for every $p\,\gamma/(p+\gamma)\le q\le \infty$.
	\end{enumerate}
	\vskip.2cm
The proof is now complete
\end{proof}
\begin{rem}
It is not difficult to see that the compact embedding of Corollary \ref{cor:embeddings} part (ii) {\it does not} extend up to the borderline case $\beta=\alpha_p=1-N/p$. This can be seen by means of a standard scaling argument: take $\Omega=B_1$ and $\psi\in C^\infty_0(B_1)\setminus\{0\}$. We define the sequence 
\[
\psi_n(x)=n^\frac{N-p}{p}\,\psi(n\,x),\qquad \mbox{ for } n\in\mathbb{N}.
\]
It is easily seen that 
\[
\|\nabla \psi_n\|_{L^p(B_1)}=\|\nabla \psi\|_{L^p(B_1)}\qquad \mbox{ and }\qquad [\psi_n]_{C^{0,\alpha_p}(B_1)}=[\psi]_{C^{0,\alpha_p}(B_1)}.
\]
On the other hand, by construction,  we have that $\psi_n$ converges uniformly to $0$ as $n$ goes to $\infty$, since $N-p<0$. Thus, for this sequence we can not have convergence in the norm of $C^{0,\alpha_p}(\overline{B_1})$.
\end{rem}	
We complete the previous result by giving some geometric estimates for the generalized principal frequencies $\lambda_{p,q}$ in the case $N<p<q$, as well. 
	\begin{cor}
		\label{coro:endpoint}
		Let $N<p< \infty$, $p\le q\le \infty$ and let $\Omega\subsetneq\mathbb{R}^N$ be an open set. We have that 
		\[ 
		\lambda_{p,q}(\Omega)>0 \qquad \Longleftrightarrow \qquad r_{\Omega}<+\infty,
		 \]
and 	
		\begin{equation}
			\label{eq:boundslambdapinfty}
			\frac{\mathfrak{h}_{p,q}(\Omega)}{r_\Omega^{p-N+N\,\frac{p}{q}}}\le \lambda_{p,q}(\Omega)\le \frac{\lambda_{p,q}(B_1)}{r_\Omega^{p-N+N\,\frac{p}{q}}},
		\end{equation}
	with $\mathfrak{h}_{p,q}(\Omega)$ defined in Theorem \ref{teo:hardyineq}.
	Moreover, if $\Omega$ is quasibounded, then there exists $u_{p,q} \in W^{1,p}_0(\Omega)$ which solves
		\begin{equation}
		\label{lambdapinfty}	
		\lambda_{p,q}(\Omega)=\inf_{u\in W^{1,p}_0(\Omega)} \left\{\int_\Omega |\nabla u|^p\,dx\, :\, \|u\|_{L^q(\Omega)}=1\right\}.
		\end{equation}
	 	\end{cor}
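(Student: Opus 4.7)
\emph{Plan of proof.} The strategy is to deduce both halves of the equivalence $\lambda_{p,q}(\Omega)>0\iff r_\Omega<+\infty$ directly from the two-sided estimate \eqref{eq:boundslambdapinfty}, and then to extract a minimizer from the compactness provided by Corollary \ref{cor:embeddings}.

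For the upper bound, I would combine monotonicity of $\lambda_{p,q}$ with respect to set inclusion and the scaling law $\lambda_{p,q}(B_r(x_0))=r^{-(p-N+Np/q)}\,\lambda_{p,q}(B_1)$ (where $Np/q$ is understood as $0$ when $q=\infty$): for every ball $B_r(x_0)\subseteq\Omega$,
\[
\lambda_{p,q}(\Omega)\,r^{p-N+Np/q}\le \lambda_{p,q}(B_1),
\]
and then pass to the supremum over admissible $r$, recalling \eqref{inradius}. Since $p>N$ makes the exponent $p-N+Np/q$ strictly positive, this upper bound alone already forces $\lambda_{p,q}(\Omega)=0$ whenever $r_\Omega=+\infty$.

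For the lower bound, I would set $\gamma:=N/q+(p-N)/p$ (with the conventions $\gamma=1$ when $q=p$ and $\gamma=(p-N)/p$ when $q=\infty$) and use the crude pointwise estimate $d_\Omega\le r_\Omega$ on $\Omega$ to write
\[
\|u\|_{L^q(\Omega)}\le r_\Omega^\gamma\,\left\|\frac{u}{d_\Omega^\gamma}\right\|_{L^q(\Omega)},\qquad u\in C^\infty_0(\Omega),
\]
and then invoke the Hardy inequality of Theorem \ref{teo:hardyineq} to bound the right-hand side by $\mathfrak{h}_{p,q}(\Omega)^{-1/p}\,\|\nabla u\|_{L^p(\Omega)}$. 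The identity $\gamma\,p=p-N+Np/q$ then yields the lower bound in \eqref{eq:boundslambdapinfty} after raising to the $p$-th power and taking the infimum. Since $\mathfrak{h}_{p,q}(\Omega)>0$ by Theorem \ref{teo:hardyineq}, this automatically gives $\lambda_{p,q}(\Omega)>0$ whenever $r_\Omega<+\infty$, completing the equivalence; Lemma \ref{lm:inf} will then let me identify $\lambda_{p,q}(\Omega)$ with the infimum over $W^{1,p}_0(\Omega)$ in \eqref{lambdapinfty}.

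Finally, for the existence of a minimizer under quasiboundedness I would apply the direct method. Quasiboundedness implies $r_\Omega<+\infty$, hence $\lambda_{p,q}(\Omega)>0$, and Remark \ref{oss:ugualip} gives $W^{1,p}_0(\Omega)=\mathcal{D}^{1,p}_0(\Omega)$. A minimizing sequence $\{u_n\}_{n\in\mathbb{N}}\subset W^{1,p}_0(\Omega)$ with $\|u_n\|_{L^q(\Omega)}=1$ is bounded in $\mathcal{D}^{1,p}_0(\Omega)$, so it admits a weak limit $u_{p,q}$; by Corollary \ref{cor:embeddings} part (ii) the embedding $\mathcal{D}^{1,p}_0(\Omega)\hookrightarrow L^q(\Omega)$ is compact for the whole range $p\le q\le\infty$, so a further subsequence converges strongly in $L^q(\Omega)$, preserving the normalization $\|u_{p,q}\|_{L^q(\Omega)}=1$, and weak lower semicontinuity of $u\mapsto\|\nabla u\|_{L^p(\Omega)}^p$ finishes the argument. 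The delicate point will be the endpoint $q=\infty$, where the $L^\infty$-norm fails to be weakly lower semicontinuous along generic $W^{1,p}$-weak limits; here it is crucial that Corollary \ref{cor:embeddings} part (ii) supplies \emph{strong} $L^\infty$ convergence, which is precisely what makes the normalization pass to the limit.
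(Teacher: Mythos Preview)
Your proof is correct and follows essentially the same route as the paper: the upper bound via monotonicity and scaling, the lower bound via the pointwise estimate $d_\Omega\le r_\Omega$ combined with the Hardy inequality of Theorem \ref{teo:hardyineq} (the paper actually leaves these details to the reader, so you have filled them in), Lemma \ref{lm:inf} for the identity \eqref{lambdapinfty}, and the direct method with the compact embeddings of Corollary \ref{cor:embeddings} for existence. Your explicit remark that the endpoint $q=\infty$ requires \emph{strong} $L^\infty$ convergence to preserve the normalization is a useful clarification that the paper does not spell out.
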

	\begin{proof}
	 Let us assume $\lambda_{p,q}(\Omega)>0$ and let $\{B_{r_n}(x_n)\}_{n \in \mathbb{N}} \subseteq \Omega$ be a sequence of balls such that $r_n$ goes to $r_\Omega$, as $n$ goes to $\infty$. As in the proof of Theorem \ref{teo:q=p} part (i), it follows that 
		\[ 
		r_n^{\,p-N+N\frac{p}{q}} \le \frac{\lambda_{p,q}(B_1)}{ \lambda_{p,q}(\Omega)}, 
		 \]
		and, by sending $n$ to $\infty$, we get $r_{\Omega}<+\infty$ and the upper bound in \eqref{eq:boundslambdapinfty}. 
		\par
		In order to prove the reverse implication, we first observe that this has already been proved in Theorem \ref{teo:q=p} for the case $q=p$ part (ii). For the case $p<q\le \infty$, it is sufficient to use the same argument, in conjuction with the general Hardy inequality of Theorem \ref{teo:hardyineq}. This comes with the lower bound in \eqref{eq:boundslambdapinfty}. We leave the details to the reader. 
\par
We now come to the existence part, under the stronger assumption that $\Omega$ is quasibounded. We first observe that the identity \eqref{lambdapinfty} follows from Lemma \ref{lm:inf}. Moreover, the assumption on $\Omega$, Theorem \ref{teo:q=p} and Remark \ref{oss:ugualip} guarantee that we have 
\[
\mathcal{D}^{1,p}_0(\Omega)=W^{1,p}_0(\Omega),
\]
thanks to Proposition \ref{prop:spazi0}. The existence of a minimizer is now an easy consequence of the Direct Method in the Calculus of Variations, once observed that $W^{1,p}_0(\Omega)$ is weakly closed and that we have the compact embeddings of Corollary \ref{cor:embeddings} at our disposal.
	\end{proof}
	\begin{rem}\label{rem:hyndlind}
		We notice that the value of $\lambda_{p,\infty}(B_1)$ can be made explicit: according to \cite[Theorem 2E]{Tal} we have 
		\[
		\lambda_{p,\infty}(B_1)=\left(\frac{p-N}{p-1}\right)^{p-1}\,N\,\omega_N,\qquad \mbox{ for } p>N.
		\]
		This implies that the upper bound for the sharp Morrey constant in \eqref{eq:mp} can be rewritten as
		\[ \mathfrak{m}_{p}(\mathbb{R}^N)\le \lambda_{p, \infty}(B_1). \] 
		Moreover, such a value is uniquely attained by the functions
		\[
			u(x)=\pm \left(1-|x|^\frac{p-N}{p-1}\right)_+.
		\]
		We refer to \cite{EP, HL} for a thorough study of the variational problem associated to $\lambda_{p,\infty}$, in the case of bounded sets.
	\end{rem}

	\section{Asymptotics}\label{Sec:asymp}
	
	\subsection{Asymptotics for $\lambda_{p,q}(\Omega)$}
	
	\begin{cor}\label{teo:limite}  
		Let $1\le q< \infty$ and let $\Omega\subsetneq \mathbb{R}^N$ be an open set. Then 
		\[
		\begin{split} 
		\lim_{p \to \infty} \Big( \lambda_{p,q}(\Omega) \Big)^\frac{1}{p} = \frac{1}{\|d_\Omega\|_{L^q(\Omega)}},
		\end{split}
		\]
		and 
		\[
		\lim_{p\to\infty} \Big(\lambda_{p,\infty}(\Omega)\Big)^\frac{1}{p}=\frac{1}{r_{\Omega}}.
		\]
		In the previous equations, the right-hand sides have to be considered $0$, if $d_\Omega\not\in L^q(\Omega)$ or $r_\Omega=+\infty$, respectively.
	\end{cor}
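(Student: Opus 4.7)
The plan is to prove both limits by matching upper and lower bounds for $\lambda_{p,q}(\Omega)^{1/p}$, working in parallel on the cases $q<\infty$ and $q=\infty$, with the endpoint case being the slightly simpler of the two. Throughout, $p$ is eventually larger than $\max(N,q)$, so the Morrey/Hardy machinery of Section~\ref{sez4} is available.

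\emph{Upper bound.} For $q<\infty$ I truncate to $\Omega_R:=\Omega\cap B_R$ and use the Lipschitz test function $u_{R,\varepsilon}:=(d_{\Omega_R}-\varepsilon)_+$, whose level set $\{d_{\Omega_R}\ge\varepsilon\}$ is compactly contained in $\Omega_R$; hence $u_{R,\varepsilon}\in W^{1,p}_0(\Omega_R)\subseteq W^{1,p}_0(\Omega)$ after zero extension, and $|\nabla u_{R,\varepsilon}|\le 1$ a.e. By Lemma~\ref{lm:inf},
\[
\lambda_{p,q}(\Omega)^{1/p}\le \frac{|\Omega_R|^{1/p}}{\|u_{R,\varepsilon}\|_{L^q(\Omega_R)}}.
\]
Passing to the limit in the order $p\to\infty$ (so $|\Omega_R|^{1/p}\to 1$, since $|\Omega_R|<\infty$), then $\varepsilon\to 0$ (dominated convergence gives $\|u_{R,\varepsilon}\|_{L^q(\Omega_R)}\to\|d_{\Omega_R}\|_{L^q(\Omega_R)}$), then $R\to\infty$ (monotone convergence, exploiting the identity $d_{\Omega_R}(x)=d_\Omega(x)$ whenever $|x|+d_\Omega(x)<R$, which makes $R\mapsto \|d_{\Omega_R}\|_{L^q(\Omega_R)}$ increase to $\|d_\Omega\|_{L^q(\Omega)}$) produces $\limsup_p\lambda_{p,q}(\Omega)^{1/p}\le 1/\|d_\Omega\|_{L^q(\Omega)}$, with the convention $1/\infty=0$. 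For $q=\infty$ a shorter scaling argument suffices: for any inscribed ball $B_r(x_0)\subseteq\Omega$, $\lambda_{p,\infty}(\Omega)^{1/p}\le r^{N/p-1}\lambda_{p,\infty}(B_1)^{1/p}$, and combining with $\lambda_{p,\infty}(B_1)^{1/p}\to 1$ (from the explicit value in Remark~\ref{rem:hyndlind}) and then sending $r\to r_\Omega$ yields $\limsup_p \lambda_{p,\infty}(\Omega)^{1/p}\le 1/r_\Omega$.

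\emph{Lower bound.} When $d_\Omega\notin L^q(\Omega)$ (respectively when $r_\Omega=\infty$), the upper bound already forces $\lim=0=1/\infty$, so only the case $d_\Omega\in L^q(\Omega)$ remains; this assumption automatically implies $r_\Omega<\infty$, because otherwise an arbitrarily large inscribed ball would make $\int d_\Omega^q$ diverge. For $q=\infty$, the pointwise Morrey estimate \eqref{base} from the proof of Theorem~\ref{teo:hardyineq} yields $\|\psi\|_{L^\infty(\Omega)}\le \mu_p(B_1)^{-1/p}\,r_\Omega^{1-N/p}\,\|\nabla\psi\|_{L^p(\Omega)}$ and hence $\lambda_{p,\infty}(\Omega)^{1/p}\ge \mu_p(B_1)^{1/p}/r_\Omega^{1-N/p}\to 1/r_\Omega$ via Lemma~\ref{lm:morreyA}. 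For $q<\infty$ and $p>\max(N,q)$, I combine the Hardy inequality of Theorem~\ref{teo:hardyineq} with H\"older's inequality on exponents $p/q$ and $p/(p-q)$:
\[
\int_\Omega|\psi|^q\,dx=\int_\Omega\Big(\tfrac{|\psi|}{d_\Omega}\Big)^q d_\Omega^q\,dx\le \mathfrak{h}_p(\Omega)^{-q/p}\,\|\nabla\psi\|_{L^p(\Omega)}^q\,\|d_\Omega\|_{L^{pq/(p-q)}(\Omega)}^q,
\]
which gives $\lambda_{p,q}(\Omega)^{1/p}\ge \mathfrak{h}_p(\Omega)^{1/p}/\|d_\Omega\|_{L^{pq/(p-q)}(\Omega)}$. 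Since $\mathfrak{h}_p(\Omega)^{1/p}\to 1$ by \eqref{hardyinfty} and $pq/(p-q)\searrow q$, the proof reduces to the norm convergence $\|d_\Omega\|_{L^{pq/(p-q)}(\Omega)}\to \|d_\Omega\|_{L^q(\Omega)}$.

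\emph{Main obstacle.} This last convergence of Lebesgue norms as the exponent decreases to $q$ is the subtle step. The integrand $d_\Omega^{pq/(p-q)}$ certainly converges pointwise to $d_\Omega^q$, but on the set $\{d_\Omega\le 1\}$ (which can have infinite measure, as for a horn or a disjoint union of small balls) the sequence approaches its limit from above, so neither monotone convergence nor the naive bound $d_\Omega^{pq/(p-q)}\le\max(1,d_\Omega^q)$ provides an integrable dominant. The resolution exploits the automatic finiteness of $r_\Omega$ derived above: from $d_\Omega\le r_\Omega$ one obtains, for any fixed $q_0>q$ and all $s\in[q,q_0]$, the pointwise bound $d_\Omega^s\le r_\Omega^{s-q}\,d_\Omega^q\le \max(1,r_\Omega^{q_0-q})\,d_\Omega^q$, which is uniformly integrable in $s$, so dominated convergence closes the argument.
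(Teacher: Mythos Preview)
Your proof is correct. The lower bound (Hardy $+$ H\"older) and the dominated-convergence step for $\|d_\Omega\|_{L^{pq/(p-q)}}\to\|d_\Omega\|_{L^q}$ match the paper exactly, as does the treatment of $q=\infty$. The genuine difference is in the upper bound for $q<\infty$: the paper does not build a test function but instead invokes the quantitative estimate \eqref{eq:pqstima1}, namely $\lambda_{p,q}(\Omega)\,\big(\int_\Omega d_\Omega^{pq/(p-q)}\big)^{(p-q)/q}\le\lambda_p(B_1)$, which comes from the pointwise Lane--Emden comparison $d_\Omega(x)^{p/(p-q)}\,w^{B_1}_{p,q}(0)\le w^\Omega_{p,q}(x)$ of Theorem~\ref{teo:q<p}(i) (relying on \cite{BPZ}), and then passes to the limit using $\lambda_p(B_1)^{1/p}\to 1$; the case $d_\Omega\notin L^q$ is handled separately by truncating $\Omega$. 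Your approach via the test function $(d_{\Omega_R}-\varepsilon)_+$ is more elementary---it bypasses the Lane--Emden comparison principle entirely and treats the two subcases $d_\Omega\in L^q$ and $d_\Omega\notin L^q$ in one stroke---whereas the paper's route has the advantage of yielding the sharp two-sided bound \eqref{eq:pqstima1}--\eqref{eq:pqstima2} with constants that do not degenerate as $p\to\infty$, a fact reused later (e.g.\ for the pointwise convergence $w^\Omega_{p,q}\to d_\Omega$ in Theorem~\ref{teo:asymp}).
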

	
	\begin{proof}
		We start with the case $q=\infty$. If $r_{\Omega}=+\infty$, thanks to Corollary \ref{coro:endpoint}, there is nothing to prove. Let us assume $r_{\Omega}<+\infty$, it is sufficient to take the $p-$rooth in \eqref{eq:boundslambdapinfty} and use that
		\[ 
		\lim_{p \to \infty} \Big(\lambda_{p,\infty}(B_1)\Big)^\frac{1}{p} = 1, 
		\]
		(see Remark \ref{rem:hyndlind}) and \eqref{hardyinfty}. This gives the desired conclusion as $p$ goes to $\infty$.
		\vskip.2cm\noindent
		We now consider the case $q<\infty$. We first suppose that $d_\Omega\in L^q(\Omega)$. Observe that for every $p>2\,q$, we have 
		\[
		d_{\Omega}(x)^{\frac{p\,q}{p-q}} \le r_{\Omega}^{\frac{q^2}{p-q}}\, d_{\Omega}(x)^q\le \Big(\max\{1,r_\Omega\}\Big)^q\,d_\Omega(x)^q,\qquad \mbox{ for every } x\in\Omega,
		\] 
		thus we can apply the Dominated Convergence Theorem to get that 
		\begin{equation}
			\label{convpdist} 
			\lim_{p\to \infty} \left(\int_{\Omega} d_{\Omega}^{\frac{p\,q}{p-q}} \, dx\right)^\frac{p-q}{q\,p}=\left(\int_{\Omega} d^{\,q}_{\Omega}\, dx\right)^\frac{1}{q}.
		\end{equation}
		Moreover, by Theorem \ref{teo:q<p}, for every $p>q$ and $p>N$, we have the two-sided estimate 
		\[
		\mathfrak{h}_p(\Omega)\le \lambda_{p,q}(\Omega) \,\left(\displaystyle\int_{\Omega} d_{\Omega}^{\frac{p\,q}{p-q}} \, dx \right)^{\frac{p-q}{q}} \le \lambda_p(B_1).
		\]
		By raising this estimate to the power $1/p$, using \eqref{hardyinfty}, \eqref{convpdist} and the following fact
		\[
		\lim_{p\to\infty} \Big(\lambda_p(B_1)\Big)^\frac{1}{p}=1,
		\] 
		(see \cite[Lemma 1.5]{JLM}), we get the desired conclusion.
		\par
		We now suppose that $d_\Omega \notin L^q(\Omega)$. Let $n_0 \in \mathbb{N}$ such that  $\Omega_n:=\Omega\cap B_n \ne \emptyset$ for every $n \ge n_0$. By applying the first part of this proof to the set $\Omega_n$ with $n\ge n_0$, we have that 
		\[
		\lim_{p \to \infty} \Big( \lambda_{p,q}(\Omega_n) \Big)^\frac{1}{p} =\frac{1}{\| d_{\Omega_n} \|_{L^q(\Omega_n)}}.
		\]
		Hence,  by using the  monotonicity of $\lambda_{p,q}$ with respect to the set inclusion,  we get that 	
		\begin{equation}
			\label{eq:lapR}
			\limsup_{p \to \infty} \Big( \lambda_{p,q}(\Omega) \Big)^\frac{1}{p} \le  \frac{1}{\| d_{\Omega_n} \|_{L^q(\Omega_n)}},
		\end{equation}
		for every $n\ge n_0$. 
		We extend each distance function $d_{\Omega_n}$ equal to $0$ in $\mathbb{R}^N\setminus \Omega_n$.
		We note that the family $\{d_{\Omega_n} \}_{n \ge n_0}$ is not decreasing with respect to $n$. Thus, in order to conclude, it is sufficient to prove  that
		\begin{equation}\label{supdR}
			\lim_{n \to \infty} d_{\Omega_n}(x) = d_\Omega(x),\qquad \mbox{ for every } x\in\Omega.
		\end{equation}
		Indeed, by passing to the limit in \eqref{eq:lapR} as $n$ goes to $\infty$ and by using Monotone Convergence Theorem, we get that  
		\[
		\limsup_{p \to \infty} \Big( \lambda_{p,q}(\Omega) \Big)^\frac{1}{p} =0.\] 
		In order to show \eqref{supdR}, 
		we note that, for every $x\in \Omega$, there exists $n_x\ge n_0$ such that $B_{d_{\Omega}(x)}(x) \subseteq\Omega_n$, for every $n\ge n_x$. This implies that  
		\[ 
		d_{\Omega}(x) = d_{\Omega_n}(x), \quad \mbox{ for every } n \ge n_x.
		\]
		This concludes the proof.	
	\end{proof} 

	\begin{cor}\label{teo:limiteq}  
		Let $N< p< \infty$ and let $\Omega\subsetneq\mathbb{R}^N$  be an open set. Then 
		\[ \lim_{q \to \infty}  \lambda_{p,q}(\Omega) = \lambda_{p,\infty}(\Omega).\]		
	\end{cor}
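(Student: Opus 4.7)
The plan is to prove the two inequalities $\limsup_{q\to\infty} \lambda_{p,q}(\Omega) \le \lambda_{p,\infty}(\Omega)$ and $\liminf_{q\to\infty} \lambda_{p,q}(\Omega) \ge \lambda_{p,\infty}(\Omega)$ separately. The case $r_\Omega = +\infty$ is trivial, since by Corollary \ref{coro:endpoint} both sides vanish, so I may focus on the case $r_\Omega < +\infty$, in which $\lambda_{p,\infty}(\Omega)>0$ and, since $p>N$, also $\lambda_p(\Omega)>0$ thanks to Theorem \ref{teo:q=p}.

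For the $\limsup$ direction, I would use test functions. For any $\psi\in C^\infty_0(\Omega)\setminus\{0\}$, the compactness of the support and the boundedness of $\psi$ give $\lim_{q\to\infty}\|\psi\|_{L^q(\Omega)}=\|\psi\|_{L^\infty(\Omega)}$. Plugging $\psi$ into the definition of $\lambda_{p,q}(\Omega)$ and letting $q\to\infty$ yields
\[
\limsup_{q\to\infty}\lambda_{p,q}(\Omega)\le \frac{\int_\Omega|\nabla\psi|^p\,dx}{\|\psi\|_{L^\infty(\Omega)}^p}.
\]
Taking the infimum over $\psi$ then gives $\limsup_{q\to\infty}\lambda_{p,q}(\Omega)\le\lambda_{p,\infty}(\Omega)$.

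For the $\liminf$ direction, the key step is the following quantitative estimate: for every $N<p<q<\infty$,
\[
\lambda_{p,q}(\Omega)\ge \Big(\lambda_p(\Omega)\Big)^{\frac{p}{q}}\,\Big(\lambda_{p,\infty}(\Omega)\Big)^{\frac{q-p}{q}}.
\]
To establish it, I would take an arbitrary $\psi\in C^\infty_0(\Omega)$ with $\|\psi\|_{L^q(\Omega)}=1$ and begin with the elementary interpolation
\[
1=\int_\Omega|\psi|^q\,dx=\int_\Omega|\psi|^{q-p}\,|\psi|^p\,dx\le \|\psi\|_{L^\infty(\Omega)}^{q-p}\,\|\psi\|_{L^p(\Omega)}^p.
\]
Next, I would apply the Poincar\'e inequality $\|\psi\|_{L^p(\Omega)}\le \lambda_p(\Omega)^{-1/p}\,\|\nabla\psi\|_{L^p(\Omega)}$ and the endpoint Morrey--type inequality $\|\psi\|_{L^\infty(\Omega)}\le \lambda_{p,\infty}(\Omega)^{-1/p}\,\|\nabla\psi\|_{L^p(\Omega)}$, both of which follow directly from the definitions of $\lambda_p(\Omega)$ and $\lambda_{p,\infty}(\Omega)$ (the latter being finite by Corollary \ref{coro:endpoint}). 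Inserting these into the interpolation bound gives
\[
1\le \lambda_{p,\infty}(\Omega)^{-\frac{q-p}{p}}\,\lambda_p(\Omega)^{-1}\,\|\nabla\psi\|_{L^p(\Omega)}^{q},
\]
and solving for $\|\nabla\psi\|_{L^p(\Omega)}^p$ and then taking the infimum over $\psi$ yields the claimed inequality.

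Passing to the limit $q\to\infty$ in the displayed bound closes the argument, since $p/q\to 0$ and $(q-p)/q\to 1$. The main obstacle is really only bookkeeping: the key insight (interpolating between $L^p$ and $L^\infty$, then invoking both the Poincar\'e and the Morrey--type Sobolev inequalities that hold on $\Omega$ precisely because $r_\Omega<+\infty$ and $p>N$) makes the proof essentially mechanical; there is no compactness step needed and the argument is uniform in $\Omega$, so it applies verbatim to every open set $\Omega\subsetneq\mathbb{R}^N$.
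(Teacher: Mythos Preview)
Your proof is correct and follows essentially the same approach as the paper: both establish the $\limsup$ inequality by testing with a fixed $\psi\in C^\infty_0(\Omega)$ and letting $q\to\infty$, and both obtain the $\liminf$ inequality via the quantitative bound $\lambda_{p,q}(\Omega)\ge \lambda_p(\Omega)^{p/q}\,\lambda_{p,\infty}(\Omega)^{(q-p)/q}$, derived from the interpolation $\|\psi\|_{L^q}\le\|\psi\|_{L^p}^{p/q}\|\psi\|_{L^\infty}^{1-p/q}$ combined with the definitions of $\lambda_p(\Omega)$ and $\lambda_{p,\infty}(\Omega)$. The only cosmetic difference is that you dispose of the degenerate case $r_\Omega=+\infty$ upfront via Corollary~\ref{coro:endpoint}, whereas the paper handles the equivalent case $\lambda_{p,\infty}(\Omega)=0$ after the $\limsup$ step by observing that the latter already forces the limit to vanish.
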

	
	\begin{proof}
	Let $\psi\in C^\infty_0(\Omega)\setminus\{0\}$.  By definition of $\lambda_{p,q}(\Omega)$ we have that, for every $q\geq p$, it holds
	\[
\lambda_{p,q}(\Omega)\le \frac{\displaystyle\int_\Omega |\nabla \psi|^p\,dx}{\left(\displaystyle\int_\Omega |\psi|^q\,dx\right)^\frac{p}{q}}.
\]
If we now take the limit as $q$ goes to $\infty$, we get
\[
\limsup_{q\to \infty} \lambda_{p,q}(\Omega)\le \lim_{q\to \infty }\frac{\displaystyle\int_\Omega |\nabla \psi|^p\,dx}{\left(\displaystyle\int_\Omega |\psi|^q\,dx\right)^\frac{p}{q}}=\frac{\displaystyle\int_\Omega |\nabla \psi|^p\,dx}{\|\psi\|^{\,p}_{L^\infty(\Omega)}}.
\]
By arbitrariness of $\psi$ and recalling the definition of $\lambda_{p,\infty}(\Omega)$, we obtain
\[
\limsup_{q\to \infty} \lambda_{p,q}(\Omega)\le \lambda_{p,\infty}(\Omega).
\]
In order to show the converse inequality, we can assume that  $\lambda_{p,\infty}(\Omega)>0$, otherwise from the previous inequality we already get the desired result. Since $p>N$, by Corollary \ref{coro:endpoint} we have that $r_{\Omega}<+\infty$ and thus $\lambda_p(\Omega)>0$, as well.
Then, for every $u \in C^\infty_0(\Omega)\setminus\{0\}$ and for every $p< q<\infty$ it holds
		\[
		\|u\|_{L^q(\Omega)}\le \|u\|_{L^{p}(\Omega)}^\frac{p}{q}\,\|u\|_{L^\infty(\Omega)}^{1-\frac{p}{q}}\le
		\Big(\lambda_{p}(\Omega)\Big)^{-\frac{1}{q}}\,\|\nabla u\|_{L^p(\Omega)}^\frac{p}{q}\,\|u\|_{L^\infty(\Omega)}^{1-\frac{p}{q}},
		\]
		by interpolation in Lebesgue spaces. Hence, we have the following lower bound
		\[
		\begin{split}
			\frac{\|\nabla u\|_{L^p(\Omega)}}{ \|u\|_{L^q(\Omega)} }\ge \Big( \lambda_{p}(\Omega)\Big)^{\frac{1}{q}}\,\frac{\|\nabla u\|^{1-\frac{p}{q}}_{L^p(\Omega)} }{\|u\|_{L^\infty(\Omega)}^{1-\frac{p}{q}}} \ge \Big( \lambda_{p}(\Omega)\Big)^{\frac{1}{q}}\,\Big(\lambda_{p,\infty}(\Omega)\Big)^{\frac{q-p}{p\,q}}.
		\end{split}
		\]
		By raising to the power $p$ on both sides and taking the infimum on $C^\infty_0(\Omega)\setminus\{0\}$ on the left-hand side, this yields
		\[
			\lambda_{p,q}(\Omega)\ge \Big(\lambda_{p}(\Omega)\Big)^{\frac{p}{q}}\,\Big(\lambda_{p,\infty}(\Omega)\Big)^{\frac{q-p}{q}}. 
		\]
By sending $q$ to $\infty$ in this inequality, we get
\[
\liminf_{q\to \infty} \lambda_{p,q}(\Omega)\ge \lambda_{p,\infty}(\Omega),
\]
as desired.
\end{proof}

	\subsection{Asymptotics for the solution of the Lane-Emden equation}
	
Let $\Omega \subsetneq \mathbb{R}^N$ be an open connected set. In this subsection we will assume that $1\le q<\infty$ and $d_{\Omega} \in L^q(\Omega)$. By Lemma \ref{lemma:r<infty}, the last assumption entails that $d_{\Omega} \in L^{\infty}(\Omega)$, as well. Thus, by interpolation, we have that $d_{\Omega}\in L^{(p\,q)/(p-q)}(\Omega)$ for every $q<p<\infty$. Hence, by Theorem \ref{teo:q<p}, $\Omega$ is $(p,q)-$admissible for every $p>\max\{N,q\}$ and, by \cite[Corollary 4.4]{BPZ}, there exists a unique positive solution $w_{p,q}^\Omega $ to \eqref{eq:cauchy} for every $q<p$. In the following theorem, we will study the asymptotic behavior of $w_{p,q}^{\Omega}$, as $p$ goes to $\infty$. 
	
	\begin{thm}\label{teo:asymp}
		Let $1\le q < \infty$ and let $\Omega \subsetneq \mathbb{R}^N$ be an open connected set such that $d_{\Omega}\in L^q(\Omega)$. Then 
		\begin{equation}
			\label{convw}
			\lim_{p\to\infty} \|w_{p,q}^{\Omega}-d_\Omega\|_{L^r(\Omega)}=0\qquad \mbox{ and }\qquad \lim_{p\to\infty} \|w_{p,q}^{\Omega}-d_\Omega\|_{C^{0,\beta}(\overline\Omega)}=0
		\end{equation}
		for every $q\le r\le \infty$ and every $0<\beta<1$.
	\end{thm}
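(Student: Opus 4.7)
The plan is to establish a two-sided pointwise sandwich of $u_p := w_{p,q}^\Omega$ between two powers of $d_\Omega$ that both converge uniformly to $d_\Omega$, and then upgrade the ensuing $L^\infty$ convergence to the claimed $L^r$ and $C^{0,\beta}$ convergences. As preparation, note that $d_\Omega\in L^q(\Omega)$ implies $r_\Omega<+\infty$ by Lemma \ref{lemma:r<infty}, and combining \eqref{pqnorm} with Corollary \ref{teo:limite} yields
\[
\|u_p\|_{L^q(\Omega)}=\bigl(\lambda_{p,q}(\Omega)\bigr)^{-\frac{1}{p-q}}\to \|d_\Omega\|_{L^q(\Omega)}\quad\mbox{and}\quad \|\nabla u_p\|_{L^p(\Omega)}=\bigl(\lambda_{p,q}(\Omega)\bigr)^{-\frac{q}{p(p-q)}}\to 1,
\]
as $p\to\infty$.

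For the lower bound, I evaluate \eqref{eq:lowest} at the current point with $r=d_\Omega(x)$ to obtain $u_p(x)\ge w_{p,q}^{B_1}(0)\,d_\Omega(x)^{p/(p-q)}$, where $w_{p,q}^{B_1}(0)\to 1$ by Proposition \ref{prop:convcase}. For the upper bound, I extend $u_p$ by zero outside $\Omega$ and apply the Morrey-type pointwise inequality \eqref{base} at the nearest boundary point of $x$ (where the extension vanishes), getting
\[
u_p(x)\le \bigl(\mu_p(B_1)\bigr)^{-1/p}\,\|\nabla u_p\|_{L^p(\Omega)}\,d_\Omega(x)^{1-N/p},
\]
whose prefactor also converges to $1$ by Lemma \ref{lm:morreyA}. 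Since $0\le d_\Omega\le r_\Omega<\infty$, an elementary calculus exercise shows that $s\mapsto s^{p/(p-q)}$ and $s\mapsto s^{1-N/p}$ converge to $s$ uniformly on $[0,r_\Omega]$; combined with the prefactor limits, the sandwich collapses into $\|u_p-d_\Omega\|_{L^\infty(\Omega)}\to 0$. This is the main technical point of the proof: the boundedness of $d_\Omega$ on the possibly infinite-measure set $\Omega$ is precisely what turns pointwise power convergence into uniform convergence, and is therefore essential to make the sandwich useful.

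The $L^\infty$ bound gives in particular $u_p\to d_\Omega$ pointwise on $\Omega$, and together with $\|u_p\|_{L^q(\Omega)}\to \|d_\Omega\|_{L^q(\Omega)}$ the Brezis-Lieb lemma yields $\|u_p-d_\Omega\|_{L^q(\Omega)}\to 0$. The remaining $L^r$ cases follow from the interpolation
\[
\|u_p-d_\Omega\|_{L^r(\Omega)}^r\le \|u_p-d_\Omega\|_{L^\infty(\Omega)}^{r-q}\,\|u_p-d_\Omega\|_{L^q(\Omega)}^q,
\]
valid for $q\le r\le \infty$. Finally, for the Hölder statement I apply Lemma \ref{lm:interpol_holder} with $\gamma=q$ and $\alpha=\alpha_p$, taking $p$ large enough that $\alpha_p>\beta$. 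The seminorm $[u_p-d_\Omega]_{C^{0,\alpha_p}(\overline\Omega)}$ is uniformly bounded in $p$, using Corollary \ref{cor:morrey} together with the bound on $\|\nabla u_p\|_{L^p(\Omega)}$ for $u_p$, and Lemma \ref{lm:distholder} for $d_\Omega$; moreover, by Remark \ref{rem:constantvaries}, the interpolation constant stays uniformly bounded in the relevant range. Since the $L^q$ factor converges to zero, $[u_p-d_\Omega]_{C^{0,\beta}(\overline\Omega)}\to 0$, which combined with the $L^\infty$ convergence of the previous step yields the full $C^{0,\beta}(\overline\Omega)$ convergence.
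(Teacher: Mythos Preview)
Your proof is correct and uses the same ingredients as the paper: the lower bound \eqref{eq:lowest} from the Lane--Emden comparison, the Morrey--type upper bound \eqref{base}/\eqref{eq:morreylemma4.1}, the passage from pointwise convergence plus norm convergence to $L^q$ convergence, and Lemma \ref{lm:interpol_holder} for the $C^{0,\beta}$ seminorm.

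The one genuine difference is the order in which the convergences are obtained. You extract the $L^\infty$ convergence \emph{directly} from the two-sided sandwich, by noting that the maps $s\mapsto s^{1-N/p}$ and $s\mapsto s^{p/(p-q)}$ converge to the identity uniformly on the bounded interval $[0,r_\Omega]$; the paper instead only reads pointwise convergence off the sandwich, then proves the $L^q$ convergence, and finally deduces the $L^\infty$ convergence \emph{a posteriori} from Lemma \ref{lm:interpol_holder}. Your shortcut is slightly more elementary and makes clear that the interpolation Lemma \ref{lm:interpol_holder} is really only needed for the H\"older seminorm, not for the sup norm; the paper's route, on the other hand, handles $L^\infty$ and $C^{0,\beta}$ in a single stroke via that lemma. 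Both arguments rely on the same key facts ($r_\Omega<+\infty$, $\|\nabla u_p\|_{L^p}\to 1$, the asymptotics of $\mu_p(B_1)$ and of $w_{p,q}^{B_1}(0)$), so the difference is one of arrangement rather than of substance.
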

	
	\begin{proof}
		We will first show that \eqref{convw} holds to $r=q$. Then, by interpolation, we will obtain all the other claimed convergences.	
		\vskip.2cm\noindent   
		{\it Part 1: convergence in $L^{q}(\Omega)$.}
		We extend each function $w_{p,q}^\Omega $  to $\mathbb{R}^N$ by setting it to be zero in $\mathbb{R}^N\setminus \Omega$.
		First of all, we note that, by using \eqref{pqnorm} and Corollary \ref{teo:limite}, we have 
				\begin{equation}
			\label{pgradnorm}  
			\lim_{p\to \infty}\int_\Omega |\nabla w_{p,q}^{\Omega}|^{p} \, dx =\lim_{p\to\infty} \int_\Omega |w_{p,q}^{\Omega}|^{q} \, dx = \lim_{p\to\infty} \left (\frac{1}{\Big(\lambda_{p,q}(\Omega)\Big)^\frac{1}{p}}\right)^\frac{p\,q}{p-q}=\int_\Omega d_\Omega^{\,q}\,dx,
		\end{equation}
		which implies 
		\begin{equation}\label{eq:bound1}
			\lim_{p\to \infty} \|\nabla w_{p,q}^\Omega | |_{L^p(\Omega)} = 1.
		\end{equation}
				Moreover, by applying \eqref{eq:morreylemma4.1}, we find the upper bound   
		\begin{equation}\label{eq:up}
			0< w_{p,q}^\Omega (x) \le \frac{d_{\Omega}(x)^{\alpha_ p}}{\Big(\mu_p(B_1)\Big)^{\frac{1}{p}}}\, \|\nabla w_{p,q}^\Omega \|_{L^p(\Omega)}, \qquad \mbox{ for every } x\in\Omega,
		\end{equation}
		where $\alpha_p=1-N/p$. On the other hand, thanks to \eqref{eq:lowest}, we obtain the lower bound
		\begin{equation}\label{eq:low}
			(d_{\Omega}(x))^{\frac{p}{p-q}} \, w_{p,q}^{B_1}(0) \le w_{p,q}^{\Omega}(x), \quad \mbox{ for every } x \in \Omega. 
		\end{equation}
		By sending $p$ to  $\infty$ in \eqref{eq:up} and \eqref{eq:low} and taking into account \eqref{eq:limitemup}, \eqref{eq:bound1} and \eqref{eq:behaviorpalla}, we get that 
		\[ 
		\lim_{p\to \infty}w_{p,q}^\Omega (x) = d_{\Omega}(x), \quad \mbox{ for every } x\in\Omega.
		\]
		The  pointwise convergence, combined with the convergence of the $L^q$ norm given by  \eqref{pgradnorm},  implies that
		\[
		\lim_{p\to\infty} \|w_{p,q}^\Omega -d_\Omega\|_{L^q(\Omega)}=0.
		\]
		{\it Part 2: convergence in $L^{\infty}(\Omega)$.}
By Corollary \ref{cor:embeddings}, we have that $w^\Omega_{p,q}\in C_0(\Omega)\cap C^{0,\alpha_p}(\overline{\Omega})$. 
	Moreover, by applying the estimate on the sharp Morrey constant of Corollary \ref{cor:morrey}, we have  that $w_{p,q}^\Omega$ satisfies 
	\[
	[w^\Omega_{p,q}]_{C^{0,\alpha_p}(\overline\Omega)}\le \left(\frac{1}{\mathfrak{m}_p(\Omega)}\right)^\frac{1}{p}\,\|\nabla w^\Omega_{p,q}\|_{L^p(\Omega)}.
	\]
	By using \eqref{eq:bound1} and Corollary \ref{cor:morrey}, we have 
	\[
	\limsup_{p\to\infty}\,[w^\Omega_{p,q}]_{C^{0,\alpha_p}(\overline\Omega)}\le 1,
	\]
	and thus in particular the seminorms $[w^\Omega_{p,q}]_{C^{0,\alpha_p}(\overline\Omega)}$ are uniformly bounded, for $p$ large enough. We also observe that by Lemma \ref{lm:distholder}, we have
	\[
	[d_\Omega]_{C^{0,\alpha_p}(\overline\Omega)}\le (2\,r_\Omega)^{1-\alpha_p}.
	\]
We now apply Lemma \ref{lm:interpol_holder} to $w^\Omega_{p,q}-d_\Omega$, with $\alpha=\alpha_p$ and $\gamma=q$. Thus, for every $0<\beta<1$ and every $p$ such that $\alpha_p>\beta$, we have 
\[
[w^\Omega_{p,q}-d_\Omega]_{C^{0,\beta}(\overline\Omega)}\le C_1\,\|w^\Omega_{p,q}-d_\Omega\|_{L^q(\Omega)}^{\theta_p}\,[w^\Omega_{p,q}-d_\Omega]^{1-\theta_p}_{C^{0,\alpha_p}(\overline\Omega)},\qquad \mbox{ with } \theta_p=\frac{\alpha_p-\beta}{\alpha_p+\dfrac{N}{q}},
\]
and
\[
\|w^\Omega_{p,q}-d_\Omega\|_{L^\infty(\Omega)}\le C_2\,\|w^\Omega_{p,q}-d_\Omega\|_{L^q(\Omega)}^{\chi_p}\,[w^\Omega_{p,q}-d_\Omega]^{1-\chi_p}_{C^{0,\alpha_p}(\overline\Omega)},\qquad \mbox{ with } \chi_p=\frac{\alpha_p}{\alpha_p+\dfrac{N}{q}}.
\]
We observe that, for $p$ diverging to $\infty$, the exponent $\alpha_p$ goes to $1$. Thus,  the constants $C_1$ and $C_2$, which depend on $p$ through $\alpha_p$, stay uniformly bounded as $p$ goes to $\infty$ (see Lemma \ref{lm:interpol_holder} and Remark \ref{rem:constantvaries}). By using this fact, the bound on the $C^{0,\alpha_p}$ seminorms inferred above and the convergence in $L^q$ proved in {\it Part 1}, the previous interpolation estimates give
\[
\lim_{p\to\infty} \left(\|w^\Omega_{p,q}-d_\Omega\|_{L^\infty(\overline\Omega)}+[w^\Omega_{p,q}-d_\Omega]_{C^{0,\beta}(\overline\Omega)}\right)=0.
\]
Finally, the convergence in $L^r(\Omega)$ for $q<r<\infty$ can be obtained by interpolation in Lebesgue spaces.
	\end{proof}

\subsection{Asymptotics for $\lambda_{p}(\Omega)$}
The following corollary generalizes the result shown, independently, in \cite[Theorem 3.1]{FIN} and \cite[Lemma 1.2]{JLM}. While these treat the case of {\it bounded} open sets, we enlarge the result to cover {\it every} open set, without further restrictions.
\begin{cor}\label{teo:asymp-p}
	Let $\Omega\subsetneq\mathbb{R}^N$ be an open set, then
	\begin{equation}\label{teo:limitepp} 
		\lim_{p \to \infty} \Big( \lambda_{p}(\Omega) \Big)^\frac{1}{p} =  \frac{1}{r_\Omega},
	\end{equation}	
	where the right-hand side has to be considered $0$, if $r_\Omega=+\infty$.
\end{cor}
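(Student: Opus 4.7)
The plan is to derive the asymptotic from the two-sided geometric estimate already established in Theorem \ref{teo:q=p}, together with the known asymptotics for $\lambda_p(B_1)$ and for the sharp Hardy constant $\mathfrak{h}_p(\Omega)$.

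I would begin with the case $r_\Omega=+\infty$, where I must show that $\lambda_p(\Omega)^{1/p}\to 0$. For every $R>0$ there exists a ball $B_R(x_0)\subseteq\Omega$, so by monotonicity of $\lambda_p$ with respect to set inclusion and the standard scaling $\lambda_p(B_R(x_0))=\lambda_p(B_1)\,R^{-p}$, I get
\[
\Big(\lambda_p(\Omega)\Big)^{1/p}\le \frac{\Big(\lambda_p(B_1)\Big)^{1/p}}{R}.
\]
Sending $p\to\infty$ and using the classical fact $(\lambda_p(B_1))^{1/p}\to 1$ (\cite[Lemma 1.5]{JLM}) yields $\limsup_p(\lambda_p(\Omega))^{1/p}\le 1/R$. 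The arbitrariness of $R$ produces the claimed limit $0$.

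Next I would treat the case $r_\Omega<+\infty$. For the upper bound, I pick a sequence of balls $B_{r_n}(x_n)\subseteq\Omega$ with $r_n\to r_\Omega$; monotonicity and scaling (as in the proof of Theorem \ref{teo:q=p} (i)) give the inequality \eqref{eq:pqstima1p}, namely $\lambda_p(\Omega)\,r_\Omega^{\,p}\le \lambda_p(B_1)$. Taking the $p$-th root and letting $p\to\infty$ yields
\[
\limsup_{p\to\infty}\Big(\lambda_p(\Omega)\Big)^{1/p}\le \frac{1}{r_\Omega}.
\]
For the matching lower bound I invoke the Hardy-based estimate \eqref{HP1} from Theorem \ref{teo:q=p} (ii): for every $p>N$,
\[
\mathfrak{h}_p(\Omega)\le \lambda_p(\Omega)\,r_\Omega^{\,p}.
\]
Taking the $p$-th root gives $(\lambda_p(\Omega))^{1/p}\ge (\mathfrak{h}_p(\Omega))^{1/p}/r_\Omega$, and the asymptotic behaviour \eqref{hardyinfty} of the sharp Hardy constant, namely $(\mathfrak{h}_p(\Omega))^{1/p}\to 1$, yields $\liminf_p(\lambda_p(\Omega))^{1/p}\ge 1/r_\Omega$. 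Combining the two inequalities gives \eqref{teo:limitepp}.

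There is no genuine obstacle here, since the two key inputs—the geometric estimates of Theorem \ref{teo:q=p} and the asymptotics \eqref{hardyinfty} of $\mathfrak{h}_p$—have already been proved. The only subtlety worth noting is that the Hardy-based lower bound requires $p>N$, which is harmless when passing to the limit $p\to\infty$, and that the upper bound does \emph{not} require $p>N$, so the full statement holds for all $1<p<\infty$ and arbitrary open $\Omega\subsetneq\mathbb{R}^N$.
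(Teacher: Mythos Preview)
Your proof is correct and follows essentially the same approach as the paper: the upper bound comes from monotonicity, scaling, and the asymptotic $(\lambda_p(B_1))^{1/p}\to 1$ from \cite{JLM}, while the lower bound (for $r_\Omega<+\infty$) comes from the Hardy-based estimate \eqref{HP1} together with \eqref{hardyinfty}. The only cosmetic difference is that the paper treats the upper bound uniformly by taking $0<r<r_\Omega$ and sending $r\to r_\Omega$, rather than splitting into the cases $r_\Omega=+\infty$ and $r_\Omega<+\infty$.
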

\begin{proof} 
	First of all, we note that for every $0<r<r_{\Omega}$  there exists a ball $B_r(x_r)\subseteq\Omega$. Hence, by applying \cite[Lemma 1.5]{JLM}, it holds 
	\begin{equation}\label{liminfpp} 
		\limsup_{p \to \infty} \Big( \lambda_{p}(\Omega) \Big)^\frac{1}{p} \le  \limsup_{p \to \infty} \Big( \lambda_{p}(B_r(x_r)) \Big)^\frac{1}{p} =\frac{1}{r}. 	
	\end{equation}
	By sending $r \to r_{\Omega}$, we get 
	\[
	\limsup_{p \to \infty} \Big( \lambda_{p}(\Omega) \Big)^\frac{1}{p} \le \frac 1 {r_{\Omega}},
	\]
	where the right-hand side is $0$ when $r_{\Omega}=+\infty$.
	In order to obtain the reverse inequality when $r_{\Omega}<+\infty$, it is sufficient to apply \eqref{HP1} and \eqref{hardyinfty}, to get that
	\[ 
	\liminf_{p \to \infty} \Big( \lambda_{p}(\Omega) \Big)^\frac{1}{p} \ge \frac{1}{r_{\Omega}} \, \lim_{p \to \infty}\Big(\mathfrak{h}_p(\Omega)\Big)^{\frac{1}{p}} = \frac{1}{r_{\Omega}}. 
	\]
	This concludes the proof.
\end{proof}

\subsection{Asymptotics for the first $p-$eigenfunction}
We first recall that for every function $u \in W^{1,\infty}(\Omega)$ vanishing on the boundary $\partial\Omega$, we have 
\[ 
|u(x)| \le d_{\Omega}(x) \, \|\nabla u\|_{L^{\infty}(\Omega)}, \quad \mbox{ for every } x \in \Omega. 
\]
This may be seen as a limit case of Hardy's inequality.
In particular $\pm \,d_{\Omega}/r_{\Omega}$ is a solution of the following minimization problem
\begin{equation}\label{mininfty}
	\min_{u \in W^{1,\infty}(\Omega)} \Big\{\|\nabla u \|_{L^{\infty}(\Omega)} :\, \| u \|_{L^{\infty}(\Omega)}=1, \, u \equiv 0 \mbox{ on } \partial \Omega \Big\}=\frac{1} {r_{\Omega}},
\end{equation}
provided $\Omega$ has finite inradius.
\begin{thm}\label{teo:asymptup}
	Let $\Omega\subsetneq \mathbb{R}^N$ be an open connected quasibounded set. Then, for every $N<p<\infty$, there exists a unique positive solution   $u_p$  of the  problem
	\begin{equation}\label{eq:minprobllambdap}
	\lambda_p(\Omega)=\min_{u\in W^{1,p}_0(\Omega)}\left\{\int_\Omega |\nabla u|^p\,dx\, :\, \int_\Omega |u|^p\,dx=1\right\}.
	\end{equation}
	
	Moreover,  the family $\{ u_p\}_{p>N}$ is precompact in $C^{0,\beta}(\overline\Omega)$ for every $0<\beta<1$ and  every accumulation point $u_{\infty}$ is a solution of \eqref{mininfty}, possibly different from $d_{\Omega}/r_{\Omega}$. 
\end{thm}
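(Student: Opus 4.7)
The plan is to split the proof into four natural steps: existence/uniqueness of $u_p$, uniform H\"older bounds, extraction of an accumulation point, and identification of the limit as a minimizer of \eqref{mininfty}.

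First, existence and uniqueness of $u_p$. Since $\Omega$ is quasibounded and $p>N$, Corollary \ref{coro:endpoint} applied with $q=p$ yields a minimizer of \eqref{eq:minprobllambdap} in $W^{1,p}_0(\Omega)$, which may be taken positive by replacing it with its absolute value. Uniqueness of the positive normalized first Dirichlet eigenfunction of $-\Delta_p$ on a connected domain is classical and provable via the Picone / Belloni--Kawohl convexity argument; it applies in our setting through the identification $W^{1,p}_0(\Omega)=\mathcal{D}^{1,p}_0(\Omega)$ ensured by Remark \ref{oss:ugualip}.

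Next, I would derive uniform H\"older bounds. From $\|\nabla u_p\|_{L^p(\Omega)}^p = \lambda_p(\Omega)$ and Corollary \ref{cor:morrey} one gets
\[
[u_p]_{C^{0,\alpha_p}(\overline\Omega)} \le \mathfrak{m}_p(\Omega)^{-1/p}\,\lambda_p(\Omega)^{1/p},
\]
and the right-hand side converges to $1/r_\Omega$ by Corollary \ref{cor:morrey} and Corollary \ref{teo:asymp-p}. Since $u_p$ vanishes on $\partial\Omega$, the pointwise bound $u_p(x)\le [u_p]_{C^{0,\alpha_p}(\overline\Omega)}\, d_\Omega(x)^{\alpha_p}$ together with $d_\Omega\le r_\Omega$ produces $\limsup_{p\to\infty}\|u_p\|_{L^\infty(\Omega)}\le 1$ and a uniform modulus of continuity of the form $C\,\delta^{\alpha_p}$. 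Proposition \ref{prop:AscArz} then extracts a subsequence $u_{p_n}\to u_\infty$ in $L^\infty(\Omega)$, with $u_\infty\in C_{\rm bound}(\overline\Omega)$ vanishing on $\partial\Omega$. Applying Lemma \ref{lm:interpol_holder} with $\gamma=\infty$ to the difference $u_{p_n}-u_\infty$, using $\alpha_{p_n}\to 1$ and Remark \ref{rem:constantvaries} to keep the interpolation constant bounded, upgrades this convergence to $C^{0,\beta}(\overline\Omega)$ for every $0<\beta<1$.

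To identify $u_\infty$ as a minimizer of \eqref{mininfty}, the gradient bound $\|\nabla u_\infty\|_{L^\infty(\Omega)}\le 1/r_\Omega$ follows from a standard argument on bounded subsets: for $\Omega'\Subset\Omega$ and $N<q<p$, H\"older gives $\|\nabla u_p\|_{L^q(\Omega')}\le |\Omega'|^{1/q-1/p}\,\lambda_p(\Omega)^{1/p}$; passing to the weak limit in $W^{1,q}(\Omega')$, then letting $q\to\infty$ and $\Omega'\nearrow\Omega$, yields the claim by Corollary \ref{teo:asymp-p}. The upper bound $\|u_\infty\|_\infty\le 1$ is immediate from the Morrey estimate of Step~2. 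What remains is the identity $\|u_\infty\|_{L^\infty(\Omega)}=1$, which is sufficient since $d_\Omega/r_\Omega$ realizes the minimum value $1/r_\Omega$ in \eqref{mininfty}.

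The hard part will be the lower bound $\liminf_{p\to\infty}\|u_p\|_\infty\ge 1$: when $|\Omega|=+\infty$, the usual trick $\|u_p\|_\infty\ge |\Omega|^{-1/p}$ breaks down. The key idea is to localize the $L^p$ mass via Hardy's inequality (Theorem \ref{teo:hardyineq}). Setting $\delta_R:=\|d_\Omega\|_{L^\infty(\Omega\setminus B_R)}$, which vanishes as $R\to\infty$ by quasiboundedness, one has
\[
\int_{\Omega\setminus B_R} u_p^p\, dx \le \delta_R^{\,p}\int_\Omega \frac{u_p^p}{d_\Omega^p}\, dx \le \frac{\delta_R^{\,p}\,\lambda_p(\Omega)}{\mathfrak{h}_p(\Omega)}.
\]
Taking $p$-th roots and invoking \eqref{hardyinfty} together with Corollary \ref{teo:asymp-p}, the right-hand side is asymptotic to $\delta_R/r_\Omega$ as $p\to\infty$. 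Choosing $R$ large enough that $\delta_R<r_\Omega/2$ forces $\int_{\Omega\setminus B_R} u_p^p\to 0$, and hence $\int_{\Omega\cap B_R} u_p^p\to 1$. H\"older on the bounded set $\Omega\cap B_R$ then gives $\|u_p\|_\infty^p\ge (1-o(1))/|\Omega\cap B_R|$, whence $\liminf_p \|u_p\|_\infty\ge 1$. The $L^\infty$ convergence finally yields $\|u_\infty\|_\infty=1$, completing the identification of $u_\infty$ as a solution of \eqref{mininfty}.
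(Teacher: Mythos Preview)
Your proof is correct and follows essentially the same architecture as the paper's: existence/uniqueness via compactness and convexity, uniform H\"older bounds from Morrey, extraction via the generalized Ascoli--Arzel\`a (Proposition \ref{prop:AscArz}), and the crucial lower bound $\|u_\infty\|_{L^\infty}\ge 1$ via a Hardy tail estimate exploiting quasiboundedness. Two minor differences are worth noting. First, your Hardy step is cleaner than the paper's: you apply Hardy's inequality directly to $u_p$ and restrict the integration domain, obtaining
\[
\int_{\Omega\setminus B_R} u_p^p\,dx \le \delta_R^{\,p}\int_{\Omega}\frac{u_p^p}{d_\Omega^p}\,dx \le \frac{\delta_R^{\,p}\,\lambda_p(\Omega)}{\mathfrak{h}_p(\Omega)},
\]
whereas the paper introduces a cutoff $\eta_R$, applies Hardy to $u_{p_n}\eta_R$, and must separately bound $\|\nabla(u_{p_n}\eta_R)\|_{L^{p_n}}$; your route avoids this detour entirely. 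Second, you obtain $\|\nabla u_\infty\|_{L^\infty}\le 1/r_\Omega$ through weak $W^{1,q}$ convergence on bounded subdomains, while the paper reads it off directly from the pointwise Morrey estimate \eqref{eq:limupn} in the limit---both are standard, but the paper's version is slightly shorter and also yields the pointwise bound $|u_\infty|\le d_\Omega/r_\Omega$ for free. One small ordering remark: to interpolate on $u_{p_n}-u_\infty$ via Lemma \ref{lm:interpol_holder} you need a uniform bound on $[u_\infty]_{C^{0,\alpha_{p_n}}}$, which follows once you know $u_\infty$ is Lipschitz; it is therefore cleanest to establish the Lipschitz bound on $u_\infty$ \emph{before} upgrading the $L^\infty$ convergence to $C^{0,\beta}$, as the paper does.
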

\begin{proof}
We first observe that $\lambda_p(\Omega)>0$, thanks to the assumption on $\Omega$. Thus, by Remark \ref{oss:ugualip},  we have
	\[
	W^{1,p}_0(\Omega)=\mathcal{D}^{1,p}_0(\Omega).
	\]
	By Theorem \ref{teo:compact},  for every $p>N$ the embedding $W^{1,p}_0(\Omega)\hookrightarrow L^p(\Omega)$ is compact. Hence, by using also Lemma \ref{lm:inf}, it follows that, for every $p>N$,  there exists a positive solution $u_{p}\in W^{1,p}_0(\Omega)$ of  the minimization problem \eqref{eq:minprobllambdap}. Uniqueness can now be inferred by using the {\it Benguria hidden convexity principle} of \cite{BK, KLP}, for example, as generalized in \cite[Theorem 2.9]{BPZ}. See also \cite{AH} and \cite{Lin} for other proofs of the uniqueness.
	\par
	Without loss of generality, let $\{p_n\}_{n \in \mathbb{N}}$ be an increasing sequence diverging at $\infty$. In particular, there exists $n_0\in\mathbb{N}$ such that $p_n>N$ for every $n\ge n_0$. We denote by $u_{p_n}$ the unique positive solution of the problem defining $\lambda_{p_n}(\Omega)$. By applying \eqref{eq:morreylemma4.1} and \eqref{eq:morrey1*},  we find that
	\begin{equation}\label{eq:upn} 
		|u_{p_n}(x)|\le \frac{1}{\Big(\mu_{p_n}(B_1)\Big)^{\frac{1}{ p_n}}}\Big(\lambda_{p_n}(\Omega)\Big)^{\frac 1{p_n}}   d_{\Omega}(x)^{\alpha_{p_n}},\quad \mbox{ for every } x\in \overline{\Omega},
	\end{equation}
	and
	\begin{equation}\label{eq:limupn} 
		|u_{p_n}(x)-u_{p_n}(y)| \le \frac{1}{\Big(\mu_{p_n}(B_1)\Big)^{\frac 1 {p_n}}}\Big(\lambda_{p_n}(\Omega)\Big)^{\frac 1{p_n}} |x-y|^{\alpha_{p_n}}, \quad \mbox{ for every } x,y\in  \overline{\Omega}.
	\end{equation}
	Since $\Omega$ is quasibounded, the previous estimates assures that we can apply Proposition \ref{prop:AscArz}.
	Thus, we get that $\{u_ {p_n}\}_{n\ge n_0}$ converges uniformly to a function $u_{\infty}\in C_0(\Omega)$, up to a subsequence. By passing to the limit in \eqref{eq:upn} and in \eqref{eq:limupn} as $n$ goes to $\infty$, we obtain that
	\[|u_{\infty}(x)|\le \frac{1}{r_{\Omega}}\, d_{\Omega}(x), \quad \mbox{ for every } x\in \overline \Omega,\]
	and
	\[|u_{\infty}(x)-u_{\infty}(y)| \le \dfrac{1}{r_{\Omega}} \, |x-y|, \quad \mbox{ for every } x,y\in \overline \Omega.\]
	Observe that we also used \eqref{eq:limitemup} and \eqref{teo:limitepp}.
	In particular $u_{\infty} \in W^{1,\infty}(\Omega)$ and it satisfies 
	\[ 
	\|u_\infty\|_{L^\infty(\Omega)}\le 1 \qquad \mbox{ and } \qquad \|\nabla u_\infty\|_{L^\infty(\Omega)}\le  \dfrac{1}{r_{\Omega}}.
	 \]
	If we also prove that $\|u_{\infty}\|_{L^\infty(\Omega)}\ge 1$, we can conclude that $u_{\infty}$ is a minimizer for the problem \eqref{mininfty}.
	\par	In order to show this, for every $R>0$, we take $\eta_R\in C^{\infty}(\mathbb{R}^N)$ such that
	\[0 \le \eta_R \le 1,\quad  \eta_R= 1 \mbox{  in }\mathbb{R}^N\setminus B_{R+1},\quad \eta_R= 0\mbox{  in } B_R,\quad |\nabla \eta_R | \le C, \]
	for some universal constant $C>0$.
		Then 
		\[
		\begin{split}
			\sup_{n\ge n_0} \|\nabla (u_{ p_n} \eta_R)\|_{L^{p_n}(\Omega)} &\le \sup_{n\ge n_0} \|\nabla u_{p_n}\|_{L^{p_n}(\Omega)} + C \,\sup_{n>N} \|u_{p_n}\|_{L^{p_n}(\Omega)} \\
			&=\sup_{n\ge n_0} \Big(\lambda_{p_n}(\Omega)\Big)^{\frac{1}{p_n}} +C\\
			&\le \frac{1}{r_{\Omega}} \, \sup_{n\ge n_0}  \Big(\lambda_{p_n}(B_1)\Big)^{\frac 1 {p_n}} +C=:M <+\infty.
		\end{split}
		\]
		We notice that $M>0$ only depends on $N$ and $r_{\Omega}$. Now, thanks to \eqref{hardyinfty}, there exists $\overline{p}>N$ such that 
	\[ 
	\Big(\mathfrak{h}_{p}(\Omega)\Big)^{\frac{1}{p}}\ge \frac{1}{2},\qquad \mbox{ for every } p\ge \overline{p}.
	\]
	Since $\Omega$ is quasibounded, for every $0<\varepsilon\le 1/2$, there exists $R_{\varepsilon}>0$ such that 
	\[
	\|d_{\Omega}\|_{L^{\infty}(\Omega\setminus B_{R_{\epsilon}} )} <\frac{\varepsilon}{2\,M}.
	\]
	By using the properties of $\eta$ and applying H\"older's and Hardy's inequalities to $u_{p_n} \eta_{R_{\varepsilon}}\in W^{1,p}_0(\Omega)$, for every $p_n \ge \max\{\overline{p},p_{n_0}\}$, we have that 
	\[
	\begin{split} 
		\int_{\Omega\setminus B_{R_{\varepsilon}+1}} |u_{{p_n}}|^{p_n} \, dx =\int_{\Omega \setminus B_{R_{\varepsilon}+1}} |u_{p_n} \eta_{R_{\varepsilon}}|^{p_n} \, dx &\le \|d_{\Omega}\|^{\,p_n}_{L^{\infty}(\Omega\setminus B_{R_{\varepsilon}})} \int_{\Omega} \frac{ |u_{p_n}\, \eta_{R_{\epsilon}}|^{p_n}}{d_{\Omega}^{\,p_n}}\, dx \\ 
		&\le\|d_{\Omega}\|^{\,p_n}_{L^{\infty}(\Omega\setminus B_{R_{\varepsilon}})} \, \frac{1}{\mathfrak{h}_{p_n}(\Omega)} \int_{\Omega} |\nabla (u_{p_n} \,\eta_{R_{\varepsilon}}) |^{p_n}\, dx \\
		&\le \varepsilon^{\,p_n}.
	\end{split}
	\]
	Hence \[1=\int_{\Omega\setminus B_{R_\varepsilon+1}} |u_{p_n}|^{p_n} \, dx + \int_{B_{R_\varepsilon+1}} |u_{p_n}|^{p_n} \, dx \le  \varepsilon^{\,p_n}+ \int_{B_{R_\varepsilon+1}} |u_{p_n}|^{p_n} \, dx,\]
	that is \[ \big(1-\varepsilon^{\,p_n}\big)^{\frac{1}{p_n}} \le \Big(\omega_N \,(R_\epsilon+1)^N \Big)^{\frac{1}{p_n}} \sup_{B_{R_\epsilon+1}}|u_{p_n}|, \qquad \mbox{for every } p_n \ge \max\{\overline{p},p_{n_0}\}.
	\]
	By exploiting the uniform convergence of the family $\{u_{p_n}\}_{n\ge n_0}$ to $u_{\infty}$ on compact sets,  if we take the limit as $n$ goes to $\infty$, we get
	\[ 1 \le \sup_{B_{R_{\varepsilon}+1}}|u_{\infty}|\le \sup_{\Omega}|u_{\infty}|, \]
	which proves the claim for every $\Omega$ quasibounded set.
	\par
	In order to get the convergence in $C^{0,\beta}(\overline\Omega)$ for $0<\beta<1$, we can use the same interpolation argument as in {\it Part 2} of the proof of Theorem \ref{teo:asymp}. It is sufficient to observe that it holds
	\[
	[u_\infty]_{C^{0,\beta}(\overline\Omega)}\le 2^{1-\beta}\,\|u_\infty\|^{1-\beta}_{L^\infty(\Omega)}\,\|\nabla u_\infty\|^\beta_{L^\infty(\Omega)},
	\]
	an estimate that can be proved by repeating the proof of Lemma \ref{lm:distholder}. We leave the details to the reader.
\end{proof}
 \begin{rem}  
			We underline that, differently from the case $1\le q<p$, when $p=q$ it may happen that the accumulation points of the family $\{u_p\}_{p>N}$ do not coincide with $d_{\Omega}$ (see \cite[Corollary 4.7]{FIN}). We refer to \cite[Theorem 3.14]{EP} for a study of the multiplicity of extremals for problem \eqref{mininfty}, in the case of open bounded sets.
		\end{rem}
With the same arguments as in the previous proof, we  can show a similar result for minimizers of $\lambda_{p,\infty}(\Omega)$, whose existence is given by Corollary \ref{coro:endpoint}  when $\Omega$ is quasibounded. This generalizes   \cite[Theorem 3.3]{EP}. We omit the proof.
\begin{thm}\label{teo:asympwpinfty}
	Let $N< p $ and let $\Omega \subsetneq \mathbb{R}^N$ be a quasibounded open set. Let $u_{p,\infty} \in W^{1,p}_0(\Omega)$ be a positive solution of 
	\[ 	\lambda_{p,\infty}(\Omega)=\min_{u\in W^{1,p}_0(\Omega)} \left\{\int_\Omega |\nabla u|^p\,dx\, :\, \|u\|_{L^\infty(\Omega)}=1\right\}. \]
	Then the family $\{ u_{p,\infty}\}_{p>N}$ is precompact in $C^{0,\beta}(\overline\Omega)$ for every $0<\beta<1$ and every accumulation point $u_{\infty}$ is a solution of \eqref{mininfty}.	
\end{thm}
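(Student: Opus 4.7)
The argument closely parallels the proof of Theorem \ref{teo:asymptup}, with substantial simplifications afforded by the fact that the normalization is already in $L^\infty$. First, by minimality and the constraint $\|u_{p,\infty}\|_{L^\infty(\Omega)}=1$, we have $\|\nabla u_{p,\infty}\|_{L^p(\Omega)}^p = \lambda_{p,\infty}(\Omega)$. Applying the Morrey--type bounds \eqref{eq:morreylemma4.1} and \eqref{eq:morrey1*} to $u_{p,\infty}$ yields, for every $p>N$ and every $x,y\in\overline\Omega$,
\[
|u_{p,\infty}(x)| \le \frac{(\lambda_{p,\infty}(\Omega))^{1/p}}{(\mu_p(B_1))^{1/p}}\, d_\Omega(x)^{\alpha_p},\qquad |u_{p,\infty}(x)-u_{p,\infty}(y)| \le \frac{(\lambda_{p,\infty}(\Omega))^{1/p}}{(\mu_p(B_1))^{1/p}}\, |x-y|^{\alpha_p}.
\]
By Corollary \ref{teo:limite} and Lemma \ref{lm:morreyA}, the prefactor converges to $1/r_\Omega$ as $p\to\infty$, and in particular stays uniformly bounded for $p$ large.

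Since $\Omega$ is quasibounded, $\|u_{p,\infty}\|_{L^\infty(\Omega)}=1$, and since for $p\ge 2N$ we have $\alpha_p\ge 1/2$ so that the H\"older estimate above provides a uniform modulus of continuity of the form $\omega(\delta)=C\,\delta^{1/2}$ on $(0,1]$, we may apply Proposition \ref{prop:AscArz} along any subsequence $p_n\to\infty$. This produces, up to a further subsequence, a function $u_\infty\in C_{\rm bound}(\overline\Omega)$ vanishing on $\partial\Omega$ such that $u_{p_n,\infty}\to u_\infty$ uniformly on $\overline\Omega$.

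Uniform convergence on $\overline\Omega$ preserves the sup norm, so $\|u_\infty\|_{L^\infty(\Omega)}=1$; notice that this step, which in Theorem \ref{teo:asymptup} required the delicate Hardy--based argument to rule out loss of mass at infinity, is here essentially automatic thanks to the $L^\infty$ normalization. Passing to the limit in the H\"older estimate and using $(\lambda_{p,\infty}(\Omega))^{1/p}\to 1/r_\Omega$ together with $(\mu_p(B_1))^{1/p}\to 1$ yields
\[
|u_\infty(x)-u_\infty(y)|\le \frac{1}{r_\Omega}\,|x-y|,\qquad \mbox{ for every } x,y\in\overline\Omega,
\]
so that $u_\infty\in W^{1,\infty}(\Omega)$ with $\|\nabla u_\infty\|_{L^\infty(\Omega)}\le 1/r_\Omega$. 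Combined with $\|u_\infty\|_{L^\infty(\Omega)}=1$, $u_\infty\equiv 0$ on $\partial\Omega$, and the identity in \eqref{mininfty}, this forces $u_\infty$ to be a minimizer.

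To upgrade to convergence in $C^{0,\beta}(\overline\Omega)$ for any fixed $0<\beta<1$, we repeat the interpolation argument used in Part~2 of the proof of Theorem \ref{teo:asymp}: applying Lemma \ref{lm:interpol_holder} with $\gamma=\infty$, $\alpha=\alpha_p$ to $u_{p_n,\infty}-u_\infty$ gives, for all $p_n$ with $\alpha_{p_n}>\beta$,
\[
[u_{p_n,\infty}-u_\infty]_{C^{0,\beta}(\overline\Omega)}\le C_1\,\|u_{p_n,\infty}-u_\infty\|_{L^\infty(\Omega)}^{\theta_{p_n}}\,[u_{p_n,\infty}-u_\infty]_{C^{0,\alpha_{p_n}}(\overline\Omega)}^{1-\theta_{p_n}},
\]
with $\theta_{p_n}=(\alpha_{p_n}-\beta)/\alpha_{p_n}\to 1-\beta>0$. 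The seminorms $[u_{p_n,\infty}]_{C^{0,\alpha_{p_n}}(\overline\Omega)}$ are uniformly bounded by the Morrey estimate, while $[u_\infty]_{C^{0,\alpha_{p_n}}(\overline\Omega)}$ stays uniformly bounded by a Lemma \ref{lm:distholder}--type computation for the Lipschitz function $u_\infty$; the constant $C_1$ remains bounded as $\alpha_p\nearrow 1$ by Remark \ref{rem:constantvaries}. Combined with the already established uniform convergence, this yields $[u_{p_n,\infty}-u_\infty]_{C^{0,\beta}(\overline\Omega)}\to 0$, completing the proof. The only mild technical point in the whole plan is the verification of a uniform modulus of continuity in order to apply Proposition \ref{prop:AscArz}, but this is painless once one uses $\alpha_p\ge 1/2$ for large $p$.
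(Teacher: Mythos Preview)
Your proposal is correct and follows exactly the approach the paper intends: the paper itself omits the proof, saying only that it proceeds ``with the same arguments as in the previous proof'' of Theorem \ref{teo:asymptup}, and your argument does precisely this. Your observation that the $L^\infty$ normalization makes the step $\|u_\infty\|_{L^\infty(\Omega)}=1$ automatic---bypassing the Hardy--based cutoff argument needed in Theorem \ref{teo:asymptup} to rule out mass escaping to infinity---is the main simplification and is spot on.
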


	\appendix
	
	\section{An infinite strip with slowly shrinking ends}
	\label{sec:A}
	
	In the next example, we consider a quasibounded open set for which $d^\gamma_\Omega\notin L^1(\mathbb{R}^N)$, for any $0<\gamma<\infty$. 
	
	\begin{ex}\label{esempio}
	 For every $\alpha>0$ and $x_1 \in \mathbb{R}$, we set 
		\[
	f_1(x_1)=\frac{1}{\log \left(2+x_1^2\right)}\qquad \mbox{ and }\qquad	f_\alpha(x_1)=f_1\left(\frac{x_1}{\alpha}\right)=\frac{1}{\log \left(2+\left(\dfrac{x_1}{\alpha}\right)^2\right)}.
		\] 
	Then we consider the quasibounded open set
		\[
		\Omega_\alpha=\Big\{x=(x_1,x_2)\in\mathbb{R}^2\, :\, x_1\in\mathbb{R},\, |x_2|< f_\alpha(x_1) \Big\}, \qquad \mbox{ with } \alpha^2 > (\log 2)^{-3}.
		\]
		Observe that for this set we have $d_\Omega^\gamma\not \in L^1(\Omega)$, for any $0<\gamma<\infty$. Thus, by Theorem \ref{teo:q<p} part (i), we have
		\[
		\mathcal{D}^{1,2}_0(\Omega_\alpha)\not\hookrightarrow L^q(\Omega_\alpha),\qquad \mbox{ for every } 1\le q<2.
		\]
		On the other hand, since  $\Omega_\alpha$ is bounded in the $x_2$ direction, we easily get that $\lambda_{2}(\Omega_\alpha)>0$,
		that is 
		\[
		\mathcal{D}^{1,2}_0(\Omega_\alpha)\hookrightarrow L^2(\Omega_\alpha).
		\]
	As for the compactness of this embedding, we observe that this can not be directly inferred from Theorem \ref{teo:q=p}, since we are in the critical situation $p=2=N$. Nevertheless, we are going to show that actually such an embedding is compact, thanks to the particular geometry of the set $\Omega_{	\alpha}$. In particular, the Dirichlet-Laplacian on $\Omega_\alpha$ has a discrete spectrum.
\par
We define 
		\[
		\Omega_{\alpha,R} := \Omega_{\alpha} \cap \Big((-R, R)\times (-R,R)\Big),\qquad \mbox{ for } R\ge R_0=\frac{1}{\log 2}.
		\] 
		We denote by $w_{\Omega_\alpha}$ the torsion function of $\Omega_\alpha$, defined as 
		\[ w_{\Omega_\alpha} := \lim_{R \to \infty} w_{\Omega_{\alpha,R}}, \]
		where $w_{\Omega_{\alpha,R}} \in W^{1,2}_0(\Omega_{\alpha,R})$ is the {\it torsion function} of $\Omega_{\alpha,R}$, i.\,e. it solves
		\begin{equation}\label{eq:torsione}
			-\Delta u = 1, \quad \mbox{ in } \Omega_{\alpha,R},
		\end{equation}
		(see \cite[Definition 2.2]{BR}).
		In order to prove the compactness of the embedding of $\mathcal{D}^{1,2}_0(\Omega_\alpha)\hookrightarrow L^2(\Omega_\alpha)$, it is sufficient to prove that
		\begin{equation}\label{eq:claim} 
			\lim_{R \to \infty} \|w_{\Omega_{\alpha}}\|_{L^\infty (\Omega\setminus B_R)} = 0,
		\end{equation}
		thanks to \cite[Theorem 1.3]{BR}. We will achieve  \eqref{eq:claim} by exploiting the geometry of $\Omega_\alpha$ in order to construct a suitable upper barrier.
		 For every $\alpha>0$ and $x_1 \in \mathbb{R}$, we set 
		\[
	F_1(x_1)=(f_1(x_1))^2\qquad \mbox{ and }\qquad	F_{\alpha}(x_1) := F_1\left(\frac{x_1}{\alpha}\right)= (f_{\alpha}(x_1))^2.
		\]
		Observe that
		\[ 
		\left|F_1''\left(\frac{x_1}{\alpha}\right)\right| \le 2 \,(\log 2)^{-3},
		\]
	thus, if we take $\alpha>0$ such that $\alpha^2 > (\log 2)^{-3}$, we obtain
				\begin{equation}\label{derivsec} 
			|F_{\alpha}''(x_1)| = \frac{1}{\alpha^2}\,\left|F_1''\left(\frac{x_1}{\alpha}\right)\right| \le 2\,(1-C), \qquad \mbox{ for } x_1\in \mathbb{R},
		\end{equation}
		for some $C=C(\alpha)\in (0,1)$. With such a choice of $\alpha$, we consider the function
		\[ 
		U_\alpha(x_1,x_2)=\frac{F_\alpha(x_1)-x_2^2}{2\,C}, \quad \mbox{ for every } (x_1,x_2) \in \Omega_{\alpha}.
		\]
		We claim that this is the desired upper barrier. Indeed, by construction we have $U_{\alpha} \ge 0$ and, thanks to \eqref{derivsec}, it holds
		\[ 
		-\Delta U_\alpha (x_1,x_2)= \frac{1}{C}-\frac{F''_\alpha(x_1)}{2\,C} \ge 1 , \quad \mbox{ for every } (x_1,x_2) \in \Omega_{\alpha}. 
		\]
	By applying the Comparison Principle in every $\Omega_{\alpha,R}$  
	we get that
		\[ 
		w_{\Omega_{\alpha,R}}(x) \le U_\alpha(x) \le \frac{F_\alpha(x_1)}{2\,C}, \quad \mbox{ for every } x=(x_1,x_2) \in \Omega_{\alpha,R}, 
		\]
		and such an estimate does not depend on $R$. Hence, by sending $R$ to $\infty$, we have that 
		\[ 
		w_{\Omega_\alpha}(x) \le \frac{F_\alpha(x_1)}{2\,C}, \quad \mbox{ for every } x=(x_1,x_2) \in \Omega_\alpha.
		\]
		By using the properties of $F_\alpha=(f_\alpha)^2$ and the previous estimate, we eventually get \eqref{eq:claim}.  
	\end{ex}


\begin{thebibliography}{100}
		\bibitem{AF} R. A. Adams, J. J. F. Fournier,
{\it Sobolev spaces.
Second edition}. Pure and Applied Mathematics (Amsterdam), {\bf 140}. Elsevier/Academic Press, Amsterdam, 2003.
		
		\bibitem{A68}R. A. Adams, Compact Sobolev imbeddings for unbounded domains with discrete boundaries,
		J. Math. Anal. Appl., {\bf 24} (1968), 326--333.
		
		\bibitem{AH} W. Allegretto, Y. X. Huang, A Picone's identity for the $p-$Laplacian and applications, Nonlinear Anal., {\bf 32} (1998), 819--830.


		\bibitem{Av} F. G. Avkhadiev, Hardy type inequalities in higher dimensions with explicit estimate of constants, Lobachevskii J. Math., {\bf 21} (2006), 3--31.
		
		\bibitem{BdBM} T. Bhattacharya, E. Di Benedetto, J. Manfredi, Limits as $p\to\infty$ of $\Delta_pu_p=f$ and related extremal problems, Some topics in nonlinear PDEs (Turin, 1989). Rend. Sem. Mat. Univ. Politec. Torino 1989, Special Issue, 15–68 (1991).
		
\bibitem{BK} M. Belloni, B. Kawohl, A direct uniqueness proof for equations involving the $p-$Laplace operator, Manuscripta Math., {\bf 109} (2002), 229--231.

\bibitem{vBe} M. van den Berg, Estimates for the torsion function and Sobolev constants, Potential Anal., {\bf 36} (2012), 607--616.

		\bibitem{Bra1}  L. Brasco, On principal frequencies and isoperimetric ratios in convex sets, Ann. Fac. Sci. Toulouse Math. (6), {\bf 29} (2020), 977--1005. 
		
		
		\bibitem{BFR} L. Brasco, G. Franzina, B. Ruffini, Schr\"odinger operators with negative potentials and Lane-Emden densities, J. Funct. Anal., {\bf 274} (2018), 1825--1863. 
	
		
		\bibitem{BPZ} L. Brasco, F. Prinari, A. C. Zagati, A comparison principle for the Lane–Emden equation and applications to geometric estimates, Nonlinear Analysis, {\bf 220} (2022), 112847
		
		\bibitem{BR} L. Brasco, B. Ruffini, Compact Sobolev embeddings and torsion functions, Ann. Inst. Henri Poincaré, Anal. Non Linéaire, \textbf{34} (2017), 817--843.
				
		\bibitem{BBP} L. Briani, G. Buttazzo, F. Prinari, Some inequalities involving perimeter and torsional rigidity, Appl. Math. Optim., {\bf 84} (2021), 2727--2741.
		
		\bibitem{BB} 
D. Bucur, G. Buttazzo,  On the characterization of the compact embedding of Sobolev spaces, Calc. Var. Partial Differential Equations, {\bf 44} (2012), 455--475.
				
		\bibitem{Cl} C. Clark, An embedding theorem for function spaces, Pacific J. Math., {\bf 19} (1965), 243--251.
		
		\bibitem{Ch} B.-Y. Chen, Hardy-type inequalities and principal frequency of the $p-$Laplacian, preprint (2020), available at {\tt arXiv:2007.06782v1}.
		
		\bibitem{CDPJ} T. Champion, L.  De Pascale, C.  Jimenez,  The $\infty$-eigenvalue problem and a problem of a optimal transportation. Commun. Appl. Anal. 13 {\bf 4} (2009), 547--565.
		

	\bibitem{EP} G. Ercole, G. Pereira, Asymptotics for the best Sobolev constants and their extremal functions, Math. Nachr., {\bf 289} (2016), 1433--1449 .
		
		\bibitem{FIN} N. Fukagai,  M. Ito, K. Narukawa, Limit as $p\to \infty$ of $p-$Laplace eigenvalue problems and $L^{\infty}-$inequality of the Poincar\'e type,  Differential Integral Equations, {\bf 12} (1999), 183 -- 206.
		
		\bibitem{He} J. Hersch, Sur la fr\'equence fondamentale d'une membrane vibrante: \'evaluations par d\'efaut et principe de maximum, Z. Angew. Math. Phys., {\bf 11} (1960), 387--413. 
		
		\bibitem{Ha} P. Hajlasz, Pointwise Hardy inequalities, Proc. Amer. Math. Soc., {\bf 127} (1999), 417-- 423.
		
		\bibitem{HeSt} E. Hewitt, K. Stromberg, {\it Real and Abstract Analysis. A modern treatment of the theory of functions of one real variable}. Springer-Verlag, New York, 1965.
		
		\bibitem{HL} R. Hynd, E. Lindgren, Extremal functions for Morrey's inequality in convex domains, Math. Ann., {\bf 375} (2019), 1721--1743.
		
		\bibitem{HS1} R. Hynd, F. Seuffert, Extremal functions for Morrey's inequality, Arch. Ration. Mech. Anal., {\bf 241} (2021), 903--945.
	
		\bibitem{HS2} R. Hynd, F. Seuffert, Asymptotic flatness of Morrey extremals, Calc. Var. Partial Differential Equations, {\bf 59} (2020), Paper No. 159, 24 pp.
	
	\bibitem{HS3} R. Hynd, F. Seuffert, On the symmetry and monotonicity of Morrey extremals, Commun. Pure Appl. Anal., {\bf 19} (2020), 5285--5303.
		
		\bibitem{JLM} P. Juutinen, P. Lindqvist, J. J. Manfredi, The $\infty-$eigenvalue problem, Arch. Rational Mech. Anal., {\bf 148} (1999), 89--105.	
		
		\bibitem{Ka1} R. Kajikiya, A priori estimate for the first eigenvalue of the $p-$Laplacian, Differential Integral Equations, {\bf 28} (2015), 1011--1028.
		
\bibitem{Ka} B. Kawohl, On a family of torsional creep problems, J. Reine Angew. Math., {\bf 410} (1990), 1--22.

\bibitem{KLP} B. Kawohl, M. Lucia, S. Prashanth, Simplicity of the principal eigenvalue for indefinite quasilinear problems, 
Adv. Differential Equations, {\bf 12} (2007), 407--434.
		
		\bibitem{Lewis} J. L. Lewis, Uniformly fat sets, Trans. Amer. Math. Soc., {\bf 308} (1988), 177--196.
	
		\bibitem{Lin} P. Lindqvist, On the equation $\mathrm{div}(|\nabla u|^{p-2}\,\nabla u)+\lambda\,|u|^{p-2}\,u=0$, Proc. Amer. Math. Soc., {\bf 109} (1990), 157--164.

		\bibitem{Maz} V. Maz'ya, {\it Sobolev spaces with applications to elliptic partial differential equations}. Second, revised and augmented edition. Grundlehren der Mathematischen Wissenschaften [Fundamental Principles of Mathematical Sciences], {\bf 342}. Springer, Heidelberg, 2011. 
		
				\bibitem{Pr} M. H. Protter, A lower bound for the fundamental frequency of a convex region, Proc. Amer. Math. Soc., {\bf 81} (1981), 65--70. 
		
		\bibitem{Po} G. Poliquin, Principal frequency of the $p-$Laplacian and the inradius of Euclidean domains, J. Topol. Anal., {\bf7} (2015), 505--511.
		
		
		\bibitem{Tal} G. Talenti, Inequalities in rearrangement invariant function spaces, in {\it Nonlinear Analysis, Function Spaces and Applications}, {\bf 5} (Prague, 1994), 177--230, Prometheus, Prague, 1994.
		
		\bibitem{T} L. Tartar, {\it An introduction to Sobolev spaces and interpolation spaces}. Lecture Notes of the Unione Matematica Italiana, {\bf 3}. Springer, Berlin; UMI, Bologna, 2007.

		
		\bibitem{TTD} N. N. Trong, B. L. T. Thanh, T. D. Do, Hardy-Lane-Emden inequalities for $p-$Laplacian on arbitrary domains, NoDEA Nonlinear Differential Equations Appl., {\bf 29} (2022), Paper No. 59, 30 pp.
		
		
		\bibitem{Wa} A. Wannebo, Hardy inequalities, Proc. Amer. Math. Soc., {\bf 109} (1990), 85--95.
		
	\end{thebibliography}
\end{document}